\numberwithin{equation}{section}
\newcommand{\us}{u_{\sigma}}
\newcommand{\weakstar}{\overset{\ast}{\rightharpoonup}}
\newcommand{\ue}{u^{\epsilon}}
\newcommand{\uex}{u(x;x_0,\epsilon)}
\newcommand{\sgn}{{\rm sgn}\,}
\newcommand\restr[2]{\ensuremath{\left.#1\right|_{#2}}}
\newcommand{\Rot}{\textrm{Rot}}
\newcommand{\hone}{\mathcal{H}^{1}}
\newcommand{\R}{\mathbb{R}}
\newcommand{\om}{\Omega}
\newcommand{\Div}{{\rm div}\,}
\newcommand{\cof}{{\rm cof}\,}
\newcommand{\adj}{{\rm adj}\,}
\newcommand{\1}{{\bf 1}}
\newcommand{\eps}{\epsilon}
\newcommand{\sca}{\mathcal{A}}
\newcommand{\dist}{\textrm{dist}\,}
\newcommand{\scl}{\mathcal{L}}
\newcommand{\im}{\mathrm{im}\,}
\newcommand{\sch}{\mathcal{H}}
\def\XXint#1#2#3{{\setbox0=\hbox{$#1{#2#3}{\int}$}
     \vcenter{\hbox{$#2#3$}}\kern-.5\wd0}}
\newtheorem{definition}{Definition}[section]
\newtheorem{lemma}[definition]{Lemma}
\newtheorem{theorem}[definition]{Theorem}
\newtheorem{proposition}[definition]{Proposition}
\newtheorem{remark}[definition]{Remark}
\title{A condition for the H\"{o}lder regularity of local minimizers of a nonlinear elastic energy in two dimensions}
\author{Jonathan J. Bevan \footnote{Department of Mathematics, University of Surrey, Guildford, Surrey,  GU2 7XH, UK.  email: j.bevan@surrey.ac.uk}}
\begin{document}
\maketitle

\begin{abstract}  We prove the local H\"{o}lder continuity of strong local minimizers of the stored energy functional \[E(u)=\int_{\om}\lambda |\nabla u|^{2}+h(\det \nabla u) \,dx\] 
subject to a condition of `positive twist'.  The latter turns out to be equivalent to requiring that $u$ maps circles to suitably star-shaped sets.  The convex function $h(s)$ grows logarithmically as $s\to 0+$, linearly as $s \to +\infty$, and satisfies $h(s)=+\infty$ if $s \leq 0$.  These properties encode a constitutive condition which ensures that material does not interpenetrate during a deformation and is one of the principal obstacles to proving the regularity of local or global minimizers.  The main innovation is to prove that if a local minimizer has positive twist a.e\frenchspacing. on a ball then an Euler-Lagrange type inequality holds and a Caccioppoli inequality can be derived from it.  The claimed H\"{o}lder continuity then follows by adapting some well-known elliptic regularity theory.   We also demonstrate the regularizing effect that the term $\int_{\om} h(\det \nabla u)\,dx$ can have by analysing the regularity of local minimizers in the class of `shear maps'.  In this setting a more easily verifiable condition than that of positive twist is imposed, with the result that local minimizers are H\"{o}lder continuous.\\[1mm] Keywords: Nonlinear elasticity, H\"{o}lder regularity
\\[1mm]
AMS 2010 subject classification: 49N60, 74G40
\end{abstract}

\section{Introduction}
\label{intro}
In this paper we consider the question of regularity of local minimizers of functionals representing the stored energy of two-dimensional elastic bodies.  Ball notes in \cite{Ba02} that this is one of a number of outstanding open problems in the field, and while we believe that the results presented here are a positive contribution towards improving the regularity of elastic energy minimizers, we do not claim to be able to prove that the latter are smooth, which in the sense of \cite[Section 2.3]{Ba02} would be the ideal outcome of such an effort.  Instead, our goal is the more modest one of improving the regularity of continuous local minimizers to H\"{o}lder continuity.

Energy functionals in nonlinear elasticity are typically of the form 
\[ E(u)=\int_{\om} W(\nabla u(x))\,dx \]
where $W$ is polyconvex, that is $W(F)$ is a convex function of $(F,\det F)$, and where 
\begin{equation}\label{blowup}W(F,\delta) \to + \infty  \ \textrm{ as } \delta \to 0+\end{equation}
for each fixed $2 \times 2$ matrix $F$ with $\det F > 0$.    The constitutive condition \eqref{blowup} models the physical reality that compressing material to zero volume ought to incur an infinite energetic cost.  That such a condition could be captured and embedded in an existence theory using polyconvex functions was first realized by Ball \cite{Ba77}, whose well-known work has since given rise to a rich literature on the topic.   Here, $\om$ is a bounded domain in $\R^2$ with Lipschitz boundary representing the region occupied by the elastic material in a reference configuration.

In circumstances where \eqref{blowup} does not hold the regularity or partial regularity of minimizers of polyconvex functionals has been studied by several authors, including but not limited to \cite{GM,FH,EM,CY,H,Gu}.   When \eqref{blowup} is imposed, or if the intention is to somehow faithfully approximate it, fewer results are available.  These can be divided according to whether full or partial regularity is proven.  The  work \cite{FS} is of the former type and focuses on planar stored-energy functions of the form
\begin{equation}\label{fs1}W(F)=g(|F|^2)+h(\det F)  \ \ \ \ \ \det F \geq 0, \end{equation}
where $g,h: [0,+\infty) \to [0,+\infty)$ are of class $C^1$.  Crucially, $g$ and $h$ are finite, meaning in particular that \eqref{blowup} cannot hold;  rather, the extended real valued problem is approximated by not specifying $h(0)$, which could therefore be as large as desired.  Under suitable growth and other assumptions, including that $u \in W^{1,2}(\om,\R^2)$ has finite energy $E(u)$ and the so-called energy-momentum equations 
\[ \Div \left((\nabla u)^{T} D_{F}W(\nabla u) - W(\nabla u) \1\right)=0 \ \ \textrm{in } \mathcal{D}'(\om)\]
hold, $u$ is shown to be H\"{o}lder continuous, with H\"{o}lder exponent depending on the parameters appearing in the growth condition.
Fuchs and Seregin point out in \cite[Remark 2]{FS} that the same result can be obtained when $h(0)=+\infty$ provided the assumptions on $u$ are extended to include $\det \nabla u > 0$ a.e\frenchspacing. in $\om$ and 
\[ \int_{\om} (\det \nabla u)^{-(1+\eps)} \,dx < +\infty\]
for some $\eps >0$.    Unfortunately, the stored-energy functionals we consider in this paper violate the growth conditions given in \cite{FS}, so that even if the condition $(\det \nabla u)^{-1} \in L^{1+\eps}(\om)$ were to hold---and there seems to be no a priori way to tell whether it does or not---the proof given in \cite{FS} does not apply.

In \cite{FS0} and \cite{FR}, the partial regularity of minimizers of functionals with integrands typified by \eqref{fs1} is proven.  Moreover, a sequence of such minimizers is used to strongly approximate in a suitable Sobolev norm a minimizer, $u$ say, of a functional whose stored-energy satisfies \eqref{blowup}.
The process does not confer partial regularity on the limiting minimizer $u$.

More recently, Foss \cite{Fo} uses a blow-up technique to establish a partial regularity result for minimizers $u$, say, of functionals in which \eqref{blowup} is operational.   For his result to hold $u$ is required to satisfy an equiintegrability condition, referred to as (REP), which is phrased in terms of an excess quantity.   Working in $W^{1,p}(\om,\R^2)$, with $p>8$ and $\om \subset \R^2$, and by slightly corrupting Foss's notation, define for $x_0 \in \om$, $R>0$ and $M \subset B(x_0, R/2)$, 
the `excess'
\begin{align*} U(x_0,R;M):= \frac{1}{|B(x_0,R)|} \int_{M} \Bigg\{ & |\nabla u - (\nabla u)_{x_0,R}|^2 + |\nabla u - (\nabla u)_{x_0,R}|^p + \\ & +  \left|\frac{1}{\det\nabla u} - \frac{1}{(\det \nabla u)_{x_0, R}}\right|^{2} \Bigg\}\,dx.
\end{align*}
The term involving the determinant reflects the choice made for the function $h$ in \cite{Fo}, namely $h(s)=s^{-2}$ for $s>0$.   Property (REP) is then that for some $L^{\infty}$ function $Q$ and for each $\beta >0$ the inequality
\[ U(x_0,R;M) \leq \beta U(x_0,R;B(x_0,R)) Q((\nabla u)_{x_0, R})\]
holds whenever both $U(x_0,R;B(x_0,R))$ and $|M|/|B(x_0,R)|$ are sufficiently small.  With this in force, $u$ is shown to be $C^{1,\alpha}$ on an open set of full measure in $\om$.   It is not known whether property (REP) actually holds for minimizers of stored-energy functionals.   We note that the minimizer in \cite{Fo} is automatically H\"{o}lder continuous on $\om$ by Sobolev embedding in conjunction with the assumption $p > 8$.

The proposal we make in this paper is to prove local H\"{o}lder regularity by replacing the technical condition (REP) with another, simpler condition which admits a straightforward geometric interpretation.   The aim is distinct from the works on partial regularity cited above in the sense that we prove a lesser degree of regularity but on a larger set, and in a situation where $\eqref{blowup}$ is in operation.   To describe the condition we impose we first fix the class of stored-energy functions to which these new arguments apply.   For $\lambda > 0$ let
\[ W(F) = \lambda  |F|^2 + h(\det F)\]
 defined on $2 \times 2$ matrices $F$, where, for fixed constants $0< c_1 < 1 < c_2 < +\infty$ and $l, m > 0$,  $h:\R \to [0,+\infty]$ is given by
\begin{displaymath} h(s) = \left\{ \begin{array}{l l}+ \infty & \textrm{ if } s \leq 0  \\
|\ln s| & \textrm{ if } s \in (0,c_1)  \\ 
\theta(s) & \textrm{ if } s \in [c_1,c_2] \\
l s + m & \textrm{ if } s \in (c_2, +\infty).\end{array}\right.
\end{displaymath}
The function $\theta$ is chosen so that $h$ is convex and of class $C^1$.  We consider maps such that $E(u) = \int_\om W(\nabla u) \,dx$ is finite, which, in view of the definition of $h$, immediately implies that $\det \nabla u > 0$ a.e\frenchspacing. in $\om$.  By \cite{VG77}, or by \v{S}ver\'{a}k's well-known regularity result \cite[Theorem 5]{Sv88}, such $u$ are in particular continuous.   Thus in our case the improvement in regularity, when it occurs, is from continuous to H\"{o}lder continuous.   The condition we use to ensure the improvement is based on the nonnegativity of 
\[ t(x,x_0,u):= \adj \nabla u(x) (u(x)-u(x_0)) \cdot \frac{x-x_0}{|x-x_0|}, \] 
a quantity which for each $u \in W^{1,2}(\om, \R^2) \cap C^{0}(\om, \R^2)$ is defined a.e\frenchspacing. on a subset of $\om \times \om$.  We shall later refer to $t(x,x_0,u)$ as the twist of $u$ at $x$ relative to $x_0$.  

For smooth diffeomorphisms $v: \R^2 \to \R^2$, say, with $\det \nabla v > 0$ it is easy to show that 
\[ t(x,x_0,v) = \det \nabla v (x_0) |x-x_0| + o(|x-x_0|) \quad \textrm{ as } |x-x_0| \to 0.\]
In particular, $t(x,x_0,v) > 0$ for all $x$ sufficiently close to $x_0$.  For a general $u$ belonging to $W^{1,2}(\om, \R^2) \cap C^{0}(\om, \R^2)$ the same need not be true and must instead be hypothesized.   For a given $u$ we suppose that there is $z \in \om$ and $\delta > 0$, $r'>0$ such that
\begin{equation}\label{posball}
t(x,x_0,u) \geq 0 \quad \textrm{ for a.e. } x \in B(x_0,r') \textrm{ and a.e. } x_0 \in B(z,\delta).
\end{equation}
Suppose we fix $x_0$ in $B(z,\delta)$ for which $t(x,x_0,u) \geq 0$ for a.e\frenchspacing. $x$ in $B(x_0,r')$.    It is shown in Section \ref{starshape} that this condition is equivalent to requiring that $u$ be locally star-shaped in the sense that for a.e\frenchspacing. $R \in (0,r')$ the image $u(S(x_0,R))$ of a circle $S(x_0,R)$ centred at $x_0$ and of radius $R$ is star-shaped: see Definition \ref{defstarshape}.     To do so we rely on the powerful technical machinery of M\"{u}ller and Spector \cite{MS95}.  The key point is that the interaction of $u(S(x_0 ,R))$ with rays emanating from $u(x_0)$ can be understood using properties of the degree.  Further details can be found in Section \ref{starshape}.

When condition \eqref{posball} holds we show in Section \ref{prm} that local minimizers of the functional $E$ are H\"{o}lder continuous on compact subsets of $B(z,\delta)$.   Here, $u$ is a (strong) local minimizer if $E(v) \geq E(u)$ for all $v$ such that $E(v)<+\infty$ and $||v-u||_{\infty;\om}$ is sufficiently small.   The argument uses $t(x,x_0,u) \geq 0$ in two essential ways, by 
\begin{itemize}
\item[(i)] ensuring that a suitable class of outer variations $\ue = u + \eps \varphi$ satisfies $E(\ue) < + \infty$ for all sufficiently small and negative $\eps$, and
\item[(ii)] establishing that the limit 
\[\limsup_{\eps \nearrow 0} \frac{E(\ue)-E(u)}{\eps} \leq 0\]
holds and results in a variational inequality to which elliptic regularity theory can be applied.
\end{itemize}
We remark that the logarithmic growth of $h(\det F)$ as $\det F \to 0+$ plays a pivotal role in (ii); however, it may in future be possible to generalize the technique to other types of singularity.   Indeed, in Section \ref{shear} we do this, albeit in a restricted class of competing functions.

Section \ref{shear} focuses on an application of some of the ideas of Section \ref{prm} to a particular class of maps---the so-called shear maps.  These allow us to treat a much wider class of stored-energy functions in the sense that the $h(\det \nabla u)$ term of the stored-energy function $W$ is required to obey much weaker growth conditions:  see hypotheses (H0)-(H4) of Section \ref{shear} for details.    The term $\int h(\det \nabla u)$ has a regularizing effect on local minimizers, or at least its presence is not an impediment to the improvement of regularity.   The gain in regularity is  tied to imposing a condition (see \eqref{lipshear}) which allows us to construct outer variations as before, and subsequently to adapting and applying a version of elliptic regularity theory.      

The organisation of the paper is as follows.  After introducing notation in Section \ref{notation}, we give in Section \ref{starshape} the geometric characterisation of the condition $t(x,x_0,u) \geq 0$.    
Section \ref{prm} is broken into three subsections, the first and shortest of which introduces the main functionals to be studied and ends with a brief proof of the existence of at least one energy minimizing deformation.   The rest of Section \ref{prm} consists in establishing a variational inequality (Section \ref{elineq}) to which an adapted elliptic regularity theory applies (Section \ref{holreg}).  The paper concludes in Section \ref{shear} with an analysis of the regularity of local minimizers in the class of shear maps, followed by a short appendix containing two ancillary results.

\subsection{Notation}\label{notation}
The standard notation $B(x,r)$ for a ball of radius $r$ and centre $x$ will be used, and its boundary $\partial B(x,r)$ will be written $S(x,r)$. The terms null set and a.e. will refer to $\scl^{2}$ measure;  in all other cases the relevant measure, most often $\sch^{1}$, will be explicitly referred to through `$\sch^{1}-$null' and $\sch^{1}-$a.e. respectively.  $\textrm{Rot}(\psi)$ will denote the matrix 
representing rotation anticlockwise through $\psi$ radians, with the particular letter $J$ reserved for $\textrm{Rot}(\pi/2)$.   The unit vector $e(\theta)$ is defined to be $e(\theta):=(\cos \theta, \sin \theta)^{T}$.  Our notation for norms in Lebesgue and Sobolev spaces, as well as for the spaces themselves, is conventional.  For clarity, we write $||u||_{p;\om}$ for the $L^{p}$ norm $(\int_{\om} |u|^p \,dx)^{1/p}$ and $||u||_{1,p;\om}$ for the Sobolev norm
 $(\int_{\om} |u|^p + |\nabla u|^p \,dx)^{1/p}$ when $1\leq p < +\infty$, with the usual adjustments for the case $p=+\infty$.   Here, $\om$ is a domain in $\R^2$ and $\nabla u$ is the weak derivative of $u$.   In the case of planar maps $u: \om \to \R^2$ we write 
\[ \nabla u = \left(\begin{array}{ c c} u_{\scriptscriptstyle{1,1}} & u_{\scriptscriptstyle{1,2}} \\ u_{\scriptscriptstyle{2,1}} & u_{\scriptscriptstyle{2,2}} \end{array}\right)\]
where $u(x)=(u_{\scriptscriptstyle{1}}(x),u_{\scriptscriptstyle{2}}(x))^{T}$ and $u_{\scriptscriptstyle{i,j}}:=\partial u_{i} / \partial x_{j}$.
Other notation is introduced as and when it is needed.


\section{Positive twist, condition (INV) and local star-shapedness}\label{starshape}

In this section we frame the conditions needed to derive the main regularity results of the paper.  
Let $u_0$ be a homeomorphism of $\om$ onto $u_0 (\om)$ and assume that there is at least one mapping $u$ in the class
\begin{equation}\label{a1} \sca_1:=\left\{u \in W^{1,2}(\om,\R^2): \ \ \det \nabla u > 0 \ \textrm{a.e.}, \ \ \restr{u}{\partial \om} = \restr{u_0}{\partial \om}\right\}.
\end{equation}
Maps belonging to $\sca_1$ will be referred to as admissible maps.   To each such map and each point $x_{0} \in \om$ we associate a map $x \mapsto t(x,x_0,u)$, which we call the relative twist of $u$ about $x_0$, and which is defined in terms of a general element of $W^{1,1}(\om,\R^2)$ as follows:

\begin{definition}\label{deftwist} Let $u \in W^{1,1}(\om,\R^{2})$ and let $x_{0} \in \om$.  Define the twist $t(x,x_0,u)$ of $u$ at $x$ relative to $x_{0}$ by 
\begin{equation}\label{twist} t(x,x_0,u):= \adj \nabla u(x) (u(x)-u(x_{0})) \cdot \frac{x-x_{0}}{|x-x_{0}|}.\end{equation}
\end{definition}

Notice that $t(x,x_0,\om)$ is only defined for a.e. $x$ in $\om$ (since the same is true of $\nabla u$).  However, by convention, we shall refer to $t(x,x_0,u)$ as a function rather than an equivalence class.  Also, although the twist is defined relative to a point $x_0$ in $\om$ and so, strictly speaking, should be referred to as a relative twist, we shall nevertheless refer to it simply as twist, hoping that no confusion arises.

We postpone until Section \ref{prm} both the motivation for studying the relative twist of an admissible map and an explanation for requiring that it be nonnegative on an appropriate subset of $\om \times \om$.   Instead, we focus here on the geometric consequences of requiring that the twist be locally nonnegative.  To be more specific,  it turns out that requiring $t(x,x_0,u) \geq 0$ for a.e\frenchspacing. $x \in B(x_0, r')$ and for some $x_0 \in \om$, where $r' < \dist(x_0,\partial \om)$, is equivalent to the statement that $u$ maps circles $S(x_0, R)$ to star-shaped sets for a.e\frenchspacing. $R \in (0,r')$.  To prove it we exploit the fact that admissible maps automatically have a continuous representative (which we henceforth identify with the map itself), which in turn allows us to apply some of the machinery of \cite{MS95}.    The following technical result records some of the properties of maps in $\sca_1$.   

\begin{proposition}\label{techprop1} Let $u \in \sca_1$ as defined in \eqref{a1}.  Then 
\begin{itemize}\item[(i)] $u$ can be identified with its continuous representative;
\item[(ii)]$u$ has the $N-$property;
\item[(iii)] $u$ is $1-1$ $\scl^{2}-$almost everywhere;
\item[(iv)] for each $x_{0} \in \om$ there is a set $N_{0} \subset \R^+$ with $\scl^1(N_0)=0$ such that
\[\restr{u}{S(x_{0},R)} \ \textrm{is} \  1-1 \ \hone-\textrm{a.e. for all} \ R \in (0,R_{0}) \setminus N_0,\] 
where $R_{0} := \dist(x_{0},\partial \om)$, and
\item[(v)] $u$ satisfies condition (INV).
\end{itemize}
\end{proposition}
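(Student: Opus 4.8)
The plan is to derive all five items by combining \v{S}ver\'{a}k's critical-exponent regularity theorem for planar Sobolev maps with positive Jacobian with the degree-theoretic machinery of \cite{MS95}, the only preparatory task being to check that the structural hypotheses needed to invoke the latter are met by every $u\in\sca_1$. I would begin with (i). Since $u\in W^{1,2}(\om,\R^2)$ with $\det\nabla u>0$ a.e., and since $0<\det\nabla u\le\half|\nabla u|^2\in L^1(\om)$, the hypotheses of \cite[Theorem 5]{Sv88} (equivalently of \cite{VG77}) are in force and yield a representative of $u$ that is continuous, open and discrete, and --- because $\det\nabla u>0$ a.e. --- sense-preserving. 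We identify $u$ with this representative from now on, which establishes (i) and in particular makes $u$ locally bounded.

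Next I would record that the remaining hypotheses of \cite{MS95} are automatic in our planar critical-exponent setting: with $n=2$ and $p=2$ one has $p>n-1$; the entries of $\adj\nabla u$ coincide, up to sign and relabelling, with those of $\nabla u$, so $\adj\nabla u\in L^{2}=L^{p/(p-1)}(\om)$; and in $W^{1,n}(\om,\R^2)$ with $n=2$ there is no concentration of Jacobian, whence $\Det\nabla u=(\det\nabla u)\,\scl^2$ as distributions. Granting these and (i), item (ii) (Lusin's $N$-property) follows from \cite{Sv88} (see also \cite{MS95}). For (v) I would verify condition (INV) by hand: the inner inclusion is immediate from openness, since for an open map the topological image $\mathrm{im}_T(u,B(x_0,R))$ equals $u(B(x_0,R))\setminus u(S(x_0,R))$, so it contains $u(x)$ for every $x\in B(x_0,R)$ once $u(S(x_0,R))$ is adjoined; the outer exclusion is where the homeomorphic boundary datum enters, through the additivity identity $\deg(u,B(x_0,R),\,\cdot\,)=\deg(u_0,\om,\,\cdot\,)-\deg(u,\om\setminus\overline{B(x_0,R)},\,\cdot\,)$ valid off $u(S(x_0,R))\cup u(\partial\om)$, together with the fact that $\deg(u_0,\om,\,\cdot\,)\in\{0,1\}$ because $u_0$ is a homeomorphism.

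Item (iii) then comes from a balance-sheet argument: the change-of-variables identity for the degree gives $\int_\om\det\nabla u\,dx=\int_{\R^2}\deg(u,\om,y)\,dy=|u_0(\om)|$ (using $\deg(u,\om,\,\cdot\,)=\deg(u_0,\om,\,\cdot\,)\le1$), while the area formula --- available once (ii) is known --- gives $\int_\om\det\nabla u\,dx=\int_{\R^2}N(u,\om,y)\,dy$, with $N(u,\om,y)$ the number of preimages of $y$ in $\om$; since $N(u,\om,y)\ge\deg(u,\om,y)\ge0$ a.e., the two identities force $N(u,\om,y)=\deg(u,\om,y)\le1$ for a.e.\ $y$, i.e.\ $u$ is $1$-$1$ $\scl^2$-a.e. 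For (iv) I would invoke the spherical refinement of the same circle of ideas from \cite{MS95}: for a.e.\ $R\in(0,R_0)$ the trace of $u$ on $S(x_0,R)$ is a (continuous) $W^{1,1}$ map of the circle, and the relation between $u(S(x_0,R))$ and $\mathrm{im}_T(u,B(x_0,R))$ shows that the existence of a positive-$\scl^1$-measure set of radii on which $\restr{u}{S(x_0,R)}$ failed to be $\hone$-a.e.\ injective would contradict (INV) and $\det\nabla u>0$ a.e.; a Fubini argument in $R$ then produces the exceptional null set $N_0$.

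The step I expect to be the main obstacle is (v): unwinding the definition of (INV) and establishing the outer exclusion carefully, since (INV) is a genuinely topological requirement and one must check that the continuity, openness and discreteness supplied by \cite{Sv88} really are enough to run the degree additivity and change-of-variables identities in the critical case. A closely related subtlety is that (ii) is not a soft consequence of membership in $W^{1,2}$ but a bona fide regularity statement --- and it is precisely what licenses the area formula used in (iii) --- so the order of deduction, namely (i) and the structural hypotheses, then (ii) and (v), then (iii), then (iv), has to be respected; underpinning all of it is the absence of cavitation in $W^{1,2}(\om,\R^2)$, which is what makes both $\Det\nabla u=(\det\nabla u)\,\scl^2$ and the area formula legitimate.
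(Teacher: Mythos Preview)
Your plan is essentially correct and draws on the same toolkit as the paper---\v{S}ver\'{a}k's regularity theorem together with the degree machinery of \cite{MS95}---but the ordering and some of the arguments differ in ways worth flagging.  The paper establishes (iii) \emph{before} (v): it first gives a direct degree-additivity proof that $u$ is $1$--$1$ a.e.\ (the argument you outline for the outer exclusion in (INV) is exactly this, applied to the whole domain), and then uses (iii) to dispatch the outer exclusion of (INV) in one line.  Your route---(v) via degree additivity, then (iii) by the area-formula balance sheet---also works, but note that your outer exclusion still implicitly needs the local-degree fact that for a.e.\ $x$ one has $\deg(u,B(x,\rho),u(x))=1$ for small $\rho$, which is precisely the engine of the paper's direct proof of (iii); so the two orderings really invoke the same idea at the same depth.

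Two places where your sketch should be tightened.  First, the inner inclusion of (INV): the equality $\mathrm{im}_T(u,B)=u(B)\setminus u(S)$ does \emph{not} follow from openness alone; you need discreteness and sense-preservation (so that every isolated preimage contributes a positive local index), both of which you have from \cite{Sv88} but should invoke explicitly---this is what the paper gets by citing \cite[Theorem 3, Cor.~1(ii)]{Sv88}.  Second, your argument for (iv) is more convoluted than necessary: once (iii) is known, the exceptional set $\{x:\exists\,x'\neq x,\ u(x')=u(x)\}$ has $\scl^2$-measure zero, and a straight Fubini slicing in polar coordinates about $x_0$ gives the $\scl^1$-null set $N_0$ of bad radii directly (this is exactly \cite[Lemma 3.1, eq.~(3.9)]{MS95}, which the paper cites).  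The detour through a contradiction with (INV) is not needed and, as written, is not clearly a valid implication.
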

\begin{proof} (i):  This is \cite[Theorem 4]{Sv88}, or \cite[Theorem 2.32]{VG77}.  (ii):  See \cite[Theorem 5.32]{FG95}, or \cite[Theorem 6]{Sv88}.  It helps to read the latter in conjunction with the proof of \cite[Lemma 5 (i)]{Sv88}.   \newline (iii):  The hypothesis $\restr{u}{\partial \om} = \restr{u_0}{\partial \om}$, where $u_0$ is a homeomorphism, means that \cite[Lemma 5 (i)]{Sv88} applies directly.  Alternatively, apply (v) and \cite[Lemma 3.4]{MS95} to reach the same conclusion.  We also present a simple, direct proof, as follows.   Firstly, by \cite[Theorem 5.21]{FG95}, the set
\begin{equation}\label{om1} \om_1:=\{x \in \om: \ u \textrm{ has a classical derivative at } x \}\end{equation}
is a set of full measure in $\om$.    By excluding a further subset of measure zero on which $\det\nabla u =0$, and then if necessary relabelling $\om_1$, we can assume that $\det \nabla u(x) > 0$ for all $x \in \om_1$.   Part (i) and \cite[Lemma 5.9]{FG95} then imply that for each $x$ belonging to $\om_1$ and for all sufficiently small $R>0$, $u(x+h) \neq u(x)$ for all $h \in \overline{B(0,R)}$ and that $d(u,B(x,R),u(x))= \sgn \det \nabla u (x) = 1$.  The former ensures that $u(x) \notin u(S(x,R))$ and hence that the degree as written is well-defined. 
Let $y \in u_0(\om) \setminus u_{0}(\partial \om)$.   Since $u$ agrees with the homeomorphism $u_0$ on $\partial \om$ then standard properties of the degree mean that $d(u,\om,y)=d(u_0,\om,y)=1$, and hence that $u^{-1}(y)$ is nonempty.  Suppose for a contradiction that $u^{-1}(y) \cap \om_{1}$ contains at least two points $x_1 \neq x_2$.   By the above, there are balls $B_{1}:=B(x_{1},R)$ and $B_{2}=B(x_{2},R)$ such that $d(u,B_1,y)=d(u,B_2,y)=1$.   By the excision and domain decomposition properties of the degree we must therefore have
\[ d(u,B,y)=d(u,B_1,y)+d(u,B_2,y) + d(u,D,y),\]
where $D:=\om \setminus \overline{B_1 \cup B_2}$.   But the left-hand side of this equation is $1$, while the right-hand side is, in view of $\det \nabla u > 0$ a.e., Heinz's formula (see e.g. \cite[Proposition 1.7]{FG95}), and the values of $d(u,B_i,y)$ aready indicated for $i=1,2$, at least $2$, which is a contradiction.   Hence $u^{-1}(y) \cap \om_1$ contains one point or is empty.  Therefore by restricting $u$ to $\om_1$ we see that $u$ is $1-1$ a.e..
\newline (iv) This follows from \cite[Proposition 2.8 (iii)]{MS95} and from (iii) above:  see \cite[Lemma 3.1, Step 1, equation (3.9)]{MS95}.
\newline (v) Condition (INV) is given by \cite[Definition 3.2]{MS95}.  To verify it we must show that for each $x_{0} \in \om$ there exists an $\scl^1$ null set $N_{0}$ such that,  for all $r \in (0,R_0) \setminus N_0$, $\restr{u}{S(x_0, R)}$ is continuous, 
\begin{itemize}\item[(I)] $u(x) \in \im_{T}\left(u,B(x_{0},R)\right) \cup u(S(x_0, R))$ for $\scl^2-$a.e. $x \in \overline{B(x_0, R)}$, and 
\item[(II)]  $u(x) \in \R^2 \setminus  \im_{T}\left(u,B(x_{0},R)\right)$ for $\scl^2-$a.e. $x \in \om \setminus \overline{B(x_0, R)}$.
\end{itemize}
By (i), the continuity of $u$ on $S(x_{0},R)$ for all $R \in (0,R_0)$ is assured, so the first part is automatically true (with $N_0$ empty).  Condition (I) holds by appealing to \cite[Theorem 3, Cor. 1 (ii)]{Sv88}, where, in their notation (and in view of the constraint $\det \nabla u > 0$ a.e.) the set $E(u,B(x_0, R))$ replaces $\im_{T}(u,B(x_0, R))$.  To see that condition (II) holds it is sufficient to show that the set $\om_2:=\{x \in \om \setminus \overline{B(x_0, R)}: \ u(x) \in  \im_{T}(u,B(x_0, R))\}$ has $\scl^2$ measure zero.   Let $x \in \om_{2}$.  Then $x \notin S:=S(x_0, R)$ and so, if $y:=u(x)$, the degree $d(y):=\deg(u,S,y)$ is well defined and, by hypothesis, satisfies $d(y)=1$.   By properties of the degree there is at least one $x' \in B(x_{0},R)$ such that $u(x')=y$, where $x' \neq x$ in particular.   Since $u$ is $1-1$ a.e by (iii), and in the notation of that part of the proof, we must have $x \in \om \setminus \om_{1}$, which, since the latter set is $\scl^2$ null, concludes the proof of (v). \qed   
\end{proof}

We now turn to the derivation of necessary and sufficient conditions for the local nonnegativity of the twist $t(x,x_0,u)$ which apply in our setting.   To begin with we note that, for sufficiently regular maps $u$, the condition $\det \nabla u > 0$ implies that $t$ is positive everywhere in a sufficiently small ball about $x_{0}$.   For later use we record the following result, whose proof is straightforward and is therefore omitted.

\begin{proposition}\label{diffeo} Let $x_0 \in \om$ and let $u: \om \to \R^2$ be a diffeomorphism in a neighbourhood of $x_{0}$. Then
\[ t(x,x_0,u) = \det \nabla u(x_{0}) |x-x_{0}| + o(|x-x_{0}|) \ \ \textrm{as} \ |x-x_{0}| \to 0.\]
\end{proposition}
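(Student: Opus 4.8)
The plan is to Taylor-expand both factors appearing in \eqref{twist} about $x_0$ and then to invoke the elementary identity $\adj F\, F = (\det F)\,\1$, valid for every $2\times 2$ matrix $F$. Write $h = x-x_0$. Since $u$ is a diffeomorphism on a neighbourhood of $x_0$ it is in particular of class $C^1$ there, so on the one hand
\[ u(x_0+h)-u(x_0) = \nabla u(x_0)\,h + \rho_1(h), \qquad |\rho_1(h)| = o(|h|) \quad \textrm{as } |h|\to 0,\]
and on the other hand, by continuity of $\nabla u$ at $x_0$,
\[ \adj \nabla u(x_0+h) = \adj \nabla u(x_0) + \rho_2(h), \qquad |\rho_2(h)| = o(1) \quad \textrm{as } |h|\to 0.\]

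First I would substitute these two expansions into $\adj \nabla u(x)\,(u(x)-u(x_0))$ and multiply out, obtaining the four terms $\adj\nabla u(x_0)\,\nabla u(x_0)\,h$, $\ \adj\nabla u(x_0)\,\rho_1(h)$, $\ \rho_2(h)\,\nabla u(x_0)\,h$ and $\rho_2(h)\,\rho_1(h)$. The first of these equals $\det \nabla u(x_0)\,h$ by the adjugate identity; each of the remaining three is bounded in norm by the product of a factor that is $O(1)$ or $o(1)$ with one that is $|\rho_1(h)|$ or $|h|$, and is therefore $o(|h|)$. This gives
\[ \adj \nabla u(x)\,(u(x)-u(x_0)) = \det \nabla u(x_0)\,(x-x_0) + o(|x-x_0|).\]

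Finally I would take the inner product of both sides with the unit vector $(x-x_0)/|x-x_0|$: since this vector has unit length, the image of the $o(|x-x_0|)$ term under it is again $o(|x-x_0|)$, while $(x-x_0)\cdot (x-x_0)/|x-x_0| = |x-x_0|$, and the claimed expansion follows. There is no genuine obstacle in this argument; it is routine once one observes that the diffeomorphism hypothesis simultaneously supplies the differentiability needed for the first expansion and the continuity of $\nabla u$ needed for the second. The only point deserving the slightest attention is the bookkeeping of error terms, and in particular checking that the cross term $\rho_2(h)\,\nabla u(x_0)\,h$, which pairs an $o(1)$ factor with a quantity of exact order $|h|$, is indeed $o(|h|)$.
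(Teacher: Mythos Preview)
Your argument is correct and is exactly the routine computation the paper has in mind; indeed, the paper omits the proof entirely, saying it ``is straightforward and is therefore omitted.'' There is nothing to compare against, and your Taylor expansion combined with the identity $\adj F\,F=(\det F)\,\1$ is the natural way to obtain the result.
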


It follows that if $\det \nabla u(x_{0})> 0$ then the twist of $u$ relative to $x_{0}$ is necessarily positive on a sufficiently small ball around $x_{0}$.  The challenge is to extend this result to maps $u$ belonging, for example, to $\sca_1$, which clearly need not be $C^{1}$ and where $\det \nabla u (x_{0})$ need not be defined pointwise.   In such cases it is possible to construct maps for which $t(x,x_0,u) < 0$ for $x$ belonging to a set of positive measure.   Rather than give these examples we prefer to avoid them altogether by appealing to Lemma \ref{starshape1} below, which is phrased in terms of local star-shapedness:

\begin{definition}\label{defstarshape} Let $u: \om \to \R^{2}$ be continuous and let $S(x_{0},R) \subset \om$.   Then $u(S(x_{0},R))$ is star-shaped with respect to $u(x_{0})$ if:
\begin{itemize}\item[(i)] $a:=u(x_{0})$ belongs to $\R^2 \setminus u(S(x_{0},R))$, and
\item[(ii)] each half-line 
\[[a,u(x)]_{+}:=\{ a + \mu(u(x)-a): \ \ \mu \geq 0\}\]
is such that $u(S(x_{0},R)) \cap [a,u(x)]_{+}$ forms a connected subset of $[a,u(x)]_{+}$ for $\sch^{1}-$a.e. $x \in S(x_{0},R)$.
\end{itemize}
\end{definition}

\begin{lemma}\label{starshape1}
Let $t(x,x_0,u)$ be as per Definition \ref{deftwist} and let $u \in \sca_1$, as defined in 
\eqref{a1}.   Let $x_0 \in \om_1$ as defined in \eqref{om1}, so that $u$ is classically differentiable at $x_0$, and suppose $\delta_0 < \dist(x_0,\partial \om)$.  Then 
\begin{itemize}\item[(A)]$t(x;u,x_{0}) \geq 0$ for a.e. $x \in B(x_0,\delta_0)$ if and only if
\item[(B)] $u(S(x_{0},R))$  is star-shaped with respect to $u(x_{0})$ for almost every $R \in (0,\delta_0)$.
\end{itemize}
\end{lemma}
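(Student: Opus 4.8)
The plan is to exploit the fact that, by Proposition \ref{techprop1}, $u$ restricted to almost every circle $S(x_0,R)$ is continuous and injective $\hone$-a.e., and that $u$ satisfies condition (INV). These are exactly the hypotheses under which the machinery of \cite{MS95} gives a good description of the image of a circle and of the topological image $\im_T(u,B(x_0,R))$ of the enclosed ball. The key geometric observation is that the sign of $t(x,x_0,u)$ measures, up to the positive factor $|x-x_0|^{-1}$, the inner product of $\adj\nabla u(x)(u(x)-u(x_0))$ with the outward radial direction $(x-x_0)/|x-x_0|$, and that $\adj\nabla u(x)$ applied to a tangent vector of $S(x_0,R)$ at $x$ is (a rotation of) the tangent vector of the curve $u(S(x_0,R))$ at $u(x)$. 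Concretely, parametrising $S(x_0,R)$ by $\gamma(\phi)=x_0+Re(\phi)$, one has $\tfrac{d}{d\phi}u(\gamma(\phi))=\nabla u(\gamma(\phi))\,R\,Je(\phi)$ for a.e.\ $\phi$, and a short computation using $J^T\adj F = (\det F)\, F^{-1}\cdot(\text{something})$ — more precisely $(\adj F)^T J = \det F\, J\, F^{-1}$-type identities, or directly $\adj\nabla u(x)\,(x-x_0)\cdot v = (x-x_0)\cdot(\adj\nabla u(x))^T v$ — shows that $t(\gamma(\phi),x_0,u)$ is, up to a positive scalar, the signed "angular speed'' with which the curve $\phi\mapsto u(\gamma(\phi))$ winds around the point $a:=u(x_0)$. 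Thus $t\geq 0$ a.e.\ on $B(x_0,\delta_0)$ translates, after Fubini in polar coordinates, into: for a.e.\ $R$, the curve $\phi\mapsto u(\gamma(\phi))$ has nonnegative angular velocity about $a$, i.e.\ its radial coordinate relative to $a$ is monotone along each ray — which is precisely the star-shapedness condition (ii) of Definition \ref{defstarshape}.

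First I would set up polar coordinates about $x_0$ and use the coarea/Fubini theorem to record the equivalence: $t(x,x_0,u)\geq 0$ for a.e.\ $x\in B(x_0,\delta_0)$ if and only if, for a.e.\ $R\in(0,\delta_0)$, $t(x,x_0,u)\geq 0$ for $\hone$-a.e.\ $x\in S(x_0,R)$; here one must be slightly careful, as in \cite[Lemma 3.1]{MS95}, that the circles on which the $\hone$-trace of $u$ and $\nabla u$ behave well form a set of full measure. Then, fixing such a good radius $R$, I would compute $t$ along the circle as above and identify the quantity $t(\gamma(\phi),x_0,u)/R$ with $\tfrac{d}{d\phi}\big(\text{angle of } u(\gamma(\phi))-a\big)\cdot|u(\gamma(\phi))-a|^2$ (away from the—at most countable, by injectivity—set of $\phi$ where $u(\gamma(\phi))=a$); its sign is thus the sign of the angular velocity. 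For the direction (A)$\Rightarrow$(B): from nonnegative angular velocity one deduces that along each half-line $[a,u(x)]_+$ the preimage set $\{\phi: u(\gamma(\phi))\in[a,u(x)]_+\}$ is "monotone,'' and then that the intersection $u(S(x_0,R))\cap[a,u(x)]_+$ is connected; the point $a\notin u(S(x_0,R))$ follows because (INV) together with $\det\nabla u>0$ a.e.\ forces $a=u(x_0)\in\im_T(u,B(x_0,R))$, an open set disjoint from $u(S(x_0,R))$ for a.e.\ $R$, as in Proposition \ref{techprop1}(v). For (B)$\Rightarrow$(A): if for a.e.\ $R$ the image circle is star-shaped about $a$, then for such $R$ the curve $u\circ\gamma$ cannot have angular velocity of both signs on sets of positive measure without violating connectedness of the ray-intersections (a strictly negative excursion followed by a return would produce a disconnected intersection with some half-line), so $t\geq 0$ $\hone$-a.e.\ on $S(x_0,R)$, and Fubini gives $t\geq 0$ a.e.\ on the ball.

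The main obstacle I anticipate is the rigorous handling of the low regularity: $u$ is merely $W^{1,2}$, so $\nabla u$ has an $\hone$-trace on $S(x_0,R)$ only for a.e.\ $R$, and the parametrised curve $\phi\mapsto u(\gamma(\phi))$ is only absolutely continuous (not $C^1$) for such $R$. Consequently "angular velocity'' must be defined via the $W^{1,1}$ function $\phi\mapsto u(x_0+Re(\phi))$ and its a.e.\ derivative, and the lift to an angular coordinate must be justified on the (open, by continuity and injectivity $\hone$-a.e.) set where $u(\gamma(\phi))\neq a$; one must also rule out that $u\circ\gamma$ spends positive $\hone$-measure at $a$, which again uses injectivity $\hone$-a.e.\ from Proposition \ref{techprop1}(iv). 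Translating "monotone angular coordinate'' into "connected intersection with every half-line through $a$'' — for $\hone$-a.e.\ $x\in S(x_0,R)$, as required by Definition \ref{defstarshape}(ii) — is where the degree-theoretic input of \cite{MS95} does the real work: one identifies the intersection of $u(S(x_0,R))$ with a ray through $a$ via the winding number / degree of $u|_{S(x_0,R)}$ around points on that ray, and monotonicity of the angular coordinate is equivalent to that degree being $0$ or $1$ (never oscillating), which is exactly the connectedness statement. Making this dictionary precise, and verifying the exceptional null sets line up, will be the bulk of the argument; the differential-geometric identity relating $t$ to angular velocity is routine by comparison.
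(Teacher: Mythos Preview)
Your core strategy matches the paper's exactly: reduce via Fubini to almost every circle $S(x_0,R)$, write $u(x_0+Re(\theta))=\rho(\theta)e(\sigma(\theta))$ in polar coordinates about $a=u(x_0)$, and observe that $t(x_0+Re(\theta),x_0,u)=\rho^2(\theta)\dot\sigma(\theta)/R$, so that $t\geq 0$ a.e.\ is equivalent to $\dot\sigma\geq 0$ a.e.  Where you diverge is in the passage from ``$\dot\sigma\geq 0$'' to ``ray-intersections are connected'': the paper argues by contradiction with a fairly elaborate topological analysis (constructing branches $P(x),P(y)$ of $u(S)$, tracking components of $\R^2\setminus u(S)$ and the degree on each, and forcing $\sigma$ to decrease along some branch), whereas your sketch points to the direct route.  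The direct route does work, and is cleaner: once $\rho\geq c>0$ (so $\sigma\in W^{1,2}$ and the lift is globally defined) and the winding number $(\sigma(2\pi)-\sigma(0))/2\pi$ equals $1$, monotonicity of $\sigma$ forces $\sigma^{-1}(\psi)$ to be a single closed arc for every angle $\psi$, and the ray-intersection $u(S)\cap L_\psi$ is then the continuous image of that arc, hence connected.  What the degree theory from \cite{MS95} actually supplies here is only the winding number (i.e.\ $d(u,S,a)=1$); your sentence asserting that ``monotonicity of the angular coordinate is equivalent to that degree being $0$ or $1$'' is incorrect---the degree is $\{0,1\}$-valued for \emph{every} map in $\sca_1$ by (INV), star-shaped or not, so it cannot characterise monotonicity.

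Two smaller points to fix.  First, your justification of Definition~\ref{defstarshape}(i) via (INV) is not quite enough: (INV) gives $u(x_0)\in\im_T(u,B)\cup u(S)$ but does not by itself exclude $u(x_0)\in u(S)$.  The paper (and the hypothesis $x_0\in\om_1$) uses classical differentiability at $x_0$ with $\det\nabla u(x_0)>0$, which gives $u(x_0+h)\neq u(x_0)$ for all small $h$ and hence $a\notin u(S)$ for every $R$ up to some $\delta_1$; this also delivers $d(u,S,a)=1$ directly.  Second, in the direction (B)$\Rightarrow$(A) your sketch only rules out $\dot\sigma$ taking both signs; you must also exclude the possibility $\dot\sigma\leq 0$ a.e.\ (clockwise traversal).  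The paper handles this by observing that $\dot\sigma\leq 0$ everywhere forces the generalised exterior normal to point into $\im_T(u,B)$, making the degree on the component containing $a$ equal to $0$ rather than $1$---a contradiction.  With these corrections your outline is complete, and arguably more transparent than the paper's argument for (A)$\Rightarrow$(B).
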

\begin{proof}  Let $a=u(x_{0})$ and, for $x \neq x_{0}$,  $\nu(x):=(x-x_{0})/|x-x_{0}|$.    Since $x_{0}$ and $u$ are fixed we abbreviate $t(x,x_0,u)$ to $t(x)$.  Let $R \in (0,\delta_0)$ and set $S:=S(x_0,R)$. We further assume that $\nabla u$ coincides almost everywhere with its approximate derivative $\textrm{ap}\,Du$ (see \cite[Section 6.1.3]{EG92}, for example), and by a slight abuse of notation continue to denote the latter by $\nabla u$.    Thus, in the notation of \cite[Eq. (3.14)]{MS95}, the induced normal on $u(S)$ is given by
\begin{equation}\label{indnorm} \tilde{\nu}(u(x))= \frac{\cof \nabla u(x) \nu(x)}{|\cof \nabla u(x) \nu(x)|}, \quad \quad \quad x \in S.\end{equation}
Note that since $\det \nabla u > 0$ a.e., $|\cof \nabla u(x) \nu(x)|\neq 0$ a.e. in $\om$.   In terms of $t$, we have
\begin{eqnarray*}  t(x) & = & \adj \nabla u(x) (u(x)-a) \cdot \nu(x) \\  & = & (u(x) - a) \cdot \cof \nabla u(x) \nu(x),
\end{eqnarray*}
so that 
\begin{equation}\label{t:geom1}t(x) \geq 0 \ \ a.e. \ \  \textrm{if and only if} \ \  (u(x)-a)\cdot \tilde{\nu}(u(x)) \geq 0\ \ a.e.\end{equation}  
Roughly speaking, the rightmost inequality in \eqref{t:geom1} says that the curve $u(S)$ turns monotonically while $\nu(x)$ traverses a circle. 

  By Proposition \ref{techprop1} and \cite[Lemma 3.5 (ii)]{MS95}, we may assume that the degree $d(y):=\deg(u,S,y)$ satisfies $d(y) \in \{0,1\}$ for all $y \in \R^{2} \setminus u(S)$.  Thus, in the notation of \cite[Lemma 3.5]{MS95}, the set 
$U_{1}:=\{y \in R^2 \setminus u(S): \ d(y) \geq 1\}$ coincides with the topological image $\textrm{im}_{T}\,(u,B(x_{0},R))$.   In particular, by \cite[Step 6, Lemma 3.5]{MS95}, $\tilde{\nu}(u(x))$ coincides $\sch^{1}-$a.e. with the generalized exterior normal on the set $\partial^\ast U_1=u(S)$.  Note that the latter holds up to a set of $\sch^{1}-$measure zero:  this follows from \cite[Step 3, Lemma 3.5]{MS95}, where it is shown that $U_{1}$ is a set of finite perimeter (which itself follows from the fact that the degree $d$ is a BV function), so that the reduced boundary $\partial^\ast U_{1}$ differs from $\partial U_{1}$ only by an $\sch^{1}-$null set.  To conclude the preliminaries we relate the generalized exterior normal to the tangent to $u(S)$ as follows.   Firstly, by writing
\[ \nabla u(x) = u_{R}(x) \otimes  \nu(x) + u_{\tau}(x) \otimes J\nu(x)\]
for $x \neq x_{0}$, we find that, by a slight abuse of notation 
\[ t(x) = J (u(x)-a) \cdot u_{\tau}(x)\]
where $u_{\tau}(x)=\frac{1}{R}\partial_{\theta} u(x_{0}+R e(\theta))$, $x=x_{0}+R e(\theta)$, and $e(\theta) = (\cos \theta, \sin \theta)^{T}$.  Thus $t$ has a representation in local polar coordinates.   Further, it is clear from $\cof \nabla u(x) \nu(x) = J^{T}u_{\tau}(x)$ and \eqref{indnorm} that
\begin{equation}\label{j:tildenu}\tilde{\nu}(x) = \frac{J^{T}u_{\tau}(x)}{|u_{\tau}|} \quad \quad \sch^{1}-a.e. \ x \in S.\end{equation}
Thus the generalized exterior normal to $U_{1}=u(B(x_{0},R))$ at $u(x)$ is obtained by scaling and then rotating $u_{\tau}(x)$ clockwise through $\pi/2$ radians at $\sch^{1}-$a.e. $x$ in $S$.   

\vspace{2mm}
\noindent($\mathbf{t \geq 0 \Rightarrow u(S)}$\textbf{ is star-shaped}).    

\vspace{1mm}
\noindent To see that part (i) of Definition \ref{defstarshape} holds  we recall that for each $x_0 \in \om_1$ there is $\delta_1 > 0$ such that 
\[u(x_0+h) \neq u(x_0) \ \ \ \forall h \in \overline{B(0,\delta_1)}.\]
By continuity, it follows that $u(x_0) \notin u(S(x_0,R))$ for all $R \in (0,\delta_0)$.   Therefore part (i) of Definition \ref{defstarshape} holds.

\vspace{1mm}
\noindent To prove that part (ii) of Definition \ref{defstarshape} holds we make use of the assumption $t \geq 0$ in conjunction with some topological observations.  To begin with, by  replacing $u(x)$ with $u(x)-a$, we may assume that $a=0$, and hence that $t(x)=u(x) \cdot Ju_{\tau}(x)$.   Define the functions $\rho(\theta)$ and $\sigma(\theta)$ by 
\begin{eqnarray} \nonumber \rho(\theta) & := & |u(x_0+Re(\theta))| \\
\label{j:sigmadef} e(\sigma(\theta)) & := & \frac{u(x_0+ Re(\theta))}{|u(x_0+ Re(\theta))|},
\end{eqnarray}
so that
\[u(Re(\theta)) = \rho(\theta)e(\sigma(\theta)).\]
Note that $\sigma$ is so far only defined up to a multiple of $2\pi$.  To fix one particular $\sigma$ we argue as follows.  Firstly, since part (i) of Definition \ref{defstarshape} holds, we can suppose that $\rho(\theta)\geq c$ for some constant $c>0$ and for all $\theta \in [0,2 \pi)$.   Further, since $\theta \mapsto u(Re(\theta))$ belongs to $W^{1,2}([0,2 \pi),\R^2)$, it follows that $\rho$   also belongs to that class.  Next, note that \eqref{j:sigmadef} together with the continuity of both $u$ and $\rho$ implies that $\sigma$ is locally continuous, for example by viewing it as 
\[ \sigma(\theta) = \cos^{-1} \big(u_{1}(Re(\theta))/ \rho(\theta)\big)\]
suitably interpreted.  It follows that if we fix $\sigma$ in a neighbourhood of some $\theta=0$ and extend this representative to $[0, 2\pi)$ then the resulting function, again denoted by $\sigma$, is uniquely defined.  It is now easy to see that $\sigma$ belongs to $W^{1,2}([0,2\pi),\R)$, and that 
\begin{equation}\label{j:sigmadot} t(Re(\theta)) = \frac{\rho^2 (\theta)\dot{\sigma}(\theta)}{R} \ \ \ a.e. \ \theta \in [0,2\pi). \end{equation} 
In particular, since $t \geq 0$ a.e. by assumption, it follows that $\dot{\sigma} \geq 0$ a.e.

Now suppose for a contradiction that part (ii) of  Definition \eqref{defstarshape} does not hold.  
Then there is $x \in S$ such that the half-line $[0,u(x)]^{+}$ disconnects $u(S) \cap [0,u(x)]^+$.  
We claim that this implies that $\sigma$ strictly decreases somewhere.  The proof is broken into three steps, where we make use of the more concise notation \[L_{u(x)}:=[0,u(x)]^+.\]

\vspace{1mm}
\noindent{\textbf{Step 1}}  Since $u(S)$ is compact and does not contain $0$, we can without loss of generality assume $x \in S$ is such that $0 <|u(x)|= \min\{|z|:  \ z  \in u(S) \cap L_{u(x)}\}$.   Let $\Gamma_x$ be the connected component of $u(S) \cap L_{u(x)}$ containing $u(x)$.   The supposition above means that there is at least one other component $\Gamma_y$, say, of $u(S) \cap L_{u(x)}$ containing $u(y)$ such that $\Gamma_x \cap \Gamma_y= \emptyset$
and $\dist(\Gamma_x, \Gamma_y)$ is minimal.   For definiteness, and by changing $y$ if necessary, we can assume $u(y)$ is the closest point in $\Gamma_y$ to $\Gamma_x$.

There are two possibilities for the behaviour of the curve $u(S)$ in a neighbourhood of $u(x)$:  either there is a branch of $u(S)$ containing $u(x)$ and lying in the open sector
\[ C_{u(x),\eps_0 }:=\bigcup_{0 < \psi < \eps_0} L_{\Rot(\psi)u(x)}  \setminus \{0\}\] 
which borders and, for small $\eps_0 > 0$, lies anticlockwise relative to $L_{u(x)}$, or there is a branch with the same properties lying instead in the open sector 
\[C_{u(x),-\eps_0} :=  \bigcup_{0 > \psi > -\eps_0} L_{\Rot(\psi)u(x)}  \setminus \{0\}\]
lying clockwise relative to $L_{u(x)}$.   The second of these is eliminated by the assumption $t \geq 0$ using the same argument as is given at the beginning of Step 2 below.   Therefore we work with the first possibility now and make a remark in Step 2 about the impossibility of the second.

By rotating $L_{u(x)}$ anticlockwise we generate two continuous paths in 
$u(S)$ starting at $u(x)$ and $u(y)$, denoted by $P(x)$ and $P(y)$   respectively, whose construction is given below.   Using the Jordan separation theorem, we write the complement of $u(S)$ in $\R^2$ as a union of connected, open sets, $G_{i}$, $i=0,1,2,\ldots$, which we refer to as components.   Since $u(S)$ is compact there is just one unbounded component, $G_{0}$, say.  Let $G_{1}$  be the (bounded) component containing $0$.

To define $P(x)$ we proceed as follows.  For $\eps_0 >0$ sufficiently small, there is a unique branch $P_0$, say, of $u(S)$ containing $u(x)$, lying strictly in the open sector $C_{u(x),\eps_0}$ and which, in addition satisfies, $\hone(\partial G_{1} \cap P_0) >0$.  Necessarily, points in $P_0$ are in $1-1$ correspondence with angles of rotation $\psi$ in the sense that each set $P_0 \cap L_{\Rot(\psi)u(x)}$ is a singleton for $0 \leq \psi < \eps_0$.      

Let $\psi_0 > \eps_0$ be the first $\psi$ for which 
$P_0 \cap L_{\Rot(\psi)u(x)}$ meets a subset $\Gamma_{x_1}$ of $u(S)$ such that $\hone\big(\Gamma_{x_1} \cap L_{\Rot(\psi_0)u(x)}\big) >0$. By continuity, the point 
\[u(x_1):=\lim_{\psi \nearrow \psi_0} P_0 \cap L_{\Rot(\psi)u(x)}\]
is well defined and, moreover, $u(x_1) \in \Gamma$.   

\vspace{1mm}
\noindent{\textbf{Step 2}}
Let $Q$ be a branch of $u(S)$ with initial point $u(w) \in \Gamma$, $u(w) \neq u(x_1)$, and assume for a contradiction that $Q$ lies in the open sector $C_{u(x_1),-\eps_1}$ for all sufficiently small $\eps_1>0$.  Recall that $C_{u(x_1),-\eps_1}$ borders and lies clockwise relative to $L_{u(x_1)}$.  If there is just one branch $Q$ as described then both $Q$ and $P_0$ have a non-trivial intersection with the boundary of $G_{1}$.   The relation \eqref{j:tildenu} then forces an orientation on both $Q$ and $P_0$, as shown in Figure \ref{z1a}.

\begin{figure}[ht]
\psfragscanon
\psfrag{a}{$L_{u(x)}$}
\psfrag{b}{$L_{u(x_1)}$}
\psfrag{c}{$u(y)$}
\psfrag{d}{$\Gamma_x$}
\psfrag{e}{$u(x)$}
\psfrag{f}[r]{$u(x_1)$}
\psfrag{g}[r]{$u(w)$}
\psfrag{h}{$G_1 \arrowvert 1$}
\psfrag{i}{$\nu$}
\psfrag{j}{$\tau$}
\psfrag{k}{$\nu$}
\psfrag{l}{$\tau$}
\psfrag{Q}{$Q$}
\psfrag{0}{$0$}
\includegraphics[width=0.6\textwidth]{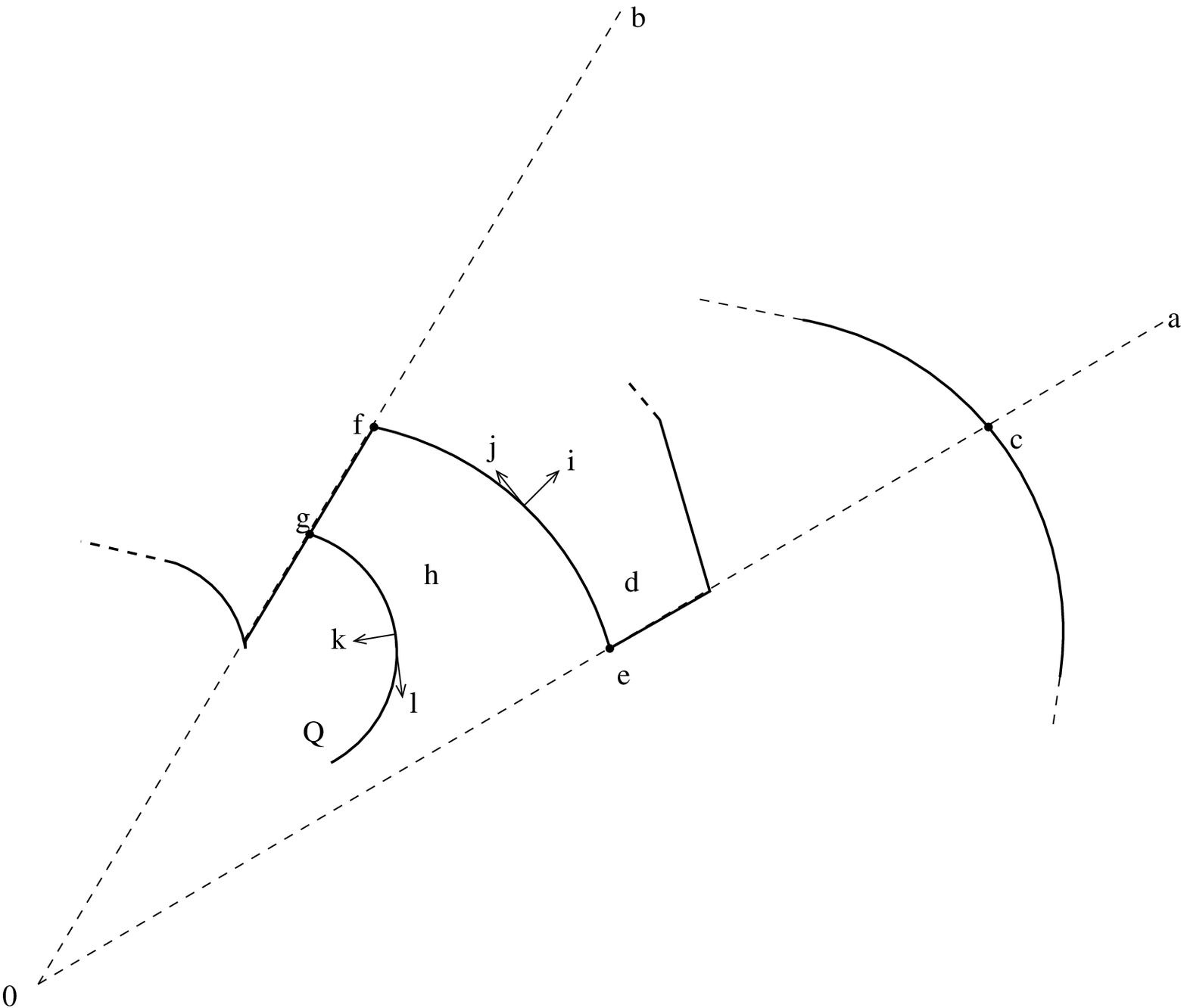}
\caption{The notation $G \, | \, n$ is used to indicate that the degree $d$ is $n$ on the component $G$.  If just one branch $Q$ of $u(S)$ meets $\Gamma_{x_1}$ at $u(w)$ as shown then $\sigma$ decreases along $Q$.}
\label{z1a}
\end{figure}

In view of its inclusion in $C_{u(x_1),-\eps_1}$, it is clear that $\sigma$ strictly decreases along $Q$,  contradicting $\dot{\sigma} \geq 0$.    If there are two or more branches then we can find a component $G'$ of $\R^2 \setminus u(S)$ whose boundary $\partial G'$ has a nontrivial intersection with $Q_1$ and $Q_2$, say.   If $\restr{d}{G'}=0$ then the orientation implied by \eqref{j:tildenu} is shown in Figure \ref{Z2a}
; if $\restr{d}{G'}=1$ then the arrangement is as per Figure \ref{Z2b} 
.   In the first case, $\sigma$ is clearly decreasing along $Q_1$, while in the second $\sigma$ decreases along $Q_2$.   Either way, we contradict $\dot{\sigma} \geq 0$.  Note that this argument establishes that $\Gamma_{x_1} \subset \partial G_1$ and, moreover, it shows that no branch of $u(S)$ can lie in an open sector $C_{u(x), -\eps_0}$, as defined in Step 1.

\begin{figure}
\centering
\begin{minipage}{.5\textwidth}
  \centering
  \psfragscanon
\psfrag{a}{$L_{u(x)}$}
\psfrag{b}{$L_{u(x_1)}$}
\psfrag{c}{$u(y)$}
\psfrag{d}{$\Gamma_x$}
\psfrag{e}{$u(x)$}
\psfrag{f}[r]{$u(x_1)$}
\psfrag{g}[r]{$u(w)$}
\psfrag{h}{$\scriptscriptstyle{G'\arrowvert  0}$}
\psfrag{i}{$\scriptscriptstyle{\nu}$}
\psfrag{j}{$\scriptscriptstyle{\tau}$}
\psfrag{k}{$\scriptscriptstyle{\nu}$}
\psfrag{l}{$\scriptscriptstyle{\tau}$}
\psfrag{Q}{$Q_2$}
\psfrag{0}{$0$}
\psfrag{r}{$\scriptscriptstyle{\nu}$}
\psfrag{t}{$\scriptscriptstyle{\tau}$}
\psfrag{q}{$Q_1$}  
  \includegraphics[width=.9\linewidth]{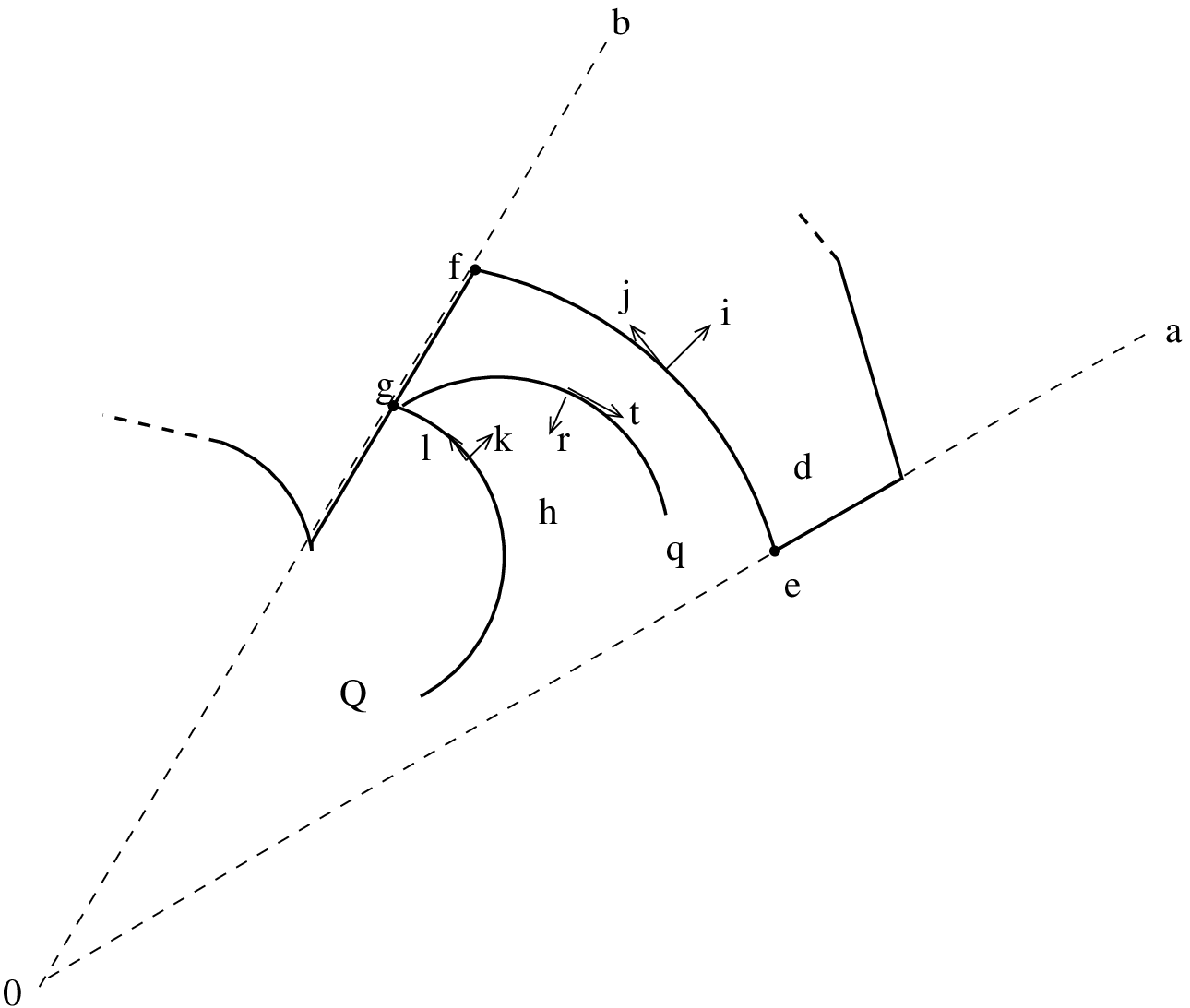}
  \caption{If $\restr{d}{G'}=0$ then $\sigma$ decreases along $Q_1$.}
  \label{Z2a}
\end{minipage}%
\begin{minipage}{.5\textwidth}
  \centering
\psfragscanon
\psfrag{a}{$L_{u(x)}$}
\psfrag{b}{$L_{u(x_1)}$}
\psfrag{c}{$u(y)$}
\psfrag{d}{$\Gamma_x$}
\psfrag{e}{$u(x)$}
\psfrag{f}[r]{$u(x_1)$}
\psfrag{g}[r]{$u(w)$}
\psfrag{h}{$\scriptscriptstyle{G'\arrowvert  1}$}
\psfrag{i}{$\scriptscriptstyle{\nu}$}
\psfrag{j}{$\scriptscriptstyle{\tau}$}
\psfrag{k}{$\scriptscriptstyle{\nu}$}
\psfrag{l}{$\scriptscriptstyle{\tau}$}
\psfrag{Q}{$Q_2$}
\psfrag{0}{$0$}
\psfrag{r}{$\scriptscriptstyle{\nu}$}
\psfrag{t}{$\scriptscriptstyle{\tau}$}
\psfrag{q}{$Q_1$}  
  \includegraphics[width=.9\linewidth]{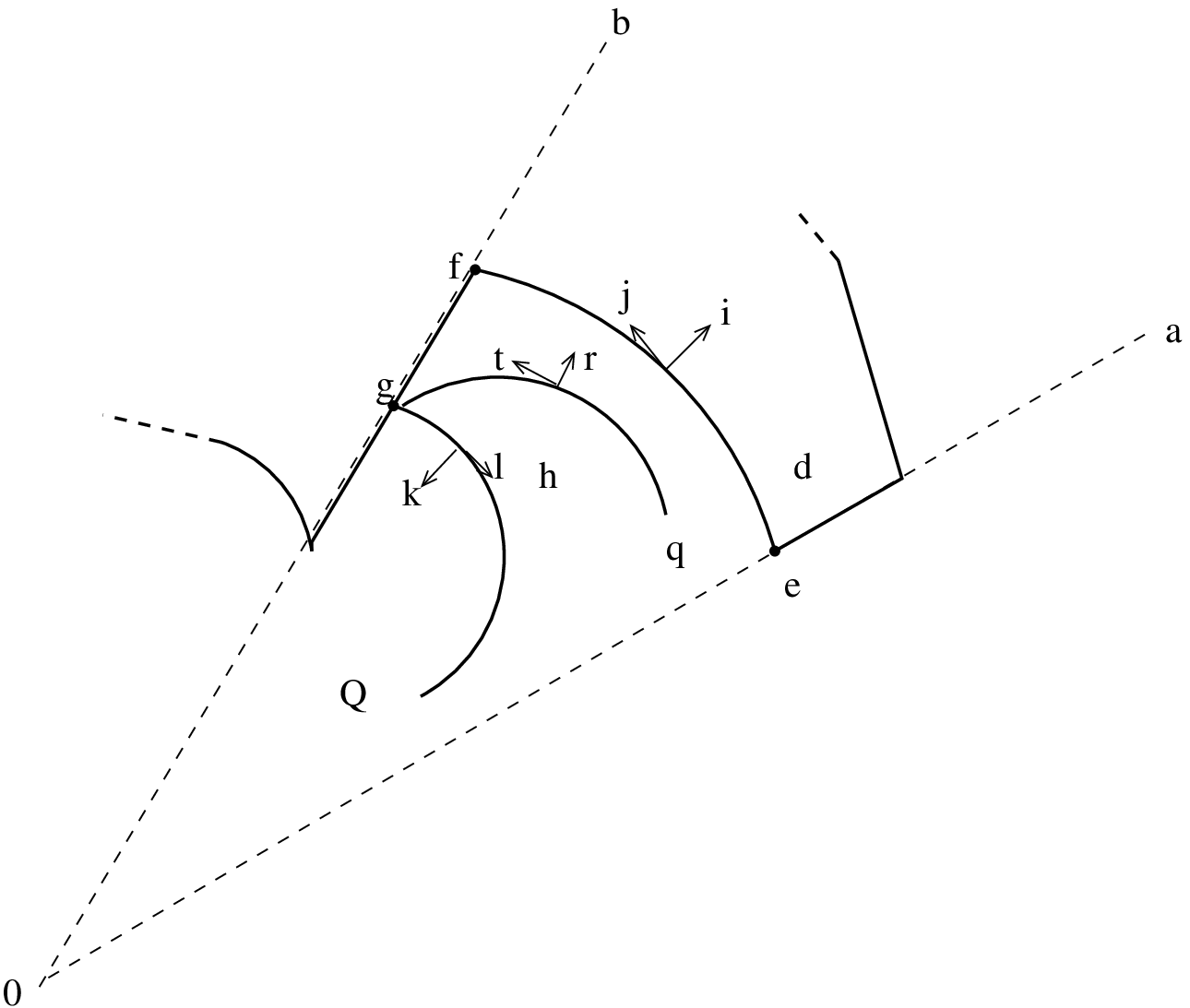}
  \caption{If $\restr{d}{G'}=1$ then $\sigma$ decreases along $Q_2$.}
  \label{Z2b}
\end{minipage}
\end{figure}

Define $u(x_2)$ as that point in $u(S) \cap \Gamma_{x_1}$ such that $|u(x_2)| \leq |z|$ for all $z \in \Gamma_{x_1}$.  Adjoin the interval $[u(x_2),u(x_1)]$ to $P_0$ and call the resulting curve $P_1$.  By the previous step, branches of $u(S)$ must leave $\Gamma_{x_1}$ by entering a cone of the form $C_{u(x_1),\eps_2}$, where $\eps_2 >0$.  Moreover, since $u: S \to u(S)$ is $1-1$ $\hone-$a.e., at least one of these branches must contain $u(x_2)$.  Therefore $P_1$ can be extended continuously from $u(x_2)$ by following the branch of $u(S)$ which pointwise minimizes its distance to $0$.   Iterating this process produces the desired curve $P(x)$, which by construction is contained in $\partial G_1$.
The method used to define $P(y)$ is so similar that we omit the details, apart from saying that the condition $P(y) \subset \partial G_1$ is not required to hold.

\vspace{1mm}
\noindent{\textbf{Step 3}}
Since $u(S)$ is connected it must be that $P(x)$ and $P(y)$ meet at some point $u(w)$, say.    Let $u(w)$ be the first such meeting point (relative to $x$ and $y$) in the sense that there is $\eps_{0} > 0$ such that  
\[\dist\left(L_{\Rot(-\eps)u(w)} \cap P(x), L_{\Rot(-\eps)u(w)} \cap P(y)\right) > 0 \ \ \ \ \forall \eps \in (0,\eps_{0}).\]
For definiteness, let $P(x)$ and $P(y)$ both terminate at $u(w)$.    A glance at Figure \ref{f1} may help to visualize the arrangement.  


  The degree $d$ is of compact support and is constant on each component, so it must in particular be that $\restr{d}{G_0}=0$ and $\restr{d}{G_{1}}=1$, where $G_0$ and $G_1$ were defined at the outset of Step 1.  Let $G$ be the component of $\R^2 \setminus u(S)$ such that for all sufficiently small $\gamma$ the sets $G \cap B(u(w),\gamma) \cap P(x)$ and $G \cap B(u(w),\gamma) \cap P(y)$ are nonempty.  The boundary of each component is contained in $u(S)$, which is of finite perimeter, so 
$\partial G$ is in particular $\hone$ measurable (and of finite $\hone$ measure).   By construction $\hone(\partial G \cap \partial G_1)>0$, so that necessarily $\restr{d}{G}=0$.   Moreover, $\hone(\partial G \cap P(y)) >0 $ together with $\restr{d}{G}=0$ implies there is a further component $G' \neq G_{0}$ such that $\restr{d}{G'}=1$ and for which $u(w) \in \partial G'$.   The local behaviour of the degree together with \eqref{j:tildenu} forces an orientation on the set $u(S)$ in a neighbourhood of $u(w)$, as shown in Fig \ref{f1}.  
\begin{figure}[ht]
\psfragscanon
\psfrag{g1}{$G \arrowvert 0$}
\psfrag{g2}{$G' \arrowvert 1$}
\psfrag{g3}{$G_1 \arrowvert 1$}
\psfrag{g}[r]{$u(w)$}
\psfrag{r}{$\scriptscriptstyle{\nu}$}
\psfrag{t}{$\scriptscriptstyle{\tau}$} 
\psfrag{py}{$P(y)$}
\psfrag{px}{$P(x)$}
\psfrag{L}{$L_{u(w)}$}
\psfrag{o}{$0$}
\includegraphics[width=0.6\textwidth]{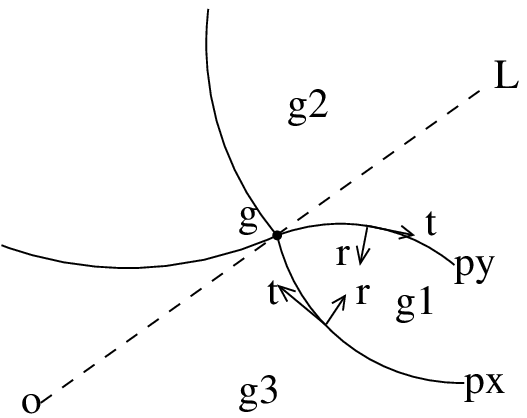}
\caption{Topological representation of the components of $\R^2 \setminus u(S)$ near the first meeting point $u(w)$ of $P(x)$ and $P(y)$.  The notation $G \, | \, n$ is used to indicate that the degree $d$ is $n$ on the component $G$.}
\label{f1}
\end{figure} 
Choosing $\omega:=[\alpha,\beta) \subset [0, 2\pi)$ so that $u(Re(\theta)) \in P(y)$ if $\theta \in \omega$, with $u(Re(\alpha))=u(w)$, it follows that $\sigma$ is decreasing on $\omega$, contradicting $\dot{\sigma} \geq 0$.   Thus part (ii) of Definition \ref{defstarshape} must hold, and we conclude that $u(S)$ is star-shaped.

\vspace{2mm}
\noindent($\mathbf{u(S)}$\textbf{ is star-shaped}$\ \Rightarrow \mathbf{t \geq 0}$).   Since part (i) of Definition \ref{defstarshape} holds we may assume that $\sigma$ has been chosen so that \eqref{j:sigmadef} and \eqref{j:sigmadot} apply.   Thus it is sufficient to show that $\dot{\sigma} \geq 0$ a.e. in $[0,2\pi)$.   Suppose for a contradiction that $\dot{\sigma} \geq 0$ does not hold a.e\frenchspacing. in $[0,2 \pi)$.    By reparametrizing we can suppose that $0$ is a Lebesgue point of $\dot{\sigma}$ and that $\dot{\sigma}(0) <0$.  Then $\sigma(\theta) < \sigma(0)$  if $\theta$ is sufficiently small and positive.    Now, either $\sigma$ is nonincreasing on the entire interval $[0,2\pi)$ or there is $\theta_{1} >0$ such that $\sigma(\theta) > \sigma(\theta_{1})$ if $\theta-\theta_{1}$ is sufficiently small and positive.   Without loss of generality we can suppose that $\theta_{1}$ is the first point in $(0,2 \pi)$ at which $\sigma$ fails to be nonincreasing.    In particular, for each sufficiently small $h$ there is $k(h)$ such that the sets 
\[\Lambda_{-h}  :=  L_{e(\sigma(\theta_{1}- h))} \cap u(S)\]
and
\[\Lambda_{k(h)}  :=  L_{e(\sigma(\theta_{1}+ k(h)))} \cap u(S)\]
are both contained in the same line.  According to part (ii) of Definition \ref{defstarshape}, the sets $L_{u(x)} \cap u(S)$ are connected for $\hone-$a.e. $x \in S$.   But this implies that $u$ fails to be $1-1$ on a set of positive $\hone$ measure, which  contradicts part (iv) of Proposition \ref{techprop1}.

Thus the only possibility is that $\sigma$ is nonincreasing on the whole interval $[0,2\pi)$.   But then the set $u(S)$ must be traversed clockwise with increasing $\theta$, and hence by \eqref{j:tildenu}, the generalized exterior normal points almost everywhere into the set $u(B(x_{0},R))$, which implies $\restr{d}{G_1}=0$, a contradiction.   Thus we conclude that $\dot{\sigma} \geq 0$ must hold almost everywhere.\qed
\end{proof}

\section{A variational inequality, positive twist and H\"{o}lder regularity}\label{prm}

In this section we introduce two functionals,  $E(u)$ and $F(u,\om',r')$:  the first measures the elastic stored energy of a deformation $u$ belonging to the class $\sca_1$ as defined in \eqref{a1}, while the second quantity has the key property that maps $u$ with $F(u,\om',r')=0$ have a.e. positive twist on balls of radius $r'$ centred at points of $\om'$. In view of the characterization of positive twist given in Section \ref{starshape}, $F(u,\om',r')=0$ encodes the condition that $u$ maps these balls to star-shaped sets on a `small scale'.    We regard $F(u,\om',r')=0$ as a condition which may or may not be satisfied rather than as a constraint to which all competing functions in $\sca_1$ are subjected\footnote{See Remark \ref{constrain} for more on this}.    The main result is that any local minimizer of $E$ such that $F(u,B(z,\delta),r')=0$ is H\"{o}lder continuous on any compact subset of $B(z,\delta) \subset \om$.    Thus, when it holds, the geometric condition involving star-shapedness translates into a means for proving regularity of the associated local minimizer of the energy $E$.  It does this by ensuring that (i) a certain useful class of variations is admissible, and (ii) the resulting variational inequality is suitably monotone in the sense that elliptic regularity theory can be applied to it.   Details of (i) and (ii) are given later in the section. 
\\[1mm]

\noindent {\bf{The functional}} $\mathbf{E(u)}$. 
It has previously been established that the stored energy of an elastic body occupying in a reference configuration the bounded domain $\om \subset \R^2$ can be expressed as
\begin{equation}\label{e1} E(u) = \int_{\om} W(\nabla u) \,dx. \end{equation}  
Here, the deformation $u$ belongs to $\sca_1$, as defined by \eqref{a1}.  We focus on a particular type of stored-energy function $W: \R \to [0,+\infty]$ given by
\[   W(A) = \lambda|A|^2 + h(\det A), \]
where $\lambda>0$ is a fixed constant and  the  function $h : \R \to [0,+\infty]$ satisfies 
\begin{align}\label{defh}
h(s) = & \left \{\begin{array}{l l} +\infty  & \ \textrm{if} \ s \leq 0  \\
|\ln s| & \ \textrm{if} \ s \in (0,c_1) \\
\theta(s)  & \ \textrm{if} \ s \in [c_1,c_2] \\
l s + m   & \ \textrm{if} \ s \in (c_2,+\infty).\end{array} \right.
\end{align}
The constants $l,m,c_1,c_2$ and the function $\theta: (c_1,c_2) \to (0,\infty)$ satisfy the following conditions:
\begin{itemize}
\item[(i)]  $0 < c_1 < 1 < c_2 < +\infty$;
\item[(ii)] $\theta \in C^{2}(c_1,c_2)$, with $\ddot{\theta} > 0$;
\item[(iii)] $\theta(c_1) = |\ln c_1|$ and $\theta(c_2)=l c_2 + m$;
\item[(iv)] $\dot{\theta}(c_1)=-\frac{1}{c_1}$ and $\dot{\theta}(c_2)= l$;
\item[(v)]  $\dot{\theta}(1)=0$ and $\theta(1)>0$.
\end{itemize}
In the region $s >0$ the main features of $h(s)$ are its logarithmic growth for $s$ small and positive, and linear growth for $s$ large and positive.   It is straightforward to check that when (i)-(v) hold the function $h$ is  $C^1$ and convex, with a unique global minimum of $\theta(1)$ at $1$.    The connecting function $\theta$ is of lesser importance, and there are many of these for which (i)-(v) hold.  For example, let $\theta_1$ be a given constant satisfying $\theta_1 > m+l$ and define
\begin{displaymath}\theta(s) = \left\{\begin{array}{l l} 1-\ln c_1 - \frac{s}{c_1} + \int_{c_1}^{s}(s-s')\psi_1(s')\,ds & \textrm{if} \ c_{1} \leq s \leq 1 \\
\theta_{1} + \int_{1}^{s}(s-s')\psi_2(s')\,ds &  \textrm{if} \ 1 \leq s \leq c_2.
\end{array}\right.\end{displaymath}
The positive, continuous functions $\psi_1$ and $\psi_2$ are subject to
\begin{align*} \int_{c_1}^{1} \psi_1(s') \,ds' & =  \frac{1}{c_1}  \\
 \int_{c_1}^{1} s'\psi_1(s') \,ds' & =  1-\theta_1 -\ln c_1 \\
\int_{1}^{c_2} \psi_2(s') \,ds' & =  l \\
 \int_{1}^{c_2} s'\psi_2(s') \,ds' &=  \theta_1 - m, 
\end{align*}
which can be satisfied by choosing $\psi_1$ and $\psi_2$ to be appropriate polynomials.\\[1mm]

We now define the class of admissible deformations by
\begin{equation}\label{defadmiss}
\sca:=\{u \in \sca_1: \ \ E(u) < +\infty\}.
\end{equation}

\noindent {\bf{The functional}} $\mathbf{F(u,\om',r')}$.
As advertised, $F$ will encode our notion of `positive twist'.   To motivate its construction, suppose for argument's sake that $\sca$ contains at least one diffeomorphism, $v$, say, where, in particular, $\det \nabla v > 0$ in $\om$.   By Proposition \ref{diffeo}, we can assert that for each $x_0$ in $\om$ there is some radius $r(x_0) >0$ such that 
\[t(x,x_0,v) \geq \frac{1}{2}\det \nabla v(x_0) \, |x-x_0|  \ \textrm{ if } x \in B(x_0,r(x_0)).\]
Here, $r(x_0) < \dist(x_0,\partial \om)$;  note also that because $v$ is $C^1$ we can suppose $r(\cdot)$ is bounded uniformly below on compact subsets $\om'$, say, of $\om$.   Let the constant $r'$ satisfy $0 < r' \leq \inf \{r(x_0): \ x_0 \in \om'\}$. 

Now let the function $g: \R \to [0,+\infty)$ be given by
\begin{displaymath}g(s) = \left\{\begin{array}{l l} -s & \textrm{ if } \ s \leq 0 \\ 0 & \textrm{ if } \ s \geq 0. \end{array}\right.
 \end{displaymath} 
In view of the inequality satisfied by $t(x,x_0,v)$ above, it is clear that
\[ F(u,\om',r'):=\int_{\om'}\int_{B(x_0,r')} g(t(x,x_0,u))\,dx \,dx_0\]
satisfies $F(u,\om',r')=0$.    The definition of $F$ below generalizes this idea to other admissible functions $u$.   Thus to each $u \in \sca_1$, each set $\om'$ which is compactly contained in $\om$ and each fixed $r' \in (0,\dist(\om',\partial \om))$ we associate the functional
  \begin{equation}\label{defF} F(u,\om',r') := \int_{x_0 \in \om'}\int_{B(x_0, r')} g(t,x,x_0,u)\,dx \,dx_0.\end{equation}
It can easily be checked that $g(t(x,x_0,u))$ is integrable on the set indicated, and that $F(u,\om',r') \geq 0$.   

In the next section we will consider local minimizers of the functional $E$.  To prove that at least one of these exists we appeal to the following well-known result concerning the existence of a  global minimizer of $E$ in $\sca$.   Since any global minimizer is in particular a local minimizer, the result clearly establishes the existence of the latter.

\begin{theorem}\emph{(}\cite[Theorem 6.1]{BM84}\emph{)}  Let the functional $E(u)$ be given by \eqref{e1} and $\sca$ by \eqref{defadmiss}.    Then there exists a minimizer of $E$ in $\sca$.
\end{theorem}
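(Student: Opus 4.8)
The statement is \cite[Theorem 6.1]{BM84} verbatim, so the plan is not to reprove it from scratch but to record why the hypotheses of that theorem are met by the present $W(A)=\lambda|A|^2+h(\det A)$, and, for completeness, to indicate the direct-method argument that underlies it. First I would note that $\sca\neq\emptyset$ is built into the set-up (it holds as soon as the boundary homeomorphism $u_0$ admits one extension of finite energy), so $\mu:=\inf_{\sca}E$ is finite and nonnegative, and then fix a minimizing sequence $(u_k)\subset\sca$ with $E(u_k)\to\mu$.

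Compactness comes from coercivity: since $W(A)\ge\lambda|A|^2$, the gradients $(\nabla u_k)$ are bounded in $L^2(\om,\R^{2\times 2})$, and together with the fixed trace $\restr{u_k}{\partial\om}=\restr{u_0}{\partial\om}$ and Poincar\'e's inequality this bounds $(u_k)$ in $W^{1,2}(\om,\R^2)$; passing to a subsequence, $u_k\rightharpoonup u$ weakly in $W^{1,2}$. The set $\{v\in W^{1,2}(\om,\R^2):\restr{v}{\partial\om}=\restr{u_0}{\partial\om}\}$ is convex and strongly closed, hence weakly closed, so $u$ inherits the boundary data.

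The core of the argument is the lower semicontinuity of $E$ combined with the constraint $\det\nabla u>0$. For the term $\lambda\int_\om|\nabla u|^2$, weak $L^2$-lower semicontinuity of the convex integrand is standard. For the determinant term I would use that, in two dimensions, $\det\nabla u_k\to\det\nabla u$ in $\mathcal D'(\om)$ whenever $u_k\rightharpoonup u$ in $W^{1,2}$: write the Jacobian in divergence form $\det\nabla u=\partial_1(u_1\,\partial_2 u_2)-\partial_2(u_1\,\partial_1 u_2)$, use the compact embedding $W^{1,2}\hookrightarrow\hookrightarrow L^q$ for the factor $u_1$ and weak $L^2$ convergence for $\nabla u_2$, and recall that for $W^{1,2}$ maps of the plane the distributional Jacobian agrees with the pointwise $\det\nabla u$. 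Since $h$ is convex and bounded below, and $\int_\om h(\det\nabla u_k)\le E(u_k)$ is bounded, a lower-semicontinuity theorem for convex functionals along a $\mathcal D'$-convergent sequence (equivalently, Ball's lower semicontinuity theorem for polyconvex integrands) gives $\int_\om h(\det\nabla u)\le\liminf_k\int_\om h(\det\nabla u_k)$. Summing, $E(u)\le\liminf_k E(u_k)=\mu$; in particular $E(u)<+\infty$, and since $h(s)=+\infty$ for $s\le 0$ this forces $\det\nabla u>0$ a.e., so $u\in\sca$ and $E(u)=\mu$.

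The one genuinely delicate point --- and the reason the existence is attributed to \cite{BM84} rather than to Ball's original theorem \cite{Ba77} --- is that $h$ grows only linearly at $+\infty$, placing the determinant term in the borderline case $p=n=2$. There a naive argument yielding weak $L^1$-convergence of $\det\nabla u_k$ fails, since the Jacobians need not be equi-integrable and concentration onto a singular measure is a priori possible; one must argue that no such concentration occurs, or run the lower-semicontinuity estimate at the level of measures using the bound $h(s)\ge ls+m$ (valid for all $s>0$ by convexity, since $h'\le l$ on $(0,c_2)$ and $h(s)-ls\equiv m$ for $s>c_2$). Making this rigorous in the $W^{1,n}$ setting is precisely the technical content of \cite[Theorem 6.1]{BM84}, which I would simply quote.
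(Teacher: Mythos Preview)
The paper gives no proof of this statement at all: it is stated as a citation of \cite[Theorem 6.1]{BM84} and left at that. Your proposal therefore does strictly more than the paper does, namely it verifies that the hypotheses of that theorem are met by the present $W$ and sketches the underlying direct-method argument; the sketch is correct, including the observation that the borderline case $p=n=2$ with only linear growth of $h$ at $+\infty$ is precisely why one invokes \cite{BM84} rather than \cite{Ba77}.
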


\subsection{A variational inequality} \label{elineq}

We begin by introducing a useful class of outer variations 
$\{u(x;x_0,\eps): \ \eps < 0, \ x_0 \in \om\}$ about a given $u$ in $\sca$.  The variations themselves are standard from the point of view of classical regularity theory in that they are of the form
\begin{equation}\label{defvar} \uex = u(x) + \eps \eta^2(x;x_0)(u(x)-u(x_0)),\end{equation}
where $\eps$ is a small parameter, the scalar function $\eta(\cdot;x_0)$ takes values in $[0,1]$ and is supported in a small ball about a given $x_0$ in $\om$.     The technical distinction needed in the elasticity setting is that we also require $\uex$ to belong to $\sca_1$, which necessarily implies that $\det \nabla_{x} \uex > 0$ must hold almost everywhere in $\om$.   The latter can be achieved by imposing on $u$ a condition of the form $F(u,B(z,\delta),r')=0$, where $B(z,r) \subset \om$, and by choosing the support of $\eta$ to be sufficiently small.   Indeed, if $t(x,x_0,u) \geq 0$ for $x \in B(x_0,r')$ and $x_0 \in B(z,\delta)$, it follows from the calculations given in Proposition \ref{p:basic-calc} that $\det \nabla_x \uex \geq \det \nabla_{x} u(x) /4 $ on all of $\om$ provided $\textrm{spt\,}\eta(\cdot,x_0)$ is contained in $B(x_0, r')$.  Hence, by properties of the stored-energy function $W$,  $E(u(\cdot,x_0,\eps)) < +\infty$.

Let us assume that 
\begin{equation}\label{Fzero}  F(u,B(z,\delta),r')=0\end{equation}
for some ball $B(z,\delta)$ and some $r' > 0$.   It follows that 
\begin{equation}\label{tpos} t(x,x_0,u) \geq 0 \ \textrm{ for a.e. } x \in B(x_0,r')  \textrm{ and a.e.} \ x_0 \in B(z,\delta).\end{equation} 
Fix $x_0 \in B(z,\delta)$ for which the inequality in \eqref{tpos} holds at a.e\frenchspacing. $x$ in $B(x_0,r')$.
Let $R=|x-x_0|$ and let $\eta(\cdot;x_0): \om \to [0,1]$ satisfy 
\begin{equation}\label{defeta} \eta(x,x_0)=f(R), \end{equation}
where $0 < 2r < r'$ and $f:[0,\infty) \to [0,1]$ is a smooth function satisfying
\begin{align}\label{F1} f(R)  &= \left\{\begin{array}{l l} 1 & \textrm{if} \  0 \leq R \leq r \\ 
0 &  \textrm{if} \   R \geq 2r \end{array}\right.
\end{align}
and, for some constant $c$ independent of $x_0$, $r$ and $R$,  
\begin{equation}\label{F2} |f'(R)|\leq c/r \ \ \textrm{for} \ r < R < 2r.\end{equation}   
We further assume that $f'(R) \leq 0$ for all $R \in (r,2r)$.   Set $f(R)=0$ if $R \geq 2r$.   In particular,
\begin{equation}\label{fprime}f'(R) \leq 0 \ \ \textrm{for all} \  0 <R < \infty.\end{equation}
Let $\eps <0$ and recall the definition of $\uex$ given in \eqref{defvar}, which in terms of $a:=u(x_0)$ reads
\begin{equation}\label{ueps}\uex= u(x) + \eps \eta^2(x,x_0) (u(x) - a).\end{equation}

To make the notation less cumbersome we will denote $\uex$ and $\eta(x;x_0)$ by $\ue(x)$ and $\eta(x)$ respectively.  We now gather together some expressions involving $\nabla \ue$, $\det\nabla \ue$ and $t(x,x_0,\ue)$. 
\begin{proposition}\label{p:basic-calc}Let $u$ belong to $\sca$, let $x_0$ be such that \eqref{Fzero} holds,  and define $\ue$ by \eqref{ueps}.   Then, with $R:=|x-x_0|$, $a=u(x_0)$, $e(x)=(x-x_0)/R$ and $-1/2 < \eps \leq 0$, 
\begin{align}\label{blaue} \nabla \ue & = (1+\eps \eta^2) \nabla u + \eps (u-a) \otimes \nabla (\eta^2), \\
 \label{detblaue1}
 \det \nabla \ue & = (1+\eps\eta^2)^2 \det \nabla u + 2 \eps f'(R)  \eta (1+\eps\eta^2) t(x,x_0,u)
\end{align}
In particular, 
\begin{align}\label{dquarter} \det \nabla u \geq \det \nabla \ue  & \geq \frac{\det \nabla u}{4} \ \ \textrm{a.e\frenchspacing. on} \  \om.\end{align} 
\end{proposition}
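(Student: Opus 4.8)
The plan is to derive the two identities \eqref{blaue} and \eqref{detblaue1} by an explicit calculation, and then to extract \eqref{dquarter} by estimating the two summands on the right of \eqref{detblaue1}.

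Formula \eqref{blaue} is immediate: since $a=u(x_0)$ is a fixed vector and $\eta(x)=f(R)$ with $R=|x-x_0|$, so that $\nabla(\eta^2)=2f(R)f'(R)\,e(x)$, differentiating $\ue=u+\eps\eta^2(u-a)$ with the product rule gives $\nabla\ue=(1+\eps\eta^2)\nabla u+\eps(u-a)\otimes\nabla(\eta^2)$ at a.e.\ $x$, once $\nabla u$ is identified with its approximate derivative as in the proof of Lemma \ref{starshape1}. For \eqref{detblaue1} I would pass to the polar frame about $x_0$. The key simplification is that $\ue-a=(1+\eps\eta^2)(u-a)$; setting $\phi(R):=1+\eps f(R)^2$ and using the decomposition $\nabla u=u_R\otimes e+u_\tau\otimes Je$ from the proof of Lemma \ref{starshape1} (with $e=e(x)$ and $u_\tau=\frac1R\partial_\theta u$), one reads off that the radial and tangential parts of $\nabla\ue$ are $\phi u_R+\phi'(u-a)$ and $\phi u_\tau$, i.e.\ $\nabla\ue=\phi\,\nabla u+\phi'(u-a)\otimes e$. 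Taking determinants via the elementary relation $\det(p\otimes e+q\otimes Je)=Jp\cdot q$ for the positively oriented orthonormal pair $\{e,Je\}$, and recalling both $\det\nabla u=Ju_R\cdot u_\tau$ and $t(x,x_0,u)=J(u-a)\cdot u_\tau$ (again from the proof of Lemma \ref{starshape1}), yields $\det\nabla\ue=\phi^2\det\nabla u+\phi\phi'\,t(x,x_0,u)$; since $\phi'=2\eps ff'=2\eps f'\eta$, this is exactly \eqref{detblaue1}. Equivalently one could stay in Cartesian coordinates, apply the rank-one determinant identity $\det(A+p\otimes q)=\det A+q\cdot(\adj A)\,p$ to \eqref{blaue} together with the $2\times2$ homogeneities $\det(\lambda A)=\lambda^2\det A$ and $\adj(\lambda A)=\lambda\,\adj A$, and recognise $e(x)\cdot\adj \nabla u(x)(u(x)-a)$ as $t(x,x_0,u)$ by Definition \ref{deftwist}.

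Then \eqref{dquarter} follows from \eqref{detblaue1} by estimating its two terms. Since $\eta$ takes values in $[0,1]$ and $-\frac12<\eps\le0$, we have $\frac12<1+\eps\eta^2\le1$, hence $\frac14<(1+\eps\eta^2)^2\le1$. The cross term $2\eps f'(R)\eta(1+\eps\eta^2)\,t(x,x_0,u)$ is nonnegative at a.e.\ $x$: indeed $2\eps f'(R)\ge0$ because $\eps\le0$ and $f'(R)\le0$ by \eqref{fprime}; $\eta\ge0$; $1+\eps\eta^2>0$; and $t(x,x_0,u)\ge0$ for a.e.\ $x$ on the ball supporting $\eta$, because $\textrm{spt}\,\eta(\cdot,x_0)\subset B(x_0,2r)\subset B(x_0,r')$ and \eqref{tpos} holds. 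Combining these with $\det\nabla u>0$ a.e.\ (valid since $u\in\sca$), \eqref{detblaue1} gives $\det\nabla\ue\ge(1+\eps\eta^2)^2\det\nabla u\ge\frac14\det\nabla u$ a.e.; this is the lower bound, which is precisely what is needed later to guarantee $E(\ue)<+\infty$. For the upper bound, the leading term of \eqref{detblaue1} already satisfies $(1+\eps\eta^2)^2\det\nabla u\le\det\nabla u$, and the remaining cross term is controlled using the bound $|f'|\le c/r$ of \eqref{F2} on the annulus $r\le|x-x_0|\le2r$, which alone carries $\nabla\eta\ne0$, together with the boundedness of $u-a$ there.

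I do not anticipate a genuine obstacle: everything reduces to one careful determinant computation — getting the rank-one identity, the two scaling factors, and the identification of the cross term with $t$ all correct at once — plus some straightforward sign bookkeeping for \eqref{dquarter}. The a.e.\ qualifications are handled as usual, by working with the approximate derivative of $u$ and discarding the null set $\{\det\nabla u\le0\}$.
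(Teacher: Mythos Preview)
Your derivations of \eqref{blaue}, \eqref{detblaue1}, and the lower bound in \eqref{dquarter} are correct and match the paper's approach exactly: the paper also obtains \eqref{detblaue1} via the rank-one perturbation formula (it writes $\cof\nabla u\cdot((u-a)\otimes e)$ and then identifies this with $t(x,x_0,u)$), and for the lower bound it makes precisely your observation that the cross term is a.e.\ nonnegative while the first term is $\ge \det\nabla u/4$.

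The gap is in your argument for the upper bound $\det\nabla\ue\le\det\nabla u$. You say the leading term is $\le\det\nabla u$ and then that ``the remaining cross term is controlled using the bound $|f'|\le c/r$\ldots together with the boundedness of $u-a$ there.'' But the cross term $2\eps f'(R)\eta(1+\eps\eta^2)t$ is \emph{nonnegative}, as you yourself just established; bounding its magnitude does nothing to push $\det\nabla\ue$ back below $\det\nabla u$. In fact the upper bound as stated fails: take $u(x)=x$, $x_0=0$, so $\det\nabla u\equiv1$ and $t=R$; then at a point in the annulus where, say, $f=1/2$, $f'\approx -1/r$ and $R\approx 3r/2$, one computes $\det\nabla\ue=(1+\eps/4)^2-(3\eps/2)(1+\eps/4)$, which for $\eps=-0.4$ gives about $1.35>1$. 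The paper's own proof does not address the upper inequality at all---it proves only the lower bound and then writes ``Thus \eqref{dquarter}''---and a scan of the sequel (in particular Lemma~\ref{l:maxhprime}) shows that only the lower bound $\det\nabla\ue\ge\det\nabla u/4$ is ever used. So the right fix is simply to drop the upper inequality from the statement, not to try to prove it.
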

\begin{proof} 
A short calculation gives \eqref{blaue} and 
\[\det \nabla \ue  = (1+\eps\eta^2)^2 \det \nabla u + 2 \eps f'(R) \eta  (1+\eps\eta^2)\, \cof \nabla u \cdot \big((u-a) \otimes e\big).\]
Recalling \eqref{twist}, the term  $\cof \nabla u \cdot \big((u-a) \otimes e\big)$ in the latter is exactly $t(x,x_0,u)$, which gives \eqref{detblaue1}. 

To prove \eqref{dquarter} note that the assumptions \eqref{fprime}, $\eps \leq 0
$ and \eqref{Fzero} imply (via \eqref{tpos}) that  $\eps f'(R) t(x,x_0,u) \geq 0$ a.e\frenchspacing. on $B(x_0,2r)$.  In particular, the second term in \eqref{detblaue1} is a.e\frenchspacing. nonnegative, while the first is easily seen to be bounded below by $\det \nabla u/4$.   Thus \eqref{dquarter}.   \qed
\end{proof}

\begin{remark}\label{constrain}
It is possible to show that the functional $F(u,\om',r')$ is lower semicontinuous with respect to weak convergence in $W^{1,2}$ for fixed $\om'$ and $r'$ (see Lemma \ref{kswlsc}).   In particular, if $u^{(j)} \rightharpoonup u$ in $W^{1,2}$, if $F(u^{(j)},\om',r')=0$ for all $j$, where each $u^{(j)}$ belongs to $\sca$, and since $F$ is nonnegative, then $F(u,\om',r') =0$.   The existence of a minimizer $u$ of $E$ in the restricted class
\[\sca^{F}:=\{ u \in \sca_1: \ F(u,\om',r')=0\}\] 
is then, in conjunction with \cite[Theorem 6.1]{BM84}, not difficult to show.    We note that $\sca^F$ is nonempty provided one assumes that $\sca_1$ contains at least one diffeomorphism for which $F(v,\om',r')=0$:  see Proposition \ref{diffeo} and the construction of $F$ for the details.  As the results of Section \ref{starshape} show, the constraint $F(u,\om',r')=0$ translates into the condition that the minimizer of $E$ in $\sca^F$ maps sufficiently small circles centred at points $z$ of $\om'$ to star-shaped sets relative to $u(z)$.   The catch is that in order to exploit this minimality we require not only that $\ue \in \sca_1$, which is assured by $F(u,\om',r')=0$, but also that $F(\ue)=0$ in order that $\ue \in \sca^F$.    The latter condition does not seem to hold, and so one cannot conclude that $E(\ue) \geq E(u)$.  \end{remark}

 We now focus on deriving a variational inequality under the assumption that
the relative twist $t(x,x_0,u)$ is nonnegative a.e. on a ball about $x_0$.  As we have seen, this is certain to be the case for a.e\frenchspacing. $x_0$ in $B(z,\delta)$ provided $F(u,B(z,\delta),r')=0$ for some $r'>0$.  In the following we use the shorthand notation \[d_u (x):=\det \nabla u(x)\] and \[t(x):=t(x,x_0,u).\]   

\begin{lemma}\label{l:maxhprime}  Let the function $h: \R \to [0,+\infty]$ be given by \eqref{defh} and assume that $u \in \sca$.  
Assume futher than $t(x) \geq 0$ for a.e\frenchspacing. $x$ in $B(x_0,r')$ for some $r' > 0$.   Let $\ue$ be given by \eqref{ueps}, where $\eps <0$. Then $\ue \in \sca$,  
\begin{equation}\label{e:maxhprime} \max\{|h'(s)|: \ \ \min\{d_u (x), d_{\ue} (x)\} \leq s \leq 
 \max\{d_u (x), d_{\ue} (x)\}\} \leq \max\left\{l,\frac{4}{d_u}\right\},  
\end{equation}
and
\begin{equation}\label{ub2} \left|\frac{h(d_{\ue}) - h(d_{u})}{\eps}\right| \leq  \max\left\{l,\frac{4}{d_u (x)}\right\}\big((2\eta^2 +1/2)d_u + 2|f'(R)|t \eta \big),
\end{equation}
the right-hand side of \eqref{ub2} being independent of $\eps$.
\end{lemma}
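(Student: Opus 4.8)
The plan is to establish the three claims in sequence, since each feeds the next. First I would note that $\ue \in \sca$: by Proposition \ref{p:basic-calc}, the hypothesis $t(x) \geq 0$ on $B(x_0,r')$ (together with $\eps<0$, $f'\leq 0$) yields $d_{\ue} \geq d_u/4 > 0$ a.e., so $\det\nabla\ue > 0$ a.e.; since $\ue$ agrees with $u$ (hence with $u_0$) on $\partial\om$ and differs from $u$ only on a compact subset of $\om$, it lies in $\sca_1$, and finiteness of $E(\ue)$ follows from $d_u/4 \leq d_{\ue} \leq d_u$ and the structure of $h$ near $0$ and $\infty$ — specifically $h$ is decreasing on $(0,1)$ with $h(s)=|\ln s|$ there, so $h(d_{\ue}) \leq h(d_u/4) \leq h(d_u) + \ln 4$ pointwise where $d_u < c_1$, and $h$ grows at most linearly for large arguments while $d_{\ue}\leq d_u$, so $h(d_{\ue}) \in L^1$.

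For \eqref{e:maxhprime}, I would examine $h'$ on the interval $[\min\{d_u,d_{\ue}\}, \max\{d_u,d_{\ue}\}] = [d_{\ue}, d_u]$ (using $d_{\ue}\leq d_u$). On $(0,c_1)$, $h'(s) = -1/s$, so $|h'(s)| = 1/s \leq 1/d_{\ue} \leq 4/d_u$; on $(c_2,+\infty)$, $|h'(s)| = l$; and on $[c_1,c_2]$, convexity and the $C^1$ matching conditions (iv) force $\dot\theta$ to lie between $-1/c_1$ and $l$, hence $|\dot\theta(s)| \leq \max\{1/c_1, l\} \leq \max\{4/d_u, l\}$ whenever $d_u \geq c_1$ (and if $d_u < c_1$ the whole interval $[d_{\ue},d_u]$ lies in $(0,c_1)$ so only the first case applies; the mixed case where $d_{\ue} < c_1 \leq d_u$ is handled by taking the max over the relevant pieces). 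Combining the cases gives \eqref{e:maxhprime}.

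For \eqref{ub2}, I would apply the mean value form $h(d_{\ue}) - h(d_u) = h'(\xi)(d_{\ue} - d_u)$ for some $\xi$ between $d_{\ue}$ and $d_u$ (valid since $h\in C^1$ on $(0,\infty)$ and the interval avoids $0$), so that the left side of \eqref{ub2} is bounded by $\max\{l, 4/d_u\} \cdot |d_{\ue}-d_u|/|\eps|$. It then remains to estimate $|d_{\ue}-d_u|$. From \eqref{detblaue1},
\[
d_{\ue} - d_u = \big((1+\eps\eta^2)^2 - 1\big)d_u + 2\eps f'(R)\eta(1+\eps\eta^2)\,t,
\]
and $(1+\eps\eta^2)^2 - 1 = \eps\eta^2(2 + \eps\eta^2)$, whose modulus is at most $2|\eps|\eta^2$ for $-1/2<\eps\leq 0$ (indeed $|2+\eps\eta^2| \leq 2$); and $|1+\eps\eta^2| \leq 1$. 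Hence $|d_{\ue}-d_u| \leq 2|\eps|\eta^2 d_u + 2|\eps||f'(R)|\eta\,t$, and dividing by $|\eps|$ gives $2\eta^2 d_u + 2|f'(R)|t\eta$. I would absorb the extra $\tfrac12 d_u$ in \eqref{ub2} trivially (the stated bound is not tight), giving the displayed right-hand side, manifestly independent of $\eps$.

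The only mild subtlety — the point I would treat most carefully — is the case analysis in \eqref{e:maxhprime} when the interval $[d_{\ue}, d_u]$ straddles $c_1$ or $c_2$, so that $h'$ is evaluated across different pieces of the definition of $h$; since the bound is stated as a maximum over the whole interval, one simply checks each piece separately against $\max\{l, 4/d_u\}$, using that $4/d_u \geq 4/d_{\ue}\cdot(d_{\ue}/d_u) $ is compatible and that $d_u \geq c_1$ forces $1/c_1 \leq 1/d_u \cdot (d_u/c_1)$, which needs $d_u \leq 4$ — but this always holds on the support of the variation unless $d_u$ is large, in which case $h'=l$ governs; I would phrase this cleanly by splitting on whether $d_u < 4$ or not, or more simply by noting $\min\{d_u,d_{\ue}\}=d_{\ue}\geq d_u/4$ so $1/d_{\ue}\leq 4/d_u$ uniformly, which disposes of the logarithmic piece directly. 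Everything else is routine.
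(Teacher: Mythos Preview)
Your approach is essentially the paper's. For $\ue\in\sca$ and for \eqref{ub2} you argue just as the paper does; the paper bounds the quotient via $\int_m^M |h'|\,ds \leq (\max|h'|)\cdot|M-m|$ rather than the mean value theorem, and its estimate $|(1+\eps\eta^2)^2-1|/|\eps|\leq 2\eta^2+\tfrac12$ is slightly looser than your $\leq 2\eta^2$, but these are the same in substance. For \eqref{e:maxhprime} the paper's argument is tidier than your piece-by-piece case analysis: since $h'$ is nondecreasing by convexity, $|h'|$ on $[m,M]$ attains its maximum at an endpoint, so one only has to bound $|h'(m)|$ (using $m=d_{\ue}\geq d_u/4$) and $|h'(M)|$ (by $l$), avoiding your bookkeeping over the separate pieces of $h$. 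The point you flag as a ``mild subtlety'' --- whether $1/c_1\leq 4/d_u$ when the interval meets the $\theta$-range --- is a loose end the paper also glosses over (it asserts $|h'(m)|\leq 1/m$ for $m<1$ without justifying this on $[c_1,1)$, which the hypotheses on $\theta$ do not obviously force); since the lemma is only applied in Lemma~\ref{ellipt1} on the set $\{d_u\geq\delta_0\}$, the precise constant is immaterial and replacing $\max\{l,4/d_u\}$ by $\max\{l,1/c_1,4/d_u\}$ would remove the issue in both arguments.
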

\begin{proof}
The assertion that $\ue \in \sca$ involves showing that $\ue \in \sca_1$ and that $E(\ue)$ is finite.   The former holds easily; the latter can be checked by using \eqref{dquarter} and the definition of $h$ given in \eqref{defh}.

Let $m:=\min\{d_u (x), d_{\ue} (x)\}$ and  $M:=\max\{d_u (x), d_{\ue} (x)\}$.  To prove \eqref{e:maxhprime} we begin by observing that, by convexity, the maximum of $|h'|$ on the interval $[m,M]$ is either $|h'(m)|$ or $|h'(M)|$.    If $m<1$ then $|h'(m)| \leq \max\{ 1/d_u, 1/d_{\ue}\} <4 / d_u$, where the last inequality follows from \eqref{dquarter}.   If $M \leq 1$ then $|h'(M)| \leq |h'(m)| < 4/d_u$, while if $M\geq 1$ then $|h'(M)| \leq l$, and so \eqref{e:maxhprime} holds when $m<1$.  When $m\geq 1$ we easily have $0 \leq h'(m) \leq h'(M) \leq l$, and so \eqref{e:maxhprime} again holds.

Inequality \eqref{ub2} results from an application of \eqref{e:maxhprime} to the following:

\begin{align*}  \left|\frac{h(d_{\ue}) - h(d_{u})}{\eps}\right| & \leq \int_{m}^{M} \left|\frac{h'(s)}{\eps}\right| \,ds \\
& \leq  \max\left\{l,\frac{4}{d_u (x)}\right\}\left|\frac{M-m}{\eps}\right| \\
& \leq  \max\left\{l,\frac{4}{d_u (x)}\right\}\left( \left|\frac{(1+\eps\eta^2)^2 - 1}{\eps}\right|d_u
+ 2 \eta (1+\eps \eta^2)|f'(R)| t\right) \\
& \leq  \max\left\{l,\frac{4}{d_u (x)}\right\}\left((2\eta^2+1/2)d_u
+ 2 \eta |f'(R)| t \right).
\end{align*} \qed
\end{proof}

The preceding estimates are needed to calculate a bound on the quantity 
\[ \limsup_{\eps \to 0} \frac{E(\ue)-E(u)}{\eps} \] 
which appears in the variational principle set out in Theorem \ref{t1}.   

\begin{lemma}\label{ellipt1}  Let $E$ be defined by \eqref{e1} and let $u \in \sca$ be a strong local minimizer of $E$ in $\sca$ in the sense that there is $\gamma > 0$ such that 
\begin{equation}\label{wlm}  E(v) \geq E(u)  \ \ \forall \ v \in \sca \textrm{ s.t. } ||v-u||_{\infty;\om} < \gamma.\end{equation}
Assume that $F(u,B(z,\delta),r')=0$ and that $x_0 \in B(z,\delta)$ is such that $t(x) \geq 0$ for a.e. $x \in B(x_0,r')$. 
Then
\begin{align} \nonumber   \lambda \int_{\om} \left(\eta^2 |\nabla u|^2+ \nabla u \cdot ((u-a) \otimes \nabla (\eta^2)) \right) & \leq \int_{\{x \in \om: \ d_u (x) < 1\}} \eta^2 |h'(d_u)| d_u \,dx  \\
& \label{reg1} + \int_{\{x \in \om: \ d_u (x) \geq  1\}} \eta t h'(d_u) |f'(R)| \,dx \end{align}
where $x_0  \in \om$,  $\eta: \om \to [0,1]$ is given by $\eta(x):=f(|x-x_0|)$, and $f$ satisfies \eqref{F1}, \eqref{F2} and \eqref{fprime}
\end{lemma}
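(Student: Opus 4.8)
\textbf{Proof proposal for Lemma \ref{ellipt1}.}

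The plan is to exploit the minimality inequality \eqref{wlm} with the specific variations $\ue$ constructed in \eqref{ueps}. First I would observe that, by Lemma \ref{l:maxhprime}, we have $\ue \in \sca$ for every $\eps < 0$ with $|\eps|$ small, and that $\|\ue - u\|_{\infty;\om} \le |\eps|\,\|u-a\|_{\infty;B(x_0,2r)}$, so for $|\eps|$ sufficiently small the local minimality condition applies and gives $E(\ue) \ge E(u)$. Dividing by $\eps < 0$ and letting $\eps \nearrow 0$ yields
\[ \limsup_{\eps \nearrow 0} \frac{E(\ue) - E(u)}{\eps} \le 0. \]
The rest of the proof is the computation of this $\limsup$, split into the quadratic part $\lambda\int_\om |\nabla\ue|^2 - |\nabla u|^2$ and the determinant part $\int_\om h(d_{\ue}) - h(d_u)$.

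For the quadratic term I would use \eqref{blaue} to expand $|\nabla \ue|^2 = |(1+\eps\eta^2)\nabla u + \eps (u-a)\otimes\nabla(\eta^2)|^2$; dividing the difference by $\eps$ and taking $\eps\to 0$ the $O(\eps)$ terms vanish and one is left with $2\lambda\int_\om(\eta^2|\nabla u|^2 + \nabla u\cdot((u-a)\otimes\nabla(\eta^2)))$, which is exactly (twice) the left-hand side of \eqref{reg1} — the factor $2$ is harmless since the inequality is against $0$, but to match the stated form I would simply carry the $\frac12$ through, or note the displayed inequality already absorbed it. For the determinant term, the key input is the dominated convergence theorem: by \eqref{ub2} the difference quotients $(h(d_{\ue})-h(d_u))/\eps$ are bounded in absolute value, uniformly in $\eps$, by the fixed $L^1$ function $\max\{l,4/d_u\}((2\eta^2+1/2)d_u + 2|f'(R)|t\eta)$ (integrability of this majorant on $B(x_0,2r)$ follows since $h(d_u)\le E(u)/\lambda$-type bounds make $1/d_u$ integrable where $d_u<1$ via $|\ln d_u| = h(d_u)$, and $t\in L^1$ on the relevant set). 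Hence I can pass the limit inside the integral. Pointwise, $\frac{d}{d\eps}\big|_{\eps=0} h(d_{\ue}) = h'(d_u)\cdot\frac{d}{d\eps}\big|_{\eps=0} d_{\ue}$, and from \eqref{detblaue1} that derivative is $2\eta^2 d_u + 2f'(R)\eta\, t(x)$. So the determinant contribution to the $\limsup$ is $\int_\om h'(d_u)(2\eta^2 d_u + 2f'(R)\eta t)\,dx$.

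Combining, the variational inequality $\limsup_{\eps\nearrow 0}(E(\ue)-E(u))/\eps \le 0$ becomes
\[ 2\lambda\!\int_\om\!\big(\eta^2|\nabla u|^2 + \nabla u\cdot((u-a)\otimes\nabla(\eta^2))\big) + \int_\om h'(d_u)\big(2\eta^2 d_u + 2f'(R)\eta t\big)\,dx \le 0, \]
and after dividing by $2$ and moving the determinant terms to the right I would split $\int_\om h'(d_u)\eta^2 d_u$ according to the sign of $h'$: on $\{d_u \ge 1\}$ one has $h'(d_u)\ge 0$, so $-\int_{\{d_u\ge1\}} h'(d_u)\eta^2 d_u \le 0$ and this term can simply be dropped, leaving only $\int_{\{d_u<1\}}\eta^2 |h'(d_u)| d_u$ (using $h'(d_u)<0$ there). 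For the remaining term $-\int_\om h'(d_u)f'(R)\eta t$, I would keep the part on $\{d_u\ge 1\}$ explicitly, giving $\int_{\{d_u\ge1\}} h'(d_u)\eta t |f'(R)|\,dx$ after using $f'(R)\le 0$ from \eqref{fprime}; the part on $\{d_u<1\}$ has $h'(d_u)<0$ and $f'(R)\le 0$ and $t\ge 0$, so $-h'(d_u)f'(R)\eta t \le 0$ there and is discarded. This produces exactly \eqref{reg1}.

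The main obstacle I anticipate is the rigorous justification of the interchange of limit and integral in the determinant term, and in particular verifying that the majorant in \eqref{ub2} is genuinely in $L^1(B(x_0,2r))$: this requires knowing $1/d_u \in L^1$ on $\{d_u<1\}$, which comes from $E(u)<\infty$ together with $|\ln s| \le$ something like $s^{-1}$ for small $s$ — actually one needs the reverse, that $|\ln d_u|$ integrable does \emph{not} immediately give $1/d_u$ integrable, so one should instead bound $\max\{l,4/d_u\}\eta^2 d_u \le \max\{l d_u, 4\eta^2\}$ which is bounded, and $\max\{l,4/d_u\} |f'(R)| t\eta$: on $\{d_u \ge c_1\}$ this is controlled by $\|t\|_{L^1}$, while on $\{d_u < c_1\}$ one uses $4/d_u \cdot t = 4/d_u \cdot (\text{something involving } u-a \text{ and } \cof\nabla u)$ and an argument that $t/d_u$ is controlled by $|\nabla u|$ and $\|u-a\|_\infty$, hence in $L^1$. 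Sorting out this integrability bookkeeping — which is where the logarithmic (rather than, say, power-law) growth of $h$ is essential — is the delicate part; the rest is routine differentiation under the integral sign and sign-chasing.
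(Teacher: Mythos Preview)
Your overall architecture is right, and the treatment of the quadratic term and the final sign-chasing are both correct. The genuine gap is in the passage to the limit in the determinant term. You propose to apply dominated convergence with the majorant from \eqref{ub2}, and you claim that on $\{d_u < c_1\}$ the problematic piece $(4/d_u)\,|f'(R)|\eta t$ is integrable because ``$t/d_u$ is controlled by $|\nabla u|$ and $\|u-a\|_\infty$''. This is false. From $t = \adj\nabla u\,(u-a)\cdot e$ and $\adj\nabla u = d_u (\nabla u)^{-1}$ one gets $t/d_u = (\nabla u)^{-1}(u-a)\cdot e$, so $|t/d_u| \le |(\nabla u)^{-1}|\,|u-a|$. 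In two dimensions $|(\nabla u)^{-1}| = |\nabla u|/d_u$, which is \emph{not} controlled by $|\nabla u|$ where $d_u$ is small; and nothing in the hypotheses (in particular, $E(u)<\infty$ with merely logarithmic blow-up of $h$) guarantees $|(\nabla u)^{-1}| \in L^1_{\mathrm{loc}}$. The paper's proof singles out exactly this obstruction: it says that dominated convergence via \eqref{ub2} ``could fail because for small values of $d_u(x)$ \dots\ the term on the right-hand side of \eqref{ub2} is potentially of order $|(\nabla u)^{-1}|$''.

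The fix in the paper is not to find a better majorant but to abandon two-sided domination where $d_u$ is small. One fixes a threshold $\delta_0>0$ and applies DCT only on $\{d_u \ge \delta_0\}$, where $4/d_u$ is bounded. On $\{d_u < \delta_0\}$ one uses the explicit logarithmic form of $h$: writing $d_{\ue} = (1+\eps\eta^2)^2(d_u + \varphi)$ with $\varphi \ge 0$ (this is where $t\ge 0$, $f'\le 0$, $\eps<0$ enter), one has on the set where $d_{\ue}\le c_1$ that
\[
-\frac{1}{\eps}\big(h(d_{\ue})-h(d_u)\big) \;=\; \frac{2}{\eps}\ln(1+\eps\eta^2) + \frac{\ln(d_u+\varphi)-\ln d_u}{\eps}\;\le\; 2\eta^2 + o(1),
\]
since the second quotient is $\le 0$. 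This is a one-sided bound, uniform and integrable, and it suffices because only the $\limsup$ from above is needed. The remaining thin sets where $d_u<\delta_0$ but $d_{\ue}>c_1$ are shown to have vanishing measure. In short, the logarithm is essential not for producing an integrable dominating function, but for producing a pointwise \emph{upper} bound on $-\frac{1}{\eps}(h(d_{\ue})-h(d_u))$ that does not see $t/d_u$ at all.
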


\begin{proof}   Let $\ue$ be defined by \eqref{ueps}.  Since $\ue$ is continuous and $\eta$ is bounded, it follows that $||\ue - u||_{\infty;\om} \to 0$ as $\eps \to 0$.   Applying 
\eqref{wlm}, we can assume that 
 \[ \frac{E(\ue)-E(u)}{\eps} \leq 0\]
for $\eps$ smaller in magnitude than $\min\{\gamma, 1/2\}$, and hence, since $\eps <0$, 
\begin{equation}\label{rk1} \lambda \int_{\om} \frac{|\nabla \ue|^2 - |\nabla u|^2}{\eps} \,dx + \int_{\om} \frac{h(d_{\ue})-h(d_u)}{\eps} \,dx \leq 0.
\end{equation} 
The derivation of 
\begin{equation}\label{yoyoma}\lim_{\eps \nearrow 0} \int_{\om} \frac{|\nabla \ue|^2 - |\nabla u|^2}{\eps} \,dx = 2\int_{\om} \eta^2 |\nabla u|^2+ \nabla u \cdot ((u-a) \otimes \nabla (\eta^2)) \,dx\end{equation} follows by applying \eqref{blaue} together with a suitable dominated convergence argument.  

The argument needed to derive the terms on the right-hand side of \eqref{reg1} is more delicate. One approach would be to apply a dominated convergence theorem in conjunction with an estimate such as \eqref{ub2}, but this could fail because for small values of $d_{u}(x)$, and recalling the definition of $t$ given in \eqref{twist}, the term on the right-hand side of \eqref{ub2} is potentially of order $|(\nabla u)^{-1}|$, which we cannot assume to be in $L^1 _{\textrm{loc}}(\om)$ .    However, it does apply on $\om_0$, where 
\[\om_0 :=\{x \in \om:  \ d_u (x) \geq \delta_0 \} \]
and where $\delta_0< c_1$ is a small, positive quantity to be chosen shortly.     Now, by \eqref{rk1}
\begin{align}\nonumber  2\lambda \int_{\om} \left(\eta^2 |\nabla u|^2+ \nabla u \cdot ((u-a) \otimes \nabla (\eta^2)) \right)  \leq &  -\frac{1}{\eps}\int_{\om_0} (h(d_{u^{\eps}})-  h(d_u)) \,dx  \\ 
\label{regsplit} &-\frac{1}{\eps}\int_{\om\setminus \om_0} (h(d_{u^{\eps}}) - h(d_u)) \,dx \end{align}  
By \eqref{ub2},
\[\left|  \frac{h(d_{u^{\eps}}) -h(d_{u})}{\eps} \right| \chi_{_{\om_0}} \leq \max\left\{l,\frac{4}{\delta_0}\right\}\left(   (2\eta^2 + 1/2)d_u +2|f'(R)| \eta t \right) \chi_{_{\om_0}} .\]
Hence
\begin{align}\nonumber \lim_{\eps \nearrow 0} -\frac{1}{\eps} \int_{\om_0} (h(d_{\ue})-h(d_u)) \,dx & = \int_{\om_0}-h'(d_u) \partial \arrowvert_{\eps=0} d_{\ue} \,dx \\ \label{omzero}
& = \int_{\om_0} -h'(d_u)\left(2 \eta^2 d_u + 2 f'(R) \eta t\right) dx.\end{align}
Splitting the range of integration in \eqref{omzero} into $\om_{0}^+:=\{x \in \om_0: \ d_u (x) \geq  1\}$ and $\om_{0}^- :=\{x \in \om_0 : \ d_u (x) <  1\}$, the integrand in \eqref{omzero} satisfies
\begin{align} (-h'(d_u))\left(2 \eta^2 d_u + 2 \eta t  f'(R)\right) & \leq \left\{ \begin{array}{l l} 2 \eta^2 |h'(d_u)|d_u & \textrm{if} \ x \in \om_0^- \\
2 l \eta t |f'(R)| &  \textrm{if} \ x \in \om_0^+. 
\end{array}\right.  \label{c1}
\end{align} 
The first inequality follows from the fact that $h'(d_u) <0$ if $d_u < 1$ and $\eta t f'(R) \leq 0$ by construction, so $-h'(d_u) \eta t f'(R) \leq 0$ on $\om_0^{-}$.  A similar argument using the fact that $0 \leq h'(d_u) \leq l$ on $\om_0^{+}$ yields the second inequality.  Notice that $\om_0^{-} = \{ x \in \om:  \ \delta_0 \leq  d_u (x) < 1\}$ and that $\om_0^{+} = \{x \in \om: d_u (x) \geq 1\}$.

The other term on the right-hand side of \eqref{regsplit} can be dealt with as follows.   Let $\om_1:=\om \setminus \om_0$, define
\begin{align}\label{defphi} \varphi(x;\eps) & :=  \frac{2\eps f'(R) \eta(x) t(x)}{(1+\eps \eta^{2}(x))}\end{align}
and notice that, by \eqref{detblaue1},  
\[d_{u^{\eps}}=(1+\eps \eta^2)^2 (d_u + \varphi).\] 
We now write $\om_1 = \om_2(\eps) \cup \om_2(\eps) \cup \om_4(\eps)$, where
\begin{align*} \om_2(\eps) & = \{ x \in \om_1: \ d_{u^{\eps}} \leq c_1\}, \\
\om_3(\eps) & = \{ x \in \om_1: \ c_1 < d_{u^{\eps}} < 1 \}, \\
\om_4(\eps) & = \{ x \in \om_1: \ d_{u^{\eps}} \geq 1\}.
\end{align*}
The constant $c_1 < 1$ is introduced in \eqref{defh}, according to which if $x$ belongs to $\om_2(\eps)$ then
 \begin{align}\label{norah}-\frac{1}{\eps}(h(d_{u^\eps})-h(d_u)) & = \frac{2}{\eps} \ln (1+\eps \eta^2) +\frac{\ln (d_u + \varphi) - \ln d_u}{\eps} \leq 2\eta^2 +\frac{|\eps|}{(1+\eps)},  \end{align}
where the last inequality holds because $\varphi \geq 0>\eps$, so that $(\ln (d_u+\varphi) - \ln d_u )/\eps \leq 0$, and by elementary estimates for $\ln(1+\eps \eta^2)$.   It follows that
\begin{align}\label{c2}\limsup_{\eps \nearrow 0} \int_{\om_{2}(\eps)}\frac{h(d_{u^\eps})-h(d_u)}{|\eps|}\,dx   \leq\int_{\om_{2}(0)} 2 \eta^2 \,dx. \end{align}
But $\om_{2}(0) = \{x \in \om: \ d_u (x) < \min\{c_1,\delta_0\}\}$, which, because $\delta_0 < c_1$, coincides with the set $\om_1$.

 It remains to consider the sets $\om_{3}(\eps)$ and $\om_{4}(\eps)$.   

\vspace{2mm}
\noindent{\textbf{Claim:}} For $j=3$ and $4$, $\mathcal{L}^2(\om_{j}(\eps)) \to 0$ as $\eps \to 0$.

\vspace{1mm}
\noindent{\textbf{Proof of the claim:}}   Let $c \geq c_1$ and define the set $\om(\eps,c):=\{x \in \om_1: \ d_{u^\eps} \geq c\}$.   Note that $\om_4(\eps) = \om(\eps,1) \subset \om(\eps,c_1)$ and $\om_3(\eps) \subset  \om(\eps,c_1)$, so  it suffices to prove that $\mathcal{L}^{2}(\om(\eps,c_1)) \to 0$ as $\eps \to 0$.  

For any $x \in \om(\eps,c_1)$ it holds that
\[ d_u + \varphi \geq \frac{c_1}{(1+\eps \eta^2)^2}  \geq c_1,\]
so that, on using the definition of $\varphi$ given in \eqref{defphi}, the fact that $d_u <  \delta_0$ on $\om_1$, and the assumptions $|\eps| < 1/2$ and $\eta^2 \leq 1$,  
\[ 2 \eps f'(R) \eta t \geq \frac{c_1-\delta_{0}}{2} \ \ \textrm{if} \ x \in \om(\eps,c_1).\]
Hence 
\[\mathcal{L}^2(\om(\eps,c_1))  \leq \frac{4|\eps|}{(c_1-\delta_0)}\int_{\om(\eps,c_1)} |f'(R)|\eta t \,dx.  \]
By the definition of $t$ given in \eqref{twist} and the fact that $f'$ has support in a fixed annulus about $x_0$, the integrand $|f'(R)|\eta t$ is clearly in $L^{2}(\om)$.  The claim now follows.

\vspace{1mm}  The next step in the proof of the proposition consists in showing that 
\begin{align*}\limsup_{\eps \nearrow 0} \int_{\om_{j}(\eps)} \frac{h(d_{u^\eps})-h(d_u)}{|\eps|}\,dx  \leq 0  \end{align*}
when $j=3$ and $4$.  The argument needed in the case of $\om_3(\eps)$ is as follows.

Let $x \in \om_3(\eps)$, so that $c_1 < d_{u^\eps} < 1$.   Note that 
$\lim_{\eps \to 0} d_{u^\eps}(x) = d_u(x) < \delta_0 < c_1$, and hence, since $\tilde{\eps} \mapsto d_{u^{\tilde{\eps}}}$ is quadratic in $\tilde{\eps}$, there exists a unique $\eps_1(x)$, say, measurable in $x$, belonging to the interval $(\eps,0)$ and such that $d_{u^{\eps_1(x)}}(x) = c_1$, with 
$d_{u^{\tilde{\eps}}}(x) > c_1$ if $\eps < \tilde{\eps} < \eps_1(x)$ and
$d_{u^{\tilde{\eps}}}(x) < c_1$ if $\eps_1(x) < \tilde{\eps} < 0$.   Now write
\begin{align*}-\frac{1}{\eps}(h(d_{u^\eps})-h(d_u)) & =  -\frac{1}{\eps}(h(d_{u^\eps})-h(d_{u^{\eps_1(x)}}))  -\frac{1}{\eps}(h(d_{u^{\eps_1(x)}})-h(d_u)) \\
& =: A + B.\ \end{align*}
The quotient $A$ can be estimated by writing
\begin{align*} A & = \int_{\eps}^{\eps_1(x)} \frac{2\theta'(d_{u^{\tilde{\eps}}})}{\eps}\left[(1+\tilde{\eps}\eta^2) d_u + (1+ 2\tilde{\eps}\eta^2)f'(R)\eta t\right]\,d\tilde{\eps}
\\
& \leq 2|\theta'(c_1)|\left(1- \frac{\eps_1(x)}{\eps}\right)\left[\delta_0 + |f'(R)|t(x)\right].
\end{align*}
The right-hand side of the last line above is clearly an integrable function, so that, by standard results together with the claim above, $\int_{\om_{3}(\eps)} A(x) \,dx \to 0$ as $\eps \to 0$.   $B$ can be estimated similarly, this time by appealing to the expressions in \eqref{norah} but with $\eps_1(x)$ in place of $\eps$, followed by an integration over $\om_3(\eps)$ and another application of the claim above.

\vspace{1mm}
Next, we consider the integral of the difference quotient over $\om_4(\eps)$.  Note that because $1+\eps \eta^2 < 1$ we have $d_{u^\eps} < d_u + \varphi$, and therefore since $1 \leq d_{u^{\eps}}$ and $h$ is increasing on $(1,+\infty)$, 
\begin{align*}\int_{\om_4(\eps)} \frac{h(d_{u^\eps}) - h(d_u)}{|\eps|}& \,dx  \leq \int_{\om_4(\eps)} \frac{h(d_u+\varphi)-h(d_u)}{|\eps|}\,dx \\
& \leq \int_{\om_4(\eps)\cap \{d_u+\varphi < c_2\}} \frac{(h(c_2)+\ln \delta_0)}{|\eps|} \,dx + \\ & + \int_{\om_4(\eps) \cap \{d_u+\varphi \geq c_2\}} \frac{(l \delta_0+ m + \ln \delta_0)}{|\eps|} \,dx + \int_{\om_4(\eps)} \frac{l \varphi}{|\eps|}\,dx.  
\end{align*}
Let $\delta_0$ be so small that the first two integrands are negative.   The term in $\varphi$ satisfies
\[\int_{\om_4(\eps)} \frac{l\varphi}{|\eps|}\,dx \leq  \int_{\om_4(\eps)} 4 l |f'(R)|\eta t\,dx,\]
where the  integrand $4l|f'(R)|\eta t \chi_{_{\om_1^{+} (\eps)}}$ converges a.e. and boundedly in $L^{2}(\om)$ to zero (by the claim above).   We have therefore shown that 
\begin{align}\label{c3}\limsup_{\eps \nearrow 0}\int_{\om_4(\eps)} \frac{h(d_{u^\eps}) - h(d_u)}{|\eps|} \, dx  \leq  0.
\end{align}     
To finally obtain \eqref{reg1}, take the limit $\eps \to 0$ in \eqref{rk1} and apply \eqref{yoyoma}, \eqref{omzero}, \eqref{c1}, \eqref{c2} and \eqref{c3}.  This gives
\begin{align}
\label{lacets}
 \lambda \int_{\om} \left(\eta^2 |\nabla u|^2+ \nabla u \cdot ((u-a) \otimes \nabla (\eta^2)) \right) & \leq \int_{\om_0^{-}} \eta^2 \,dx + \int_{\om_0^{+}} l \eta t |f'(R)| \,dx  \\ \nonumber & + \int_{\om_1} \eta^2\,dx.
\end{align}
The remarks above imply that 
\[ \om_0^{-} \cup \om_1 = \{x \in \om: \ d_u (x) < 1\}\]
and we have already pointed out that $\om_0 ^{+}$ is the set on which $d_u(x) \geq 1$.  Hence \eqref{lacets} implies \eqref{reg1} when the definition of $h$ given in \eqref{defh} is applied.\qed
\end{proof}

\subsection{H\"{o}lder regularity} \label{holreg}

We now apply some well-known steps from elliptic regularity theory to prove that $u$ satisfying $\eqref{reg1}$ must be H\"{o}lder continuous.  The argument uses the following technical lemma together with a version of Morrey's Dirichlet growth inequality.  The lemma could be deduced from \cite[Lemma 2.1, Chapter 3]{Gi83}; however, to keep the paper self-contained we give a direct proof below.

\begin{lemma}\label{l:growth}Let $c, r_1 >0$, $\mu \in (0,1)$ and $p\geq 1$.  Suppose $\phi: (0,2r_1) \to (0,\infty)$ is nondecreasing and satisfies
\begin{equation}\label{iterant} \phi(r) \leq c r^p + \mu \phi (2r) \ \ \textrm{for all} \  r \in (0,r_1).
\end{equation}
Then there is $\alpha >0$ and  constants $\eps_0 >0$ and $c>0$ depending on $r_1$ such that 
\begin{equation}\label{decay}  \phi(r) \leq c r^\alpha  \ \textrm{for all} \  r \in (0,\eps_0).\end{equation}
\end{lemma}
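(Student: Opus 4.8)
This is the standard iteration lemma from elliptic regularity (a hole-filling / dyadic-decay argument), so the plan is to iterate the inequality \eqref{iterant} along a geometric sequence of radii and absorb the polynomial term. First I would fix $r \in (0,\eps_0)$ for a suitable small $\eps_0 \le r_1$ to be chosen, and set $r_k := 2^{-k} r_0$ for a starting radius $r_0$ chosen so that $r_0 < r_1$ but comparable to $r_1$; then \eqref{iterant} applied at $r = r_{k+1}$ reads $\phi(r_{k+1}) \le c\, r_{k+1}^{\,p} + \mu\, \phi(r_k)$. Iterating this from $k=0$ down to the scale of $r$ gives
\begin{equation*}
\phi(r_n) \le \mu^n \phi(r_0) + c \sum_{j=1}^{n} \mu^{n-j} r_j^{\,p}.
\end{equation*}
Since $r_j^{\,p} = 2^{-jp} r_0^{\,p}$, the sum is a geometric-type series in the ratio $\mu \cdot 2^{p}$ versus $1$; rather than worry about which of $\mu 2^p$ and $1$ is larger, the clean trick is to choose $\alpha \in (0,p)$ with $2^{-\alpha} > \mu$, i.e. $\alpha < \log_2(1/\mu)$, and also $\alpha \le p$, so that both $\mu < 2^{-\alpha}$ and $2^{-p} \le 2^{-\alpha}$. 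Then each term $\mu^{n-j} r_j^{\,p} \le C\, 2^{-\alpha(n-j)} 2^{-\alpha j} r_0^{\,p} = C\, 2^{-\alpha n} r_0^{\,p}$ up to constants, and summing the $n$ terms and also bounding $\mu^n \le 2^{-\alpha n}$ yields $\phi(r_n) \le C(n+1) 2^{-\alpha n} r_0^{\,p} \le C' 2^{-\alpha' n} r_0^{\,p}$ for any $\alpha' < \alpha$, absorbing the linear factor $n+1$.

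Next I would convert this decay along the discrete sequence $\{r_n\}$ into decay for all small $r$, using monotonicity of $\phi$. Given $r \in (0, r_0/2)$, pick $n$ with $r_{n+1} < r \le r_n$; then $\phi(r) \le \phi(r_n) \le C' 2^{-\alpha' n} r_0^{\,p}$, and since $2^{-n} = r_n/r_0 < 2 r/r_0$ we get $2^{-\alpha' n} \le (2 r/r_0)^{\alpha'}$, hence $\phi(r) \le C' (2/r_0)^{\alpha'} r_0^{\,p}\, r^{\alpha'} =: c\, r^{\alpha'}$, which is \eqref{decay} with exponent $\alpha'$ and $\eps_0 = r_0/2$. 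The constant $c$ depends on $c$ (the original constant), $\mu$, $p$, and $r_1$ through $r_0$, as claimed.

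There is no serious obstacle here — the only points requiring a little care are (i) making the choice of $\alpha$ explicit so that the geometric series converges uniformly and the polynomial factor gets absorbed, and (ii) the bookkeeping in passing from dyadic radii to arbitrary radii via monotonicity. I would present the argument in exactly that order: set up the dyadic sequence, iterate to get the summed bound, choose $\alpha$ and sum, then de-discretize. One should note that $\phi$ is only assumed defined and positive on $(0,2r_1)$ and \eqref{iterant} only holds for $r \in (0,r_1)$, so all radii $r_k$ in the iteration must be kept below $r_1$, which is why $r_0$ is chosen strictly less than $r_1$; this is a harmless restriction that only affects the admissible range $(0,\eps_0)$ in the conclusion.
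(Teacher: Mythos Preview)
Your proposal is correct and follows essentially the same route as the paper: iterate \eqref{iterant} along a dyadic sequence of radii, sum the resulting series, and use monotonicity of $\phi$ to pass from dyadic to arbitrary small $r$. The only difference is in how the sum $\sum_j \mu^{n-j} r_j^{\,p}$ is estimated: you pick $\alpha < \min\{p,\log_2(1/\mu)\}$ up front, bound every term by $2^{-\alpha n}r_0^{\,p}$, and absorb the linear factor $n+1$ by shrinking the exponent; the paper instead splits the sum at the index $\lfloor k/2\rfloor$ and bounds each half crudely, ending up with $\phi(r)\le \tilde c\,\max\{r^{p/2},r^{\alpha'/2}\}+2^{\alpha'}(r/r_1)^{\alpha'}\phi(r_1)$ where $\mu=2^{-\alpha'}$. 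Your bookkeeping is slightly cleaner (and is the version one usually sees in, e.g., Giaquinta), while the paper's splitting trick avoids introducing and then absorbing the factor $n$; both yield the same conclusion with comparable effort.
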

\begin{proof} Fix $\alpha'>0$ so that $\mu = 2^{-\alpha'}$, let $r \in (0,r_1)$ and let the integer $k$ be such that $2^{-(k+1)}r_1 < r \leq 2^{-k}r_1$.  Since $\phi$ is nondecreasing, $\phi(r) \leq \phi(2^{-k}r_1)$.   Next, iterating the expression in \eqref{iterant} gives 
\begin{align}\label{bingo} \phi(2^{-k}r_{1})&  \leq  c \left\{(2^{-k} r_1)^{p}+ \mu(2^{-(k-1)} r_1)^{p} + \ldots + \mu^{(k-1)}(2^{-1}r_1)^{p}\right\} + \mu^{k}\phi(r_1). 
\end{align}
Let $k' = \lfloor k/2 \rfloor$.  The sum on the right-hand side of the last equation can then be written as
\begin{align*} c \left\{(2^{-k} r_1)^{p}+ \ldots + \mu^{(k-1)}(2^{-1}r_1)^{p}\right\} + \mu^{k}\phi(r_1) & = c \sum_{j=0}^{k'} \mu^{j}(2^{-(k-j)} r_1)^{p} \\ & + c \sum_{j=k'+1}^{p} \mu^{j}(2^{-(k-j)} r_1)^{p}  + \mu^{k} \phi(r_1).
\end{align*}
Estimating each sum, we obtain
\begin{align}\label{sum1} \sum_{j=0}^{k'} \mu^j (2^{-(k-j)} r_1)^{p} & \leq \frac{(2^{-(k-k')}r_1)^{p}}{1-\mu} \\
\label{sum2}\sum_{j=k'+1}^{k-1} \mu^j (2^{-(k-j)} r_1)^{p} & \leq \frac{\mu^{k'+1}(2^{-1} r_{1})^{p}}{1-\mu}
\end{align}
Since $k'+1 \geq (k+1)/2$ and $2^{-(k+1)}r_1 < r$, we have $\mu^{k'+1} < (r/r_{1})^{\alpha'/2}$, so that the right-hand side of \eqref{sum2} is bounded above by the quantity
$(2^{-1}r)^{p}(r/r_{1})^{\alpha'/2}$.

To deal with the term in \eqref{sum1} first note that $k/2 \leq k - k' \leq (k+1)/2$, so that $2^{-(k-k')}r_1 \leq 2^{-k/2}r_{1}$.   Since $2^{-(k+1)}r_1 < r$ we must have $(2^{-k/2}r_1)^{1/2} < (2rr_1)^{1/2}$, and hence $(2^{-(k-k')}r_1)^{p} < (2rr_1)^{p/2}$.

In summary, \eqref{bingo}, together with \eqref{sum1} and \eqref{sum2}, yields
\[ \phi(r) \leq \tilde{c}(p,r_1)\max\{r^{\frac{p}{2}}, r^{\frac{\alpha'}{2}}\}+2^{\alpha'}\left( \frac{r}{r_1}\right)^{\alpha'} \phi(r_{1})\] 
for some constant $\tilde{c}$ with dependence as shown.   The statement of the lemma now follows.\qed
\end{proof}

\begin{theorem}\label{t1}  Let $E$ be defined by \eqref{e1} and suppose that $u$ is a strong local minimizer of $E$ in $\sca$.   Suppose further that $F(u,B(z,\delta),r')=0$ for some $B(z,\delta) \subset \om$ and $r'>0$, where $F$ is given by \eqref{defF}.
Then $u$ is H\"{o}lder continuous on any compact subset of $B(z,\delta)$.

\end{theorem}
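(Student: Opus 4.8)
The plan is to derive from the variational inequality \eqref{reg1} a Caccioppoli-type inequality for $u$ on balls $B(x_0,r) \subset B(z,\delta)$, and then feed this into the iteration Lemma \ref{l:growth} together with a Morrey-type Dirichlet growth argument to conclude H\"older continuity. Fix a compact $K \subset B(z,\delta)$ and $x_0 \in K$; for $2r < r'$ take $\eta(x) = f(|x-x_0|)$ as in \eqref{F1}--\eqref{fprime}, so $\eta \equiv 1$ on $B(x_0,r)$, $\eta \equiv 0$ off $B(x_0,2r)$, and $|\nabla \eta| \le c/r$. The left side of \eqref{reg1} contains the good term $\lambda \int \eta^2 |\nabla u|^2$; the cross term $\lambda \int \nabla u \cdot ((u-a)\otimes \nabla(\eta^2))$ is absorbed using Young's inequality, at the cost of a term $\frac{C\lambda}{r^2}\int_{B(x_0,2r)\setminus B(x_0,r)} |u-a|^2\,dx$. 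Since $u$ is continuous, $|u(x)-u(x_0)| \le \omega(2r)$ on $B(x_0,2r)$ with $\omega$ the modulus of continuity, so this term is $O(r^{-2}\cdot r^2 \cdot \omega(2r)^2) = O(\omega(2r)^2)$; more usefully one keeps it as $\frac{C}{r^2}\int_{B(x_0,2r)}|u-a|^2$ and later controls it by a Poincar\'e inequality involving $\int_{B(x_0,2r)}|\nabla u|^2$.

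Next I would bound the right side of \eqref{reg1}. On $\{d_u < 1\}$ we have $|h'(d_u)| d_u = |\ln d_u|\cdot d_u$ when $d_u < c_1$ (since $h(s)=|\ln s|$ there) and $|\theta'(d_u)|d_u$ when $c_1 \le d_u < 1$; in either case $|h'(d_u)|d_u$ is bounded by an absolute constant, because $s|\ln s| \to 0$ as $s\to 0+$ and $|\theta'|d$ is continuous on $[c_1,1]$. Hence $\int_{\{d_u<1\}} \eta^2 |h'(d_u)| d_u \le C |B(x_0,2r)| = C' r^2$. For the second term, $0 \le h'(d_u) \le l$ on $\{d_u \ge 1\}$, and $|f'(R)|$ is supported in the annulus $r<R<2r$ with $|f'| \le c/r$; using $t(x) = t(x,x_0,u) = \adj \nabla u\,(u-a)\cdot e(x)$ and $|u-a| \le \omega(2r)$ there, we get $\int_{\{d_u \ge 1\}} \eta t h'(d_u)|f'(R)| \le \frac{Cl\,\omega(2r)}{r}\int_{B(x_0,2r)\setminus B(x_0,r)} |\nabla u|\,dx \le Cl\,\omega(2r)\left(\int_{B(x_0,2r)}|\nabla u|^2\right)^{1/2}$ by Cauchy--Schwarz, which is again absorbable by Young's inequality into $\epsilon \int_{B(x_0,2r)}|\nabla u|^2 + C_\epsilon\, \omega(2r)^2 r^2$.

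Combining, and writing $\Phi(r) := \int_{B(x_0,r)} |\nabla u|^2\,dx$, I expect an inequality of the form $\Phi(r) \le C r^2 + \epsilon\, \Phi(2r) + \frac{C}{r^2}\int_{B(x_0,2r)}|u-a|^2\,dx$. The remaining task is to handle the last term: using the Poincar\'e--Sobolev inequality on the annulus/ball (with $a = u(x_0)$, or better with the mean of $u$ and then comparing to $u(x_0)$ using continuity) one has $\frac{1}{r^2}\int_{B(x_0,2r)}|u-a|^2 \le C\Phi(2r) + (\text{lower order})$, but the constant $C$ is not small, so a direct absorption fails. The standard remedy — and I expect this to be the main obstacle — is a hole-filling trick: add a multiple of $\Phi(2r)$ to both sides to get $\Phi(r) + \theta\Phi(2r) \le (\mu)(\Phi(2r) + \theta\Phi(2r)) + Cr^2$ for some $\mu<1$ after choosing $\epsilon$ and the hole-filling parameter correctly, i.e. $\psi(r) := \Phi(r)$ satisfies $\psi(r) \le \mu\,\psi(2r) + Cr^2$ with $\mu \in (0,1)$. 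Then Lemma \ref{l:growth} (with $p=2$) gives $\Phi(r) \le C r^\alpha$ for some $\alpha \in (0,2)$ and all small $r$, uniformly for $x_0 \in K$. Finally, Morrey's Dirichlet growth theorem (the version asserting that $\int_{B(x_0,r)}|\nabla u|^2 \le Cr^{\alpha}$ for all $x_0$ in an open set and all small $r$ implies $u \in C^{0,\alpha/2}_{\mathrm{loc}}$) yields that $u$ is H\"older continuous on $K$ with exponent $\alpha/2$. Since $K \subset B(z,\delta)$ was an arbitrary compact subset, $u$ is locally H\"older continuous on $B(z,\delta)$, as claimed. The one delicate point worth flagging is that the constants in the Caccioppoli step must be checked to be independent of $x_0 \in K$ (they are, since they depend only on $\lambda, l, c_1$, the fixed cutoff constant $c$, and $\mathrm{dist}(K,\partial B(z,\delta))$ through the constraint $2r < r'$), and that the modulus-of-continuity terms $\omega(2r)^2$ are genuinely lower order and do not obstruct the hole-filling.
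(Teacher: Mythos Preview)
Your overall strategy matches the paper's exactly: extract a Caccioppoli-type inequality from \eqref{reg1}, apply Widman's hole-filling to obtain $\Phi(r)\le \mu\Phi(2r)+Cr^{2}$ with $\mu<1$, feed this into Lemma~\ref{l:growth}, and conclude via Morrey's Dirichlet growth theorem. Your handling of the first right-hand term of \eqref{reg1} (namely $|h'(d_u)|d_u$ bounded on $\{d_u<1\}$) is also essentially what the paper does.

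There is, however, a genuine gap in how you treat the second right-hand term. You estimate $|u-a|\le\omega(2r)$ and arrive via Cauchy--Schwarz and Young at a residual you write as $C_{\epsilon}\,\omega(2r)^{2}r^{2}$. The $r^{2}$ factor is an arithmetic slip: tracing your own chain, $\tfrac{Cl\omega(2r)}{r}\int_{B_{2r}\setminus B_r}|\nabla u|\le Cl\omega(2r)\,\Phi(2r)^{1/2}$, and Young's inequality leaves only $C_{\epsilon}\,\omega(2r)^{2}$. Since $u$ is merely continuous, $\omega(2r)^{2}$ carries no a~priori power of $r$, so the hypothesis $\phi(r)\le cr^{p}+\mu\phi(2r)$ of Lemma~\ref{l:growth} fails, and the iteration does not yield $\Phi(r)\le cr^{\alpha}$. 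Your closing remark that ``the modulus-of-continuity terms $\omega(2r)^{2}$ are genuinely lower order and do not obstruct the hole-filling'' is therefore the crux, and it is not justified: these terms \emph{do} obstruct the decay estimate.

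The paper avoids this entirely by \emph{not} invoking $\omega$. It bounds the second right-hand term exactly like the cross term on the left: using $t\le|\nabla u|\,|u-a|$ and $|f'(R)|\le|\nabla\eta|$, one gets $\int_{\{d_u\ge 1\}}\eta\,t\,h'(d_u)|f'(R)|\le l\int_{B_{2r}}\eta|\nabla\eta|\,|\nabla u|\,|u-a|$. Both the cross term and this term are then absorbed via a single Young step, leaving only $\frac{C}{r^{2}}\int_{B_{2r}\setminus B_r}|u-a|^{2}$ with the integral localised on the annulus. The paper's Poincar\'e-type inequality (Lemma~\ref{app1}), which is stated directly with $a=u(x_0)$ rather than a mean value, converts this to $C\int_{B_{2r}\setminus B_r}|\nabla u|^{2}$, and hole-filling (adding $C'\Phi(r)$, not $\Phi(2r)$, to both sides) then gives $\Phi(r)\le \mu r^{2}+\mu\Phi(2r)$ with $\mu=C'/(C'+1)<1$. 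This keeps all terms expressed purely through $\Phi$ and powers of $r$, so Lemma~\ref{l:growth} applies cleanly.
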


\begin{proof}  By Lemma \ref{ellipt1}, we can assume that for almost every $x_0$ in $B(z,\delta)$ the inequality \eqref{reg1} holds.   The right-hand side of \eqref{reg1} consists of two terms which satisfy the following estimates:
\begin{align*}  \int_{\{x \in \om: \ d_u (x) < 1\}} \eta^2 |h'(d_u)| d_u \,dx & \leq \frac{1}{c_1}\int_{B_{2r}} \eta^2 \,dx  \\
 \int_{\{x \in \om: \ d_u (x) \geq  1\}} \eta t h'(d_u) |f'(R)| \,dx  & \leq l \int_{B_{2r}} \eta |\nabla \eta||\nabla u||u-a| \,dx.
\end{align*}
The first estimate uses the definition of $h$ given in \eqref{defh};   the second uses the fact that $\eta(x)=f(|x-x_0|)$, where $f$ has support in $[0,2r]$, along with \eqref{twist}.   Substituting these into \eqref{reg1} gives
\begin{align*}  \lambda \int_{\om} \left(\eta^2 |\nabla u|^2+ \nabla u \cdot ((u-a) \otimes \nabla (\eta^2)) \right) & \leq \frac{1}{c_1}\int_{B_{2r}} \eta^2 \,dx  +\\+ & \frac{l}{2} \int_{B_{2r}}  \eta |\nabla \eta||\nabla u||u-a| \,dx + Cr^2.
\end{align*}
The term involving $\nabla u \cdot ((u-a) \otimes \nabla (\eta^2))$ is clearly bounded pointwise by a multiple of $\eta|\nabla \eta||\nabla u||u-a|$, so that on rearranging we obtain for some constant $C$ that
\begin{align*}  \lambda \int_{\om} \eta^2 |\nabla u|^2 \, dx  \leq &\frac{1}{c_1}\int_{B_{2r}} \eta^2 \,dx  + C \int_{B_{2r}}  \eta |\nabla \eta||\nabla u||u-a| \,dx \\
 \leq &\frac{1}{c_1}\int_{B_{2r}} \eta^2 \,dx  +\frac{\lambda}{2} \int_{B_{2r}}\eta^2 |\nabla u|^2 \,dx + \\ &  \quad \quad \quad \quad \quad \quad +
\frac{C^2}{2\lambda} \int_{B_{2r}\setminus B_r} |\nabla \eta|^2 |u-a|^2\,dx.
\end{align*}

By the Poincar\'{e} inequality \eqref{taptap123} and the bound $|\nabla \eta| \leq c/r$, 
\[  \int_{B_{2r}\setminus B_r} |\nabla \eta|^2 |u-a|^2 \,dx \leq \frac{7c^2}{3} \int_{B_{2r} \setminus B_{r}} |\nabla u|^2\,dx,\] 
so that by gathering terms in $\eta^2|\nabla u|^2$ and applying \eqref{F1} we obtain
\begin{align*}
\int_{B_r} |\nabla u|^2 \,dx \leq C' r^2 + C' \int_{B_{2r} \setminus B_r} |\nabla u|^2 \,dx,
\end{align*}
where $C'$ depends on $c_1$, $\lambda$ and $l$ but not on $x_0$.  Applying Widman's hole-filling technique, we add $C'\int_{B_r} |\nabla u|^2 \,dx$ to both sides and divide by $C'+1$, giving
\[ \int_{B_r} |\nabla u|^2 \,dx \leq \mu r^2 + \mu \int_{B_{2r}} |\nabla u|^2 \,dx,\]   
with $\mu:=C'/(C'+1)$.  Let 
\[ \phi(x_0, r):=\int_{B(x_0,r)} |\nabla u|^2 \,dx\]
and apply Lemma \ref{l:growth} to deduce that $\phi(x_0,r) \leq c r^\alpha$ for some $\alpha > 0$ which is independent of $r$.   Thus for almost every $x_0$ in $B(z,\delta)$ it holds that
\begin{equation}\label{dg}\int_{B(x_0,r)}|\nabla u|^2 \,dx  \leq c r^\alpha \ \ \textrm{for } 0 < r < r'.\end{equation} 
Since $x_0 \mapsto \phi(x_0,r)$ is continuous, it follows that \eqref{dg} holds at all $x_0$ in $B(z,\delta)$.   The desired H\"{o}lder continuity of $u$ is now an immediate consequence of Morrey's Dirichlet growth theorem (\cite[Theorem 3.5.2]{Mo66}).   
\end{proof}

\section{H\"{o}lder regular shear solutions}\label{shear}

In this section we exhibit a variational problem from nonlinear elasticity theory to which some of the ideas described earlier in the paper apply.    Our purpose is to directly illustrate that the singular term involving $h(\det \nabla u)$ appearing in the energy functional has a regularizing effect on the minimizer.

We consider a restricted class of deformations, which we call shear maps, and which take the form
\[ \us(x) =  \left(\begin{array}{c}x_{1} \\ x_{2}+\sigma(x)\end{array}\right).\] 
Here, $\sigma$ is a scalar-valued map which belongs to a certain subclass of the space $W^{1,2}(Q;\R)$ and $Q$ is the set $[-1,1]^2$ in $\R^2$.  The choice of $Q$ is not pivotal, but it does enable the problem to be visualised more easily.   If we let $l_{x_{1}}=\{(x_{1}, t)^{T}: \ t \in \R\}$  be the vertical line through $(x_{1},0)^{T}$ then $\us$ maps  $Q \cap l_{x_{1}}$ to a subset of $l_{x_{1}}$ for each $-1< x_{1} < 1$.  Thus nearby lines in $Q$ are `sheared' relative to one another.

An advantage of using shear maps $\us$ is that the Jacobian $\det \nabla \us$ is affine in $\sigma_{,_{2}}$.   Specifically, a short calculation shows that
\begin{equation}\label{detaff}
\det \nabla \us = 1+\sigma_{,_{2}}.
\end{equation}
Thus, in the notation introduced in Section \ref{prm}, the energy $E(\us)$ of a shear map takes the form
\begin{equation}\label{eshear}
E(\us) = \int_{Q} \lambda|\1 + e_2 \otimes \nabla \sigma|^2 + h(1+\sigma_{,_{2}})\,dx.
\end{equation}
The term involving $h(1+\sigma_{,_{2}})$ has a regularizing effect on $\sigma$ in the $x_2$-direction only.  Therefore, it seems to be necessary to assume some extra regularity in the $x_1$ direction.    We impose the condition that for some $M$ 
\begin{equation}\label{lipshear}|\sigma(x_1,x_2) - \sigma(x_{1}',x_2)| \leq M |x_1 - x_1'| \ \ \ \ \forall \ (x_1,x_2), (x_{1}',x_2) \in Q,\end{equation} 
which clearly amounts to assuming that $\sigma(x_1,x_2)$ is Lipschitz in the $x_1$-direction with Lipschitz constant independent of $x_2$.	We conjecture that this can be weakened to a uniform H\"{older} assumption on $\sigma$ in the $x_1$ direction.

The form of the Jacobian \eqref{detaff} for shear maps means that we can treat a wider class of stored-energy functions than those detailed in Section \ref{prm}, where we can in particular allow quadratic growth of $h(s)$ as $s \to +\infty$ and the behaviour of $h(s)$ as $s \to 0+$ need not be logarithmic in $s$.    See Section \ref{shear1} for the assumptions applied to $h$ in the shear map setting.    The affine Jacobian \eqref{detaff}    also allows us to design outer variations $u_{\sigma^{\eps}}$, say, within the class of shear maps which obey the constraint $\det \nabla u_{\sigma^{\eps}} > 0$ a.e.   In previous sections of the paper this was achieved by imposing a condition of positive twist;  here, we avoid imposing that condition, which is hard to verify, and replace it with the simpler condition \eqref{lipshear} in the $x_1$-direction.

\subsection{The functional $E(u)$} 	\label{shear1}
	
In light of the comments above, we modify the energy functional $E(u)$ introduced in Section \ref{prm}.  Let $W: \R^{2 \times 2} \to [0,+\infty]$ be given by
\begin{equation}\label{w2} W(A) = \lambda |A|^2 + h(\det A)  \ \ \ A \in \R^{2 \times 2}, \end{equation}
where
\begin{itemize}\item[(H0)] $h:\R \to [0,+\infty]$ is $C^{1}$ and strictly convex, 
\item[(H1)] $\lim_{s \to 0+} h(s) = +\infty$; 
\item[(H2)]  $h'(1)=0$ and $h(s) = +\infty$ if $s \leq 0$;
\item[(H3)] there are constants $q_{1}>0$ and $q_2$ such that 
$h'(s) \leq 2 q_{1}s  + q_{2}$ for all $s > 1$; 
\item[(H4)] there is $K>0$ and $s_0>0$ such that $h(s/2) \leq K h(s)$ for all $s \in (0,s_0)$.
\end{itemize}
Finally, let 
\begin{equation}\label{ee}E(u) = \int_{Q} W(\nabla u(x))\,dx. \end{equation}

We require admissible shear maps $\us$ to satisfy $E(\us) < +\infty$ together with an appropriate boundary condition, which may be applied on part of $\partial Q$ only if desired.    Let us fix $\sigma_0$ such that $u_{\sigma_0}$ satisfies $E(u_{\sigma_0})< +\infty$.   There are many such functions:  for example, if $\varphi_{\pm}(x_1)$ are both Lipschitz on $[-1,1]$ then the function 
\[ \sigma_{0}(x_1,x_2):=\left(\frac{1+x_2}{2}\right) \varphi_{+}(x_1) + 
\left(\frac{1-x_2}{2}\right) \varphi_{-}(x_1)\]
is such that $u_{\sigma_{0}}$ belongs to $W^{1,2}(Q,\R^2)$, with
\[ \det \nabla u_{\sigma_{0}} = 1+ \frac{1}{2}(\varphi_{+}(x_1)-\varphi_{-}(x_1)).\]
By further imposing $\varphi_{+}(x_1) \geq \varphi_{-}(x_1)$ for $-1\leq x_1 \leq 1$ we ensure that $\det \nabla u_{\sigma_0} \geq 1$, from which $E(u_{\sigma_{0}}) < +\infty$ follows easily. 

\begin{definition}\label{defas}
Let $u_{\sigma_{0}}$ be such that $E(u_{\sigma_0})< +\infty$ and let $(\partial Q)_{D} \subset \partial Q$.  Define the class $\sca_{s}$ of admissible shear maps as follows:
\[ \sca_s:=\{ \us(x)=x + \sigma(x) e_2 \in W^{1,2}(Q,\R^2): \, E(\us) < +\infty, \, \us = u_{\sigma_0} \ \textrm{on} \ (\partial Q)_D\}.\]
Here, $\us = u_{\sigma_0}$ on $(\partial Q)_{D}$ in the sense of trace, and we have suppressed the dependence of $\sca_s$ on $\sigma_{0}$ and $(\partial Q)_{D}$.
\end{definition}

The existence of a minimizing shear map $\us$ in $\sca_{s}$ now follows from methods established in \cite{BM84}.   

\begin{proposition} Let $E$ be given by \eqref{ee} and $\sca_s$ be as per Definition \ref{defas}.     Then there exists a minimizer of $E$ in $\sca_{s}$. 
\end{proposition}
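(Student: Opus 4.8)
The plan is to run the direct method of the calculus of variations, exploiting the fact that for shear maps the minimisation problem is genuinely convex. First I would observe that $\sca_s$ is nonempty, since $u_{\sigma_0}\in\sca_s$ by construction, so that $I:=\inf_{\sca_s}E$ satisfies $0\le I\le E(u_{\sigma_0})<+\infty$. Writing a generic admissible map as $\us=x+\sigma e_2$ with $\sigma\in W^{1,2}(Q;\R)$, a direct computation from \eqref{eshear} (using $\det\nabla\us=1+\sigma_{,2}$ and $|\1+e_2\otimes\nabla\sigma|^2=2+2\sigma_{,2}+|\nabla\sigma|^2$) gives
\[ E(\us)=\int_Q \lambda\bigl(2+2\sigma_{,2}+|\nabla\sigma|^2\bigr)+h(1+\sigma_{,2})\,dx. \]
Thus $\nabla\sigma\mapsto E(\us)$ is convex---$p\mapsto\lambda|\1+e_2\otimes p|^2$ is convex, $p\mapsto 1+p_2$ is affine, and $h$ is convex by (H0)---and coercive: since $h\ge 0$ and $2\lambda\sigma_{,2}\ge-\tfrac{\lambda}{2}\sigma_{,2}^2-2\lambda$ by Young's inequality, one gets $E(\us)\ge\tfrac{\lambda}{2}\int_Q|\nabla\sigma|^2\,dx$. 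Because $\det\nabla\us$ is affine in $\nabla\sigma$, no polyconvexity machinery is needed and the existence theory of \cite{BM84} here simplifies to the convex case.

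Second, I would take a minimising sequence $u_{\sigma^{(j)}}\in\sca_s$ with $E(u_{\sigma^{(j)}})\to I$. The coercivity bound gives a uniform bound on $\|\nabla\sigma^{(j)}\|_{L^2(Q)}$. To control $\sigma^{(j)}$ itself I would distinguish two cases: if $(\partial Q)_D$ has positive $\hone$ measure, the boundary condition $\sigma^{(j)}=\sigma_0$ on $(\partial Q)_D$ together with a Poincar\'e inequality bounds $\|\sigma^{(j)}\|_{L^2(Q)}$; if $(\partial Q)_D$ is $\hone$-null the energy is invariant under $\sigma\mapsto\sigma+\mathrm{const}$, so one may subtract the mean $\frac{1}{|Q|}\int_Q\sigma^{(j)}\,dx$ without altering the energy and apply the Poincar\'e--Wirtinger inequality. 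Either way $(\sigma^{(j)})$ is bounded in $W^{1,2}(Q;\R)$, so after extracting a subsequence $\sigma^{(j)}\rightharpoonup\sigma^{\ast}$ weakly in $W^{1,2}$. The trace operator onto $(\partial Q)_D$ is weakly continuous, hence $u_{\sigma^{\ast}}=u_{\sigma_0}$ on $(\partial Q)_D$, and the shear-map form $x+\sigma e_2$ is trivially preserved in the limit.

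Third, I would invoke weak lower semicontinuity: the integrand $p\mapsto\lambda|\1+e_2\otimes p|^2+h(1+p_2)$ is convex on $\R^2$, nonnegative, and lower semicontinuous---$h$ is $C^1$ on $(0,\infty)$ and $\equiv+\infty$ on $(-\infty,0]$ by (H2)---so the standard theorem on weak lower semicontinuity of convex integral functionals on $W^{1,2}$ yields $E(u_{\sigma^{\ast}})\le\liminf_j E(u_{\sigma^{(j)}})=I$. In particular $E(u_{\sigma^{\ast}})<+\infty$, which forces $h(1+\sigma^{\ast}_{,2})<+\infty$ a.e.\ and hence, again by (H2), $\det\nabla u_{\sigma^{\ast}}=1+\sigma^{\ast}_{,2}>0$ a.e. Therefore $u_{\sigma^{\ast}}\in\sca_s$ and, having minimal energy, is the desired minimiser.

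The argument is essentially routine, and the two points calling for any care are: (a) the normalisation required when $(\partial Q)_D$ is $\hone$-null, handled by the translation invariance of $E$ in $\sigma$; and (b) verifying that the weak limit re-enters the admissible class---specifically that finite energy enforces the pointwise constraint $\det\nabla u_{\sigma^{\ast}}>0$---which is precisely where hypothesis (H2) does its work, and is the part I would expect one should not gloss over. Nothing beyond $h\ge 0$, convexity and (H2) is needed for existence; the remaining hypotheses (H1), (H3), (H4) enter only the later regularity analysis.
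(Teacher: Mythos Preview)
Your proof is correct and in fact more complete than the paper's own argument. The paper simply invokes \cite[Theorem 6.1]{BM84} for sequential weak lower semicontinuity of $E$ (treating $W$ as a polyconvex integrand on $\R^{2\times 2}$), then remarks that weak limits of shear maps are again shear maps with the correct trace. You take a different, more elementary route: you observe that for shear maps $\det\nabla\us=1+\sigma_{,2}$ is \emph{affine} in $\nabla\sigma$, so the functional $\sigma\mapsto E(\us)$ is genuinely convex in $\nabla\sigma$ and the standard convex lower semicontinuity theorem applies, bypassing the polyconvexity machinery of \cite{BM84} entirely. This buys you a self-contained argument and makes transparent exactly which hypotheses on $h$ are used for existence (only (H0), (H2) and $h\ge 0$). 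You are also more careful than the paper on two points it glosses over: the explicit coercivity estimate $E(\us)\ge\tfrac{\lambda}{2}\|\nabla\sigma\|_{L^2}^2$, and the normalisation needed to bound $\|\sigma^{(j)}\|_{L^2}$ when $(\partial Q)_D$ is $\hone$-null. The paper's approach is shorter but leans on a heavier black box; yours is longer but elementary and identifies the structural reason (affine Jacobian) that makes the shear-map problem tractable.
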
 

\begin{proof}  By \cite[Theorem 6.1]{BM84}, the integrand $W$ defined in \eqref{w2} subject to assumptions (H0)-(H3) is such that $E$ is sequentially lower semicontinuous with respect to weak convergence in $W^{1,2}$.  Moreover, it is clear that if $u_{\sigma^{(j)}}$ is a minimizing sequence for $E$ in $\sca_{s}$ satisfying  $u_{\sigma^{(j)}} \rightharpoonup v$ for some $v \in W^{1,2}(Q,\R^{2})$ then $v$ must also be a shear map, that is $v=\us$ for some $\sigma$ whose trace on $(\partial Q)_{D}$ agrees with that of $u_{\sigma_0}$.  Hence $\us$ minimizes $E$ in $\sca_s$. \qed
\end{proof}

The next result is a technical lemma which uses condition \eqref{lipshear} and the determinant constraint $\det \nabla \us > 0$ a.e. to generate one-sided bounds on $\sigma$.   We use the notation $x_0=(x_{01},x_{02})^{T}$.   
\begin{lemma}\label{outerspace} Let $\us$ belong to $\sca_s$ and suppose $\sigma$ satisfies condition \eqref{lipshear}.   Let $r > 0$ and suppose $x_0=(x_{01},x_{02})^{T}$ is such that $B(x_0,r) \subset Q$.    Then there exists $C>0$ such that for almost every $x_0$ 
\begin{equation}\label{b1}\sigma(x) - \sigma(x_0) \leq C|x-x_0|\end{equation} 
for almost every $x \in B(x_0,r)$ such that $x_2 < x_{02}$, and 
\begin{equation}\label{b2} \sigma(x) -\sigma(x_0) \geq -C|x-x_0| \end{equation}
for almost every $x \in B(x_0,r)$ such that $x_2 > x_{02}$.
\end{lemma}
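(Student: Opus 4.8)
The plan is to exploit the special structure of shear maps: the Jacobian constraint $\det\nabla\us=1+\sigma_{,2}>0$ is nothing but a one-sided monotonicity statement for $\sigma$ in the $x_2$-direction, and this can be coupled with the Lipschitz bound \eqref{lipshear} in the $x_1$-direction by joining $x_0$ to $x$ along an axis-aligned path through the corner point $(x_1,x_{02})$. The constant will be $C=1+M$.

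First I would record that $E(\us)<+\infty$ together with (H2) forces $\det\nabla\us=1+\sigma_{,2}>0$ a.e.\ in $Q$, where $\det\nabla\us=1+\sigma_{,2}$ by \eqref{detaff}. By Fubini, for a.e.\ $x_1\in(-1,1)$ the slice $x_2\mapsto\sigma(x_1,x_2)$ admits an absolutely continuous representative whose a.e.\ derivative is $\sigma_{,2}(x_1,\cdot)$, so $x_2\mapsto x_2+\sigma(x_1,x_2)$ is nondecreasing on the corresponding interval. Hence, for such $x_1$ and for any heights $s<s'$ in that interval,
\[ \sigma(x_1,s')-\sigma(x_1,s)\ge s-s' \qquad\text{and}\qquad \sigma(x_1,s)-\sigma(x_1,s')\le s'-s.\]
This is the only place the determinant constraint is used.

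Next I would combine this monotonicity with \eqref{lipshear}. Fixing $x_0=(x_{01},x_{02})$ and $x=(x_1,x_2)\in B(x_0,r)$ and passing from $x_0$ to $x$ via $(x_1,x_{02})$, the horizontal leg is controlled by \eqref{lipshear}, giving $|\sigma(x_1,x_{02})-\sigma(x_{01},x_{02})|\le M|x_1-x_{01}|\le M|x-x_0|$, while the vertical leg is controlled by the slice monotonicity above with $s,s'\in\{x_2,x_{02}\}$. If $x_2<x_{02}$ the two estimates add to
\[ \sigma(x)-\sigma(x_0)\le (x_{02}-x_2)+M|x_1-x_{01}|\le(1+M)|x-x_0|,\]
which is \eqref{b1}; if $x_2>x_{02}$ they add to
\[ \sigma(x)-\sigma(x_0)\ge -(x_2-x_{02})-M|x_1-x_{01}|\ge -(1+M)|x-x_0|,\]
which is \eqref{b2}.

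The one point requiring care, and which I expect to be the main (if minor) obstacle, is the measure-theoretic bookkeeping: the monotonicity statement holds only for a.e.\ vertical slice, it must be evaluated at the fixed height $x_{02}$, and \eqref{lipshear} compares $\sigma$ at $(x_1,x_{02})$ and $(x_{01},x_{02})$. To make these steps compatible one should work with the representative of $\sigma$ that is genuinely $M$-Lipschitz in $x_1$ for every $x_2$ (such a representative exists by \eqref{lipshear}, and agrees a.e.\ with the slicewise absolutely continuous one), or simply invoke the continuity of $\us$ noted in Section \ref{intro}. With that representative fixed, the null set $N_1$ of ``bad'' first coordinates $x_1$ satisfies $\scl^2(\{x_0:x_{01}\in N_1\})=0$ and $\scl^2(\{x:x_1\in N_1\})=0$, so the displayed inequalities hold for a.e.\ $x_0$ with $B(x_0,r)\subset Q$ and, for each such $x_0$, for a.e.\ $x\in B(x_0,r)$ in the relevant half-ball; this is exactly the assertion of the lemma, with $C=1+M$.
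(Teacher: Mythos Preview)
Your proposal is correct and follows essentially the same route as the paper: both derive $\sigma_{,2}>-1$ a.e.\ from $E(\us)<+\infty$, pass from $x_0$ to $x$ via the corner $(x_1,x_{02})$, control the horizontal step by \eqref{lipshear} and the vertical step by integrating $\sigma_{,2}>-1$ along the slice, and arrive at $C=1+M$. The paper handles the measure-theoretic bookkeeping more tersely (simply invoking absolute continuity along almost all lines and the phrase ``for almost every $x_0$''), whereas you spell it out; otherwise the arguments are identical.
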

\begin{proof} First note that $E(\us)<+\infty$ implies $\det \nabla \us >0$ almost everywhere.  In view of \eqref{detaff}, this gives $\sigma_{,_{2}} > -1$ almost everywhere.
Let $x_2 < x_{02}$.  Since $\sigma$ is absolutely continuous along almost all lines, we can suppose that for almost every $x_0$ it is the case that
\begin{align*} \sigma(x_1,x_2) & = -\int_{x_2}^{x_{02}} \sigma_{,_{2}}(x_1, s)\,ds + \sigma(x_1,x_{02})  \\& < x_{02}-x_2 + \sigma(x_1,x_{02})\\
& \leq |x-x_0| + |\sigma(x_1,x_{02})-\sigma(x_{01},x_{02})|+\sigma(x_{01},x_{02}) \\
& \leq |x-x_0| + M|x_{1}-x_{01}| + \sigma(x_{01},x_{02}) \\
& \leq (1+M)|x-x_0| + \sigma(x_{01},x_{02}).
\end{align*}
Hence \eqref{b1} holds with $C=1+M$.  The proof of \eqref{b2} can be obtained by exchanging $x$ and $x_0$ in the above, so that the same constant $C$ suffices for both inequalities.\qed
\end{proof}

\begin{remark} Note that \eqref{b1}, which follows from the assumption $\sigma_{,_{2}}>-1$, gives an upper \emph{but not lower} bound on $\sigma(x)-\sigma(x_0)$ in the region $x_2<x_{02}$.  The lower bound will follow from an application of elliptic regularity theory, as we shall see.     Similar comments apply to the inequality \eqref{b2}.\end{remark}

The previous result together with assumption \eqref{lipshear} now allows us to construct variations $u_{\sigma^{\eps}}$ about $\us$ which obey the constraint $\det \nabla u_{\sigma^{\eps}}>0$.  We continue to use the notation $R:=|x-x_0|$.
\begin{proposition}\label{outervar} Let $\us$ belong to $\sca_s$ and assume that \eqref{lipshear} holds.  Let $B(x_0,2r) \subset Q$.  Let
$\eta: Q \to \R$ be a smooth cut-off function such that \[\eta(x)=f(|x-x_0|),\] where $f$ satisfies 
\eqref{F1}, \eqref{F2} and \eqref{fprime}.  Define for each $\eps<0$ the functions
\begin{equation}\label{pm}\sigma^{\eps,\pm}(x):= \sigma(x) + \eps \eta^2(x) (\sigma(x)-\sigma(x_0)\mp C R)^{\pm},\end{equation}
where the constant $C$ is chosen so that 
\begin{align}\label{b3} \sigma(x)-\sigma(x_0) - C R \leq 0 & \quad \textrm{if} \ x_2 <x_{02},\\
\label{b4} \sigma(x) -\sigma(x_0) + C R \geq 0 &  \quad \textrm{if} \ x_2 > x_{02}.
\end{align}
Then for almost every such $x_0$ the shear maps $u_{\sigma^{\eps,\pm}}$ belong to $\sca_s$ and they satisfy
\begin{equation}\label{detbounds} \frac{1}{2} \det \nabla \us \leq \det \nabla u_{\sigma^{\eps,\pm}} \leq \det \nabla \us + C_{\pm}' \eps \end{equation}
where the constants $C_{\pm}'=1+C+\frac{c}{r}||\sigma -\sigma(x_0) \mp CR||_{\infty; B(x_0, 2r)}$. 
\end{proposition}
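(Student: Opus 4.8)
The plan is to write down $\det\nabla u_{\sigma^{\eps,\pm}}$ explicitly via \eqref{detaff} and then read off \eqref{detbounds} by a sign analysis; the point is that the correction terms produced by the truncated perturbations \eqref{pm} are all nonnegative once $\eps<0$, provided one knows that each $w^\pm:=(\sigma-\sigma(x_0)\mp CR)^\pm$ is supported in the appropriate half of $B(x_0,2r)$. First I would record the routine facts: since $\eta^2w^\pm$ is the product of the smooth compactly supported cut-off $\eta^2$ with a truncation of the $W^{1,2}$ function $\sigma-\sigma(x_0)\mp CR$, it lies in $W^{1,2}_0(B(x_0,2r))$, so $\sigma^{\eps,\pm}\in W^{1,2}(Q)$, it agrees with $\sigma$---hence with $\sigma_0$---on $(\partial Q)_D$ because $\eta$ vanishes near $\partial Q$, and $u_{\sigma^{\eps,\pm}}$ is again a shear map; membership in $\sca_s$ therefore reduces to showing $E(u_{\sigma^{\eps,\pm}})<+\infty$. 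Using \eqref{detaff}, the chain rule $\nabla(g^\pm)=\chi_{\{\pm g>0\}}\nabla g$ (valid a.e.\ for $g\in W^{1,2}$), $R=|x-x_0|$ and $\eta_{,2}=f'(R)(x_2-x_{02})/R$, I would get
\[ \det\nabla u_{\sigma^{\eps,\pm}} = \det\nabla\us + \eps\,\partial_{x_2}(\eta^2 w^\pm), \]
where $\partial_{x_2}(\eta^2 w^\pm)=0$ a.e.\ on $\{w^\pm=0\}$ (where \eqref{detbounds} is then immediate) and $\partial_{x_2}(\eta^2 w^\pm)=2\eta\eta_{,2}w^\pm+\eta^2\bigl(\sigma_{,2}\mp C(x_2-x_{02})/R\bigr)$ a.e.\ on $\{w^\pm\neq 0\}$.

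The heart of the argument is the set $\{w^+\neq 0\}$. By \eqref{b3}---which, with $C$ the constant of Lemma \ref{outerspace}, holds at a.e.\ $x$ for a.e.\ $x_0$---this set is contained in $\{x_2\geq x_{02}\}$, so $(x_2-x_{02})/R\geq 0$ there and, because $f'\leq 0$ by \eqref{fprime}, also $\eta_{,2}\leq 0$. Substituting $\sigma_{,2}=\det\nabla\us-1$ and collecting terms gives, a.e.\ on $\{w^+\neq 0\}$,
\[ \det\nabla u_{\sigma^{\eps,+}} = (1+\eps\eta^2)\det\nabla\us + (-\eps)\eta^2 + 2\eps\eta\eta_{,2}w^+ + (-\eps)C\eta^2\tfrac{x_2-x_{02}}{R}, \]
and since $\eps<0$ each of the last three summands is nonnegative (the middle one because $\eps<0$ and $\eta_{,2}\leq 0$). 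The mirror-image computation on $\{w^-\neq 0\}$, which \eqref{b4} confines to $\{x_2\leq x_{02}\}$, where $\eta_{,2}\geq 0$ and $w^-\leq 0$, likewise yields three nonnegative correction terms. The lower bound in \eqref{detbounds} is then immediate: $\det\nabla u_{\sigma^{\eps,\pm}}\geq(1+\eps\eta^2)\det\nabla\us\geq\tfrac12\det\nabla\us$, using $0\leq\eta^2\leq 1$ and restricting to $|\eps|\leq\tfrac12$ (no loss, since the application lets $\eps\to 0$).

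For the upper bound I would discard $\eps\eta^2\det\nabla\us\leq 0$ and bound the three correction terms by $|\eps|$, by $2|\eps|(c/r)\|w^\pm\|_{\infty;B(x_0,2r)}$ (using $|\eta_{,2}|\leq c/r$ from \eqref{F2} and $\eta_{,2}$ supported in the annulus), and by $C|\eps|$ respectively; since $\|w^\pm\|_{\infty;B(x_0,2r)}\leq\|\sigma-\sigma(x_0)\mp CR\|_{\infty;B(x_0,2r)}$, absorbing the factor $2$ into $c$ yields $\det\nabla u_{\sigma^{\eps,\pm}}\leq\det\nabla\us+C'_\pm|\eps|$, which is the upper bound in \eqref{detbounds}. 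This is where ``for a.e.\ $x_0$'' enters: one needs $\sigma$ essentially bounded on $B(x_0,2r)$, which follows from $\sigma\in W^{1,2}(Q)$ together with \eqref{lipshear} (on each line $x_1=\mathrm{const}$, $\sigma$ is $W^{1,2}$ in one variable hence bounded, with a bound finite for a.e.\ $x_1$, and \eqref{lipshear} spreads this to a neighbourhood of $x_0$). Finally, $E(u_{\sigma^{\eps,\pm}})<+\infty$: the term $\int_Q\lambda|\nabla u_{\sigma^{\eps,\pm}}|^2$ is finite because $\nabla u_{\sigma^{\eps,\pm}}\in L^2$, and for $\int_Q h(\det\nabla u_{\sigma^{\eps,\pm}})$ I would split $Q$ at $\{\det\nabla\us<s_0\}$: on this set $h$ is decreasing near $0$, so the lower bound just proved and the doubling hypothesis (H4) give $h(\det\nabla u_{\sigma^{\eps,\pm}})\leq h(\tfrac12\det\nabla\us)\leq K\,h(\det\nabla\us)$, integrable since $E(\us)<+\infty$; on the complement $\det\nabla\us\geq s_0$, and the upper bound together with the at-most-quadratic growth of $h$ implied by (H3) and $\sigma_{,2}\in L^2$ keeps the integral finite.

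I expect the sign bookkeeping of the second paragraph to be the main obstacle: everything rests on aligning the roles of the $+$ and $-$ parts and of the $\mp CR$ shift with the one-sided bounds \eqref{b3} and \eqref{b4}, so that the truncations annihilate $w^\pm$ exactly on the half-ball where $(x_2-x_{02})/R$---and hence $\eta_{,2}$, via $f'\leq 0$---has the sign that would otherwise destroy nonnegativity of the $\eps$-corrections; a sign slip anywhere here would wreck both halves of \eqref{detbounds}.
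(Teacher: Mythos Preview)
Your proof is correct and follows essentially the same approach as the paper: compute $\det\nabla u_{\sigma^{\eps,\pm}}$ explicitly via \eqref{detaff}, use \eqref{b3}--\eqref{b4} to confine $\{w^\pm\neq 0\}$ to the half-ball where $(\nabla R)_2$ (and hence $\eta_{,2}$, via $f'\leq 0$) has the sign that makes all the $\eps$-correction terms nonnegative, then read off both halves of \eqref{detbounds} and check $E<+\infty$ using (H3)--(H4). Your upper bound with $|\eps|$ rather than $\eps$ is in fact the correct reading of \eqref{detbounds}, and your explicit splitting argument for $\int_Q h(\det\nabla u_{\sigma^{\eps,\pm}})\,dx$ spells out what the paper compresses into one line; the only superfluous step is your separate argument for local boundedness of $\sigma$, since $u_\sigma\in\sca_s$ already forces continuity of $\sigma$ via \cite{VG77,Sv88}.
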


\begin{proof}  By Lemma \ref{outerspace}, there is a constant $C$ such that \eqref{b3} and \eqref{b4} hold.   Consider 
\[ \sigma^{\eps,+}(x) = \sigma(x) + \eps \eta^2 (\sigma(x)-\sigma(x_0)-CR)^{+}. \]
It is weakly differentiable and the Jacobian associated with the shear map $u_{\sigma^{\eps,+}}$ is
\begin{align} \nonumber \det \nabla u_{\sigma^{\eps,+}}  =1+\sigma_{,_{2}}(x) + & 2 \eps f(R)f'(R)(\nabla R)_{2}(\sigma(x)- \sigma(x_0)-CR)^{+}+ \\  \label{b5} & \quad \quad \quad  + \eps \eta^2 \sigma_{,_{2}}(x)  - C \eps \eta^2 (\nabla R)_{2} \end{align}
if $\sigma(x)-\sigma(x_0)-CR>0$ and
\[ \det \nabla u_{\sigma^{\eps,+}} = 1+\sigma_{,_{2}}(x)\] otherwise.
In the latter case there is nothing to show.  In the former,  note that 
\[ (\nabla R)_2 = \frac{x_2 - x_{02}}{R}\]
is positive for a.e. $x$ such that $\sigma(x)-\sigma(x_0)-CR>0$.  This is because, by Lemma \ref{outerspace},  $\sigma(x)-\sigma(x_0)-CR \leq 0$ for a.e. $x$ such that $x_2 <x_{02}$, so that, in short, $(\nabla R)_2\xi_+ \geq 0 $ a.e.  .    Slightly rearranging the expression for \eqref{b5} gives
\begin{align}\label{b6} \det \nabla u_{\sigma^{\eps,+}} & =(1+\eps\eta^2)(1+\sigma_{,_{2}}) - \eps \eta^2 + \\ & \nonumber   + 2 |\eps| f(R)|f'(R)|(\nabla R)_{2}(\sigma(x)-\sigma(x_0)-CR)^{+} +C |\eps| \eta^2 (\nabla R)_{2}\end{align}
on the set where $\sigma(x)-\sigma(x_0)-CR > 0$.    Provided $-\frac{1}{2} < \eps < 0$, the first term satisfies 
\[ (1+\eps\eta^2)(1+\sigma_{,_{2}}) \geq \frac{1}{2} \det \nabla \us,\]
and for any $\eps>0$ all other terms are nonnegative.  Hence the lower bound in \eqref{detbounds}.

The upper bound in \eqref{detbounds} follows from \eqref{b6} once the inequalities $|f'(R)| \leq c/r$ and $|f(R)|$ are recalled.   The argument needed for the map $\sigma^{\eps,-}$ is so similar that it is omitted.

By inequality \eqref{detbounds} and hypotheses (H3) and (H4), it follows that
\[\int_{Q} h(\det \nabla u_{\sigma^{\eps,\pm}})\, dx< +\infty\] 
for all $\eps \in (-1/2,0)$.  It is straighforward to check that $\nabla u_{\sigma^{\eps,\pm}}$ belongs to $L^2(Q)$, and so $E(u_{\sigma^{\eps,\pm}})<+\infty$.  Since $\eta$ has compact support in $Q$ it follows that $u_{\sigma^{\eps,\pm}}=u_{\sigma_{0}}$ on $(\partial Q)_{D}$.    Thus, for such $\eps$, all maps $u_{\sigma^{\eps,\pm}}$ belong to $\sca_s$. \qed
\end{proof}

We are now in a position to form a variational inequality which in fact applies to a local minimizer of the energy $E(\cdot)$ in the class $\sca_{s}$.  The following result is analogous to Lemma \ref{ellipt1}.

\begin{lemma}\label{ellipt2}  Let $E$ be defined by \eqref{ee} and let $\us \in \sca_{s}$ be a strong local minimizer of $E$ in $\sca_s$ in the sense that there is $\gamma > 0$ such that 
\begin{equation}\label{slmshear}  E(v) \geq E(\us)  \ \ \forall \ v \in \sca_{s} \textrm{ s.t. } ||v-\us||_{\infty;Q} < \gamma.\end{equation}   Suppose $\sigma$ satisfies condition \eqref{lipshear}.   Then there is a constant $C>0$ such that
\begin{align}\nonumber
& \int_{Q} \lambda ((\nabla (\eta^2))_{2}+\nabla \sigma \cdot \nabla (\eta^2)) \xi_{\pm}   +  \lambda \eta^2 (\sigma_{,_{2}}  \mp C(\nabla R)_{2})\chi_{\pm}\,dx + \\ \nonumber & + \int_{Q} \lambda \eta^2 \nabla \sigma \cdot (\nabla \sigma \mp C \nabla R) \chi_{+} \, dx \\
\label{b10} & \leq \int_{\{x \in Q : \ \det \nabla \us > 1\}}\frac{|h'(\det \nabla \us)|}{2} (|\nabla (\eta^2)|\xi_{\pm} + C\eta^2)\chi_{\pm} \,dx,
\end{align}
where $\eta$ is a smooth cut-off function satisfying \eqref{defeta}, \eqref{F1}, \eqref{F2} and \eqref{fprime}, $\xi_{\pm}(x):=(\sigma(x)-\sigma(x_0) \mp CR)^{\pm}$, and $\chi_{\pm}$ is the indicator function of the set $\{y \in Q: \ \xi_{\pm}(y) \neq 0\}$.
\end{lemma}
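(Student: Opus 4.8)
The plan is to test the strong local minimality of $\us$ against the admissible variations $u_{\sigma^{\eps,\pm}}$ produced in Proposition \ref{outervar} and to pass to the limit $\eps\nearrow 0$. Since $\|u_{\sigma^{\eps,\pm}}-\us\|_{\infty;Q}=|\eps|\,\|\eta^2\xi_{\pm}\|_{\infty;B(x_0,2r)}\to 0$ as $\eps\to 0$ (the truncations $\xi_{\pm}$ being bounded on $B(x_0,2r)$ because $\us$, hence $\sigma$, is continuous) and $u_{\sigma^{\eps,\pm}}\in\sca_s$ for $\eps\in(-\tfrac12,0)$, the minimality inequality \eqref{slmshear} applies once $|\eps|$ is small, and dividing by $\eps<0$ gives
\[
\limsup_{\eps\nearrow 0}\,\frac{E(u_{\sigma^{\eps,\pm}})-E(\us)}{\eps}\;\le\;0 .
\]
I split $E=E_1+E_2$ with $E_1(u)=\lambda\int_Q|\nabla u|^2\,dx$ and $E_2(u)=\int_Q h(\det\nabla u)\,dx$, and abbreviate $d:=\det\nabla\us=1+\sigma_{,_2}$. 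Because $\sigma^{\eps,\pm}=\sigma+\eps\eta^2\xi_{\pm}$, both $\nabla\sigma^{\eps,\pm}$ and $\det\nabla u_{\sigma^{\eps,\pm}}$ are affine in $\eps$; in particular $\det\nabla u_{\sigma^{\eps,\pm}}=d+\eps\,g_{\pm}$ with
\[
g_{\pm}:=\big(\nabla(\eta^2\xi_{\pm})\big)_2=\xi_{\pm}\,(\nabla(\eta^2))_2+\eta^2\big(\sigma_{,_2}\mp C(\nabla R)_2\big)\chi_{\pm},
\]
and, by \eqref{detbounds}, $\det\nabla u_{\sigma^{\eps,\pm}}$ stays above $d/2$ while differing from $d$ by $O(|\eps|)$.

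The quadratic term is routine. Expanding $|\nabla u_{\sigma^{\eps,\pm}}|^2$ and using that $\nabla(\eta^2\xi_{\pm})=\xi_{\pm}\nabla(\eta^2)+\eta^2(\nabla\sigma\mp C\nabla R)\chi_{\pm}\in L^2(Q)$ (as $\xi_{\pm}$ is bounded, $\nabla(\eta^2)$ is bounded, and $\nabla\sigma\in L^2$), the dominated-convergence argument used for \eqref{yoyoma} in Lemma \ref{ellipt1} gives
\[
\lim_{\eps\nearrow 0}\frac{E_1(u_{\sigma^{\eps,\pm}})-E_1(\us)}{\eps}=2\lambda\int_Q\Big(\nabla\sigma\cdot\nabla(\eta^2\xi_{\pm})+(\nabla(\eta^2\xi_{\pm}))_2\Big)\,dx ,
\]
and inserting the expression for $\nabla(\eta^2\xi_{\pm})$ one checks this equals exactly twice the left-hand side of \eqref{b10}.

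The delicate part, just as in Lemma \ref{ellipt1}, is $E_2$, and this is where I expect the main obstacle: no integrable bound on $|h'(d)|$ is available a priori because $h'$ blows up as $d\to 0+$ (hypothesis (H1)). I would fix a small $\delta_0\in(0,1)$ and split $Q=\{d\ge\delta_0\}\cup\{d<\delta_0\}$. On $\{d\ge\delta_0\}$, \eqref{detbounds} confines $d+t\eps g_{\pm}$ ($t\in[0,1]$) to a range where, by the $C^1$ regularity (H0) for small values and the bound $h'(s)\le 2q_1s+q_2$ of (H3) for large ones, $|h'|\lesssim 1+d$; since also $|g_{\pm}|\lesssim 1+|\sigma_{,_2}|\lesssim 1+d$ and $d=1+\sigma_{,_2}\in L^2(Q)$ (from $E_1(\us)<+\infty$), the difference quotients on this set are dominated by a multiple of $1+d^2\in L^1(Q)$, so dominated convergence yields $\int_{\{d\ge\delta_0\}}h'(d)\,g_{\pm}\,dx$. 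The structural observation that makes the argument close is that $g_{\pm}\le 0$ wherever $d\le 1$: off $\{\xi_{\pm}\ne 0\}$ it vanishes, and on $\{\xi_{\pm}\ne 0\}$ Lemma \ref{outerspace} together with the choice of $C$ forces $\pm(\nabla R)_2\ge 0$, so that $\mp C\eta^2(\nabla R)_2\le 0$ and $\xi_{\pm}(\nabla(\eta^2))_2\le 0$ (since $f'\le 0$ by \eqref{fprime}), while $\eta^2\sigma_{,_2}=\eta^2(d-1)\le 0$. Hence: on $\{d<\delta_0\}$, where $\det\nabla u_{\sigma^{\eps,\pm}}$ stays in $(0,1)$ for small $|\eps|$ and $h$ is decreasing, $h(d+\eps g_{\pm})-h(d)\le 0$, so the corresponding difference quotient is $\ge 0$ and its negative contributes nothing in the $\limsup$; and on $\{\delta_0\le d\le 1\}$ the limit integrand $-h'(d)g_{\pm}$ is $\le 0$ and is discarded. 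What survives is $\{d>1\}$: there $h'(d)=|h'(d)|\ge 0$ and $\sigma_{,_2}=d-1>0$, so the term $-\eta^2\sigma_{,_2}h'(d)\le 0$ appearing in $-h'(d)g_{\pm}$ is dropped, while $|(\nabla(\eta^2))_2|\le|\nabla(\eta^2)|$ and $|(\nabla R)_2|\le 1$ give $-h'(d)g_{\pm}\le|h'(d)|(|\nabla(\eta^2)|\xi_{\pm}+C\eta^2)\chi_{\pm}$. Combining these estimates with the displayed $\limsup\le 0$ shows that twice the left-hand side of \eqref{b10} is at most $\int_{\{d>1\}}|h'(d)|(|\nabla(\eta^2)|\xi_{\pm}+C\eta^2)\chi_{\pm}\,dx$; dividing by $2$ yields \eqref{b10}, and the $-$ case follows identically from $u_{\sigma^{\eps,-}}$. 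The crux of the whole proof is thus the limit in $E_2$, and it turns entirely on the sign of $g_{\pm}$ on $\{d\le 1\}$, which is exactly what the one-sided bounds of Lemma \ref{outerspace} are tailored to provide.
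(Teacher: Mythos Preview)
Your argument is correct and follows the paper's proof closely: test minimality against $u_{\sigma^{\eps,\pm}}$, pass to the limit in the Dirichlet part by dominated convergence, split the $h$--part at a level $\delta_0$, use dominated convergence on $\{d\ge\delta_0\}$, and then exploit the sign of $g_{\pm}$ on $\{d\le 1\}$ (which, as you correctly identify, is exactly what Lemma~\ref{outerspace} delivers via $\pm(\nabla R)_2\chi_{\pm}\ge 0$ and $f'\le 0$) to discard the contribution from $\{d\le 1\}$ and bound the contribution from $\{d>1\}$.

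There is one genuine difference on the set $\{d<\delta_0\}$. The paper splits this set further according to whether $d+\eps\zeta<1$ or $d+\eps\zeta\ge 1$; the first subset is handled by monotonicity of $h$, while the second is shown to have measure tending to zero and is controlled using the growth bound (H3) together with the choice of $\delta_0$ via (H1). You instead observe that on $\{d<\delta_0\}$ one has $|\sigma_{,_2}|=|d-1|<1$, so $|g_{\pm}|$ is bounded by a constant there (depending on $r$, $C$ and $\|\xi_{\pm}\|_{\infty;B(x_0,2r)}$); hence for $|\eps|$ small enough \emph{uniformly}, $\det\nabla u_{\sigma^{\eps,\pm}}=d+\eps g_{\pm}$ stays in $(0,1)$, and the second subset is simply empty. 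This is a legitimate and cleaner shortcut in the present setting. The paper's route has the advantage of not using pointwise boundedness of $g_{\pm}$ on $\{d<\delta_0\}$ (only $g_{\pm}\in L^2$), so it would survive if $\xi_{\pm}$ were merely $L^2$ rather than $L^\infty$; but under the standing continuity of $\sigma$ your simplification is valid.
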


\begin{proof}  Define $u_{\sigma^{\eps,\pm}}$ as in Proposition \ref{outervar} and note that $u_{\sigma^{\eps,\pm}} \to \us$ in $W^{1,\infty}(Q,\R^2)$ as $\eps \to 0$.  In particular, since $u_{\sigma^{\eps,\pm}}$ belong to $\sca_s$ and $\us$ is by hypothesis a local minimizer in that class, it follows from \eqref{slmshear} that
\begin{equation}\label{b7} \frac{E(u_{\sigma^{\eps,\pm}})-E(\us)}{\eps} \leq 0\end{equation}
for all sufficiently small $\eps <0$.  

Let us focus on the variations $u_{\sigma^{\eps,+}}$. 
The quotient in \eqref{b7} consists of two terms, the first of which obeys
\begin{align}\nonumber \lim_{\eps \to 0}\int_{Q} & \frac{\lambda (|\nabla u_{\sigma^{\eps,+}}|^{2}-|\nabla \us|^2)}{2\eps}\,dx  = \int_{Q}  ((\nabla (\eta^2))_{2}+\nabla \sigma \cdot \nabla (\eta^2)) \xi_{\pm} \,dx + \\ \label{b11} & + \int_{Q} \lambda \eta^2 (\sigma_{,_{2}} - C(\nabla R)_{2})\chi_{+} + \lambda \eta^2 \nabla \sigma \cdot (\nabla \sigma - C \nabla R) \chi_{+} \, dx.\end{align}

The second term in the quotient \eqref{b7} essentially involves taking the derivative of the energy with respect to the singular term $\int_{Q} h(\det u_{\sigma^{\eps,+}})\,dx$.  To do this we use ideas from the proof of Lemma \ref{ellipt1}, beginning by rewriting \eqref{b7} as 
\begin{equation}\label{r1} \int_{Q} \frac{\lambda (|\nabla u_{\sigma^{\eps,+}}|^{2}-|\nabla \us|^2)}{\eps}\,dx  \leq - \int_{Q}  \frac{(h(\det u_{\sigma^{\eps,+}})-h(\det \nabla \us))}{\eps}\,dx.\end{equation}

  Let $\delta_0 >0$ be a small  positive constant to be chosen later and define
\[ Q_{0}^+ = \{ x \in Q: \det \nabla \us > \delta_0\}.\]
Now, by \eqref{b5}, $\det \nabla u_{\sigma^{\eps,+}}=\det \nabla \us + \eps \zeta$
where 
\begin{equation}\label{z1} \zeta(x):= (\nabla (\eta^2))_{2}\xi_{+} + \eta^2 (\sigma_{,_{2}}(x) - C (\nabla R)_{2})\chi_+. \end{equation}
A dominated convergence argument then yields
\begin{align*}
\limsup_{\eps \nearrow 0} \int_{Q_{0}^{+}} & \frac{-(h(\det u_{\sigma^{\eps,+}})-h(\det \nabla \us))}{\eps}\,dx = \int_{Q_0^+}-h'(\det \nabla \us) \zeta \, dx \\
& \hspace{30mm}= \int_{\{x \in Q_0^+: \ \det \nabla \us >1\}} -h'(\det \nabla \us) \zeta \,dx + \\
& \hspace{30mm}\  + \int_{\{x \in Q_0^+: \ \det \nabla \us \leq 1\}} -h'(\det \nabla \us) \zeta \,dx.  \end{align*}
Noting that $-h'(\det \nabla \us) \geq 0$ when $\det \nabla \us \leq 1$, and that $\sigma_{,_{2}}<0$ on the same set, it follows that $-h'(\det \nabla \us) \zeta(x) \leq 0$ on $\{x \in Q_0^+: \ \det \nabla \us \leq 1\}$.  (Here one uses that $(\nabla R)_2 \chi_+ \geq 0$.)  Thus the last integral above is bounded above by $0$.   In view of \eqref{z1}, the fact that $h$ is increasing on $[1,+\infty)$ and since $\sigma_{,_{2}} >0$ when $\det \nabla \us > 1$,  the penultimate integral is bounded above by 
\[ \int_{\{x \in Q_0^+ : \ \det \nabla \us > 1\}} |h'(\det \nabla \us)|(|\nabla(\eta^2))|\xi_+ + C\eta^2)\chi_+ \,dx.\]  
In summary,
\begin{align}\nonumber  \limsup_{\eps \nearrow 0} & \int_{Q_{0}^{+}} \frac{-(h(\det u_{\sigma^{\eps,+}})-h(\det \nabla \us))}{\eps} \,dx \\ \label{b12} &\leq  \int_{\{x \in Q : \ \det \nabla \us > 1\}}|h'(\det \nabla \us)| (|\nabla (\eta^2)|\xi_+ + C\eta^2)\chi_+ \,dx.\end{align}

Now let
\[ Q_{0}^- = \{ x \in Q: \ \det \nabla \us \leq \delta_0\}\]
and write 
\begin{align*}\int_{Q_0^-}\frac{h(d+ \eps \zeta) - h(d)}{|\eps|} \,dx & =\int_{Q_0^- \cap\{ d+\eps \zeta <1\}}\frac{h(d + \eps \zeta) - h(d)}{|\eps|} \,dx \\ & + \int_{Q_0^- \cap \{d +\eps \zeta \geq 1\}}\frac{h(d + \eps \zeta) - h(d)}{|\eps|} \,dx,\end{align*}
where $d:= \det \nabla \us$ and $Q_0^- \cap \{d+\eps \zeta < 1\}$ is shorthand for $\{x \in Q_0^-: \ d(x) + \eps \zeta(x) < 1\}$.   Notice that $d+\eps \zeta \geq  d$ on the set where $d < \delta_0$ (since $\zeta (x) \leq 0$ there, as can be seen by inspecting \eqref{z1}), and so, because $h(s)$ is decreasing for $0 < s < 1$, it follows that 
\[ \frac{h(d+\eps \zeta) - h(d)}{|\eps|} \leq 0 \ \ \textrm{if} \  x \in \{x \in Q_0^-: \ d(x) + \eps \zeta(x) < 1\}.\] 

To estimate the second integral we first use (H3) to deduce that there is a constant $q_3$ such that
\[h(s) \leq q_1 s^2 + q_2 s + q_3 \ \ \textrm{if } s \geq 1.\] 
This, when coupled with the fact that that if $d <\delta_0<1$ then $-h(d) \leq -h(\delta_0)$, gives
\begin{align}\nonumber \int_{Q_0^- \cap \{d +\eps \zeta \geq 1\}}\frac{h(d + \eps \zeta) - h(d)}{|\eps|}\,dx & \leq \int_{Q_0^- \cap \{d +\eps \zeta \geq 1\}} \!\!\!\!\frac{q_1 \delta_0^2+q_2^+\delta_0+q_3-h(\delta_0)}{|\eps|}\,dx \\
&\label{b8} - \int_{Q_0^- \cap \{d +\eps \zeta \geq 1\}} (2q_1d+q_2) \zeta + \eps q_1 \zeta^2\,dx. 
\end{align}
By applying (H1), the first integral on the right-hand side of \eqref{b8} is bounded above by zero provided $\delta_0$ is chosen so small that
\[ q_1 \delta_0^2+q_2^+\delta_0+q_3-h(\delta_0) \leq 0.\]

The second integral on the right-hand side of \eqref{b8} satisfies 
\begin{align}\label{b9}
\limsup_{\eps \nearrow 0} -\int_{Q_0^- \cap \{d +\eps \zeta \geq 1\}} (2q_1d+q_2) \zeta + \eps q_1 \zeta^2 \,dx & \leq 0.
\end{align}
To see this we let $S_{\eps}:=\{x \in Q_0^-: \ d(x) +\eps \zeta(x) \geq 1\}$ and note that
since $\eps \zeta(x) \geq (1-\delta_0)$ on $S_{\eps}$ and $\zeta$ belongs to $L^{2}(Q)$, it must be that $\scl^{2}(S_{\eps}) \to 0$ as $\eps \to 0$.  Hence, using H\"{o}lder's inequality, for example, together with $\zeta \in L^2(Q)$, 
\begin{align*} \int_{S_{\eps}} \zeta \,dx & \to 0 \ \ \textrm{as } \eps \to 0.\end{align*}
The fact that $\zeta$ belongs to $L^{2}(Q)$ also clearly implies that $\int_{S_{\eps}} \eps \zeta^2 \,dx$ tends to zero as $\eps \to 0$.  Hence, when we recall that $d \leq \delta_0$ on the set over which we are integrating, \eqref{b9} holds, from which it follows that
\begin{align}\label{b14}\limsup_{\eps \nearrow 0} \int_{Q_0^-}\frac{h(\det \nabla \us + \eps \zeta) - h(\det \nabla \us)}{|\eps|} \,dx & \leq 0.\end{align}
Finally, \eqref{b10} follows from \eqref{r1} together with \eqref{b11}, \eqref{b12} and \eqref{b14}.   \qed
 \end{proof}

The preceding lemma now allows us to improve the regularity of $\sigma$ in the $x_2$ direction, as follows.

\begin{theorem}  Let $E$ be defined by \eqref{ee} and let $\us$ be a strong local minimizer of the functional $E$ (in the sense of \eqref{slmshear}) in the class $\sca_s$.  Assume that condition \eqref{lipshear} holds.  Then $\us$ is locally H\"{o}lder continuous in the set $Q$.
\end{theorem}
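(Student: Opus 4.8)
The plan is to mirror the proof of Theorem \ref{t1}, with the variational inequality \eqref{b10} of Lemma \ref{ellipt2} playing the role of \eqref{reg1}, and the shifted variations $u_{\sigma^{\eps,\pm}}$ of Proposition \ref{outervar} playing the role of those of Proposition \ref{p:basic-calc}. The goal is to extract a Caccioppoli-type inequality, hole-fill it, apply Lemma \ref{l:growth} to obtain a Dirichlet growth estimate $\int_{B(x_0,\rho)}|\nabla\sigma|^2\,dx\le c\rho^{\alpha}$, and then invoke Morrey's Dirichlet growth theorem (\cite[Theorem 3.5.2]{Mo66}); since $\us(x)=x+\sigma(x)e_2$, local H\"older continuity of $\sigma$ immediately gives that of $\us$.

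First I would fix a point $x_0$ for which the conclusions of Lemma \ref{outerspace} and both ($\pm$) versions of \eqref{b10} hold (a full-measure set of $x_0$), take $\eta(x)=f(|x-x_0|)$ with $f$ as in \eqref{F1}--\eqref{fprime}, and add the two versions of \eqref{b10}. On the left-hand side the terms $\lambda\eta^2\nabla\sigma\cdot(\nabla\sigma\mp C\nabla R)\chi_{\pm}$ dominate, since $|\nabla R|=1$, the quantity $\frac{\lambda}{2}\eta^2|\nabla\sigma|^2(\chi_++\chi_-)-c\,\eta^2(\chi_++\chi_-)$ after a Young inequality; the remaining left-hand terms, which all carry a factor $\xi_{\pm}$ or $\nabla(\eta^2)$, I would move to the right and estimate using $|\sigma_{,_{2}}|\le|\nabla\sigma|$, $|\nabla(\eta^2)|=2\eta|\nabla\eta|\le 2c/r$ with support in $B(x_0,2r)\setminus B(x_0,r)$, and a Poincar\'e inequality for $\xi_{\pm}$: by \eqref{b3}--\eqref{b4} each of $\xi_{+},\xi_{-}$ vanishes on (at least) a half of $B(x_0,2r)$, so $\|\xi_{\pm}\|_{L^2(B(x_0,2r))}\le Cr\,\|\nabla\xi_{\pm}\|_{L^2(B(x_0,2r))}$, while $|\nabla\xi_{\pm}|\le(|\nabla\sigma|+C)\chi_{\pm}$. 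The right-hand side of \eqref{b10} is handled with (H3): on $\{\det\nabla\us>1\}$ one has $|h'(\det\nabla\us)|\le 2q_1\det\nabla\us+q_2=2q_1(1+\sigma_{,_{2}})+q_2\le C(1+|\nabla\sigma|)$, so that term is bounded by $C\int(1+|\nabla\sigma|)(|\nabla(\eta^2)|\xi_{\pm}+C\eta^2)\chi_{\pm}\,dx$, each piece of which is either absorbable into the good term or lower order. Collecting, and using $\eta\equiv1$ on $B(x_0,r)$, this should give
\[ \int_{B(x_0,r)}|\nabla\sigma|^2(\chi_++\chi_-)\,dx\le C'r^2+C'\int_{B(x_0,2r)\setminus B(x_0,r)}|\nabla\sigma|^2\,dx \]
with $C'$ independent of $x_0$.

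To upgrade this to a bound on the full $\int_{B(x_0,r)}|\nabla\sigma|^2$ one must deal with the set $Q^0:=\{x: |\sigma(x)-\sigma(x_0)|\le C|x-x_0|\}$, off which $\chi_++\chi_-\equiv1$. On $Q^0$ the minimality of $\us$ via the variations $u_{\sigma^{\eps,\pm}}$ conveys nothing, since these variations are inert there; instead I would argue directly: condition \eqref{lipshear} gives $|\sigma_{,_{1}}|\le M$, and $\det\nabla\us>0$ together with \eqref{detaff} gives $\sigma_{,_{2}}>-1$, so $|\nabla\sigma|^2\le M^2+1$ wherever $\det\nabla\us\le1$; on $Q^0\cap\{\det\nabla\us>1\}$ the one-sided estimates of Lemma \ref{outerspace}, combined with the defining inequality of $Q^0$ (integrated along vertical segments issuing from $x_0$, using absolute continuity on almost every vertical line together with $\sigma_{,_{2}}>-1$), force $\sigma_{,_{2}}$ to stay bounded there, so that $\int_{B(x_0,r)\cap Q^0}|\nabla\sigma|^2\,dx\le C''r^2$. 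Adding this to the previous inequality gives $\int_{B(x_0,r)}|\nabla\sigma|^2\,dx\le C'''r^2+C'''\int_{B(x_0,2r)\setminus B(x_0,r)}|\nabla\sigma|^2\,dx$.

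Widman's hole-filling trick then converts this into $\int_{B(x_0,r)}|\nabla\sigma|^2\,dx\le\mu r^2+\mu\int_{B(x_0,2r)}|\nabla\sigma|^2\,dx$ with $\mu=C'''/(C'''+1)\in(0,1)$, and Lemma \ref{l:growth} (with $p=2$) yields $\int_{B(x_0,r)}|\nabla\sigma|^2\,dx\le c\,r^{\alpha}$ for $0<r<r'$ and almost every $x_0$, hence for every $x_0$ since $x_0\mapsto\int_{B(x_0,r)}|\nabla\sigma|^2\,dx$ is continuous. Morrey's Dirichlet growth theorem then gives $\sigma\in C^{0,\alpha/2}_{\mathrm{loc}}(Q)$, and therefore $\us$ is locally H\"older continuous in $Q$. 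I expect the main obstacle to be precisely the inactive-set estimate of the third paragraph: because the admissible variations are frozen on $\{|\sigma(\cdot)-\sigma(x_0)|\le C|\cdot-x_0|\}$, the Euler--Lagrange inequality says nothing there, and the required control of $\nabla\sigma$ on that set must be recovered entirely from \eqref{lipshear}, the sign constraint $\sigma_{,_{2}}>-1$, and the geometry built into $Q^0$; a secondary, more routine, difficulty is tracking the constants through the Young and Poincar\'e steps so that the absorption is legitimate and $C'''$ is genuinely independent of $x_0$.
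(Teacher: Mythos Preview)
Your overall strategy (Caccioppoli from \eqref{b10}, hole-filling, Lemma \ref{l:growth}, Morrey) is right, and your treatment of the active sets $\{\chi_{\pm}=1\}$ is essentially what the paper does. The gap is exactly where you suspected: the inactive set $Q^{0}=\{|\sigma(\cdot)-\sigma(x_0)|\le C|\cdot-x_0|\}$.

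Your claim that on $Q^{0}\cap\{\det\nabla\us>1\}$ the constraints ``force $\sigma_{,_{2}}$ to stay bounded'' is not correct, and no such pointwise (or even $L^{2}$-average) bound follows from the hypotheses. The only information available on $Q^{0}$ is $\sigma_{,_{2}}>-1$, $|\sigma_{,_{1}}|\le M$, and the \emph{oscillation} bound $|\sigma(x)-\sigma(x_0)|\le CR$. The latter constrains integrals of $\sigma_{,_{2}}$ along vertical segments, not its size: one can have $\sigma_{,_{2}}$ equal to $n$ on a subinterval of length $O(1/n)$ (so that $\sigma$ moves by $O(1)$ and the inequality defining $Q^{0}$ is respected), making $\int_{B(x_0,r)\cap Q^{0}}|\sigma_{,_{2}}|^{2}$ of order $n$ and defeating the estimate $\int_{B(x_0,r)\cap Q^{0}}|\nabla\sigma|^{2}\le C''r^{2}$. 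Since the variations $u_{\sigma^{\eps,\pm}}$ are inert on $Q^{0}$, minimality gives you nothing further there, and the argument as written cannot close.

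The paper's way out is to \emph{not} aim for Dirichlet growth of $|\nabla\sigma|^{2}$ at all. Keeping the factor $\chi_{\pm}$ on the right-hand side (via the annular Poincar\'e inequality \eqref{taptap123} applied to $\xi_{\pm}$, which vanish at $x_0$) lets you hole-fill the quantities $\phi_{\pm}(r)=\int_{B(x_0,r)}|\nabla\sigma|^{2}\chi_{\pm}\,dx$ separately and obtain $\phi_{\pm}(r)\le cr^{\alpha}$. Since $|\nabla\xi_{\pm}|^{2}\le 2(|\nabla\sigma|^{2}+C^{2})\chi_{\pm}$, this yields Dirichlet growth for $\xi_{\pm}$, hence H\"older continuity of $\xi_{\pm}$ by Morrey. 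The inactive set then takes care of itself at the level of $\sigma$ (not $\nabla\sigma$): if $x\in Q^{0}$ then by definition $|\sigma(x)-\sigma(x_0)|\le CR\le CR^{\alpha}$; if $x\notin Q^{0}$ then one of $\xi_{+}(x),\xi_{-}(x)$ equals $|\sigma(x)-\sigma(x_0)\mp CR|$ and its H\"older continuity at $x_0$ gives $|\sigma(x)-\sigma(x_0)|\le cR^{\alpha}+CR$. Either way $\sigma$ is H\"older at $x_0$, and hence so is $\us$. In short: prove regularity of $\xi_{\pm}$, not of $\sigma$, and use the trivial Lipschitz bound on $Q^{0}$ only at the very end.
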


\begin{proof}

The proof has two steps.\\[1mm]

\noindent {\bf Step 1}  Let $x_0$ belong to $B(z,\delta) \subset Q$ and choose $\delta$ small enough that the conditions of Lemma \ref{ellipt2} apply, and hence, in particular that \eqref{b10} holds.   Reusing the notation of Lemma \ref{ellipt2},  rearranging \eqref{b10} and applying standard estimates yields
\begin{align*}\int_{Q} \lambda \eta^2 |\nabla \sigma|^2 \chi_+ \,dx  & \leq \int_{Q}   \lambda \eta^2 ((1+C)|\nabla \sigma|  + C)\chi_{+} +  \lambda |\nabla(\eta^2)|(1+|\nabla \sigma|)\xi_+ \,dx \\&   + 
\int_{\{x \in Q : \ \det \nabla \us > 1\}}\frac{|h'(\det \nabla \us)|}{2} (|\nabla(\eta^2)|\xi_+ + C\eta^2)\chi_+ \,dx.
\end{align*}
Now, by (H3), 
\[ |h'(\det \nabla \us)| \leq 2q_1 + q_2^+ + 2q_1 |\nabla \sigma| \ \textrm{if } \det \nabla \us \geq 1.\]
This therefore gives
\begin{align*}\int_{Q} \lambda \eta^2 |\nabla \sigma|^2 \chi_+ \,dx & \leq \int_{Q}   \lambda\eta^2((1+C)|\nabla \sigma|  + C)\chi_{+} +  \lambda |\nabla(\eta^2)|(1+|\nabla \sigma|)\xi_+ \,dx \\&   + 
\int_{\{x \in Q : \ \det \nabla \us > 1\}}(q_4 + q_1 |\nabla \sigma|)(|\nabla(\eta^2)|\xi_+ + C\eta^2)\chi_+ \,dx,
\end{align*}
where $q_4:=q_1 + q_2^+/2 $.
By repeatedly applying the inequality $2ab \leq k^2 a^2 + (b/k)^2$ for suitable choices of $a$, $b$ and $k$, it is straightforward to show that there are constants $B_1$ and $B_2$ depending only on $\lambda$, $C$ and $q_1$, $q_2$ and $q_4$ such that
\begin{align*}\int_{Q} \eta^2 |\nabla \sigma|^2 \chi_+ \,dx & \leq \int_{Q} B_1 \eta^2  \chi_+  + B_2 (\eta^2 + |\nabla \eta|^2)(\xi_+)^2 + \frac{\eta^2}{2} |\nabla \sigma|^2\,\chi_+ dx.  \end{align*}
Applying the bound $|\nabla \eta| \leq c/r$ on the support $B(x_0,2r)\setminus B(x_0,r)$ of $ \nabla \eta$ and the fact that $\xi_+$ is bounded (since it is continuous), we obtain
\begin{align}\label{b16}\int_{Q} \frac{\eta^2}{2}  |\nabla \sigma|^2 \chi_+ \,dx & \leq B_1 r^2  + \frac{B_2}{r^2}\int_{B(x_0,2r)\setminus B(x_0,r)}(\xi_+)^2  \,dx,\end{align}
where $B_1$ and $B_2$ are possibly different constants, a convention we maintain for the rest of the proof.    Lemma \ref{app1} applies to the integral on the right-hand side of \eqref{b16}, with the result that  
\[ \int_{B(x_0,r)}|\nabla \sigma|^2 \chi_+ \,dx \leq B_1 r^2 + B_2 \int_{B(x_0,2r)\setminus B(x_0,r)} |\nabla \sigma|^2 \chi_+ + C^2 \chi_+ \,dx.\]
Note that we have used the easily verified fact that $|\nabla \xi_{+}|^2 \leq 2(|\nabla \sigma|^2 + C^2) \chi_{+}$.  Again employing Widman's hole-filling technique (that is, adding $B_2 \int_{B(x_0,r)}|\nabla \sigma|^2 \chi_+\,dx$ to both sides) we obtain
\begin{align}\phi_+(r) &\leq B_{1}r^2 + \mu \phi_+(2r)\end{align}
where $\mu : =B_2 /(B_2 + 1)$ and $\phi_+(r):=\int_{B(x_0,r)} |\nabla \sigma|^2 \chi_+ \,dx$. By Lemma \ref{l:growth}, there is a constant $\alpha >0$ depending on the constants $B_1$ and $B_2$ and $r'>0$ independent of $x_0$ such that $\phi_+(r) \leq B_3 r^\alpha$ for all $r<r'$.  A similar argument with $\xi_{-}$ in place of $\xi_+$ and $\chi_{-}$ in place of $\chi_{+}$ yields $\phi_{-}(r) \leq B_3 r^{\alpha}$ for $r<r'$, where $\phi_{-}(r) = \int_{B(x_0,r)}|\nabla \sigma|^2 \chi_{-} \,dx$. 
\\[2mm]
{\bf Step 2} Since $|\nabla \xi_{\pm}|^2 \leq 2(|\nabla \sigma|^2 + C^2) \chi_{\pm}$, it follows from Step 1 that there is a constant $c>0$ independent of $x_0$ and $r$ such that the functions $\xi_{\pm}$ satisfy  
\begin{align}\label{xiplusminus}   \int_{B(x_0,r)} |\nabla \xi_{\pm}|^2 \,dx \leq c r^\alpha
\end{align}
for all $r<r'$ and almost every $x_0$ in $B(z,\delta)$.  Arguing as in the proof of Theorem \ref{t1}, \eqref{xiplusminus} holds for all $x_0$ in $B(z,\delta)$, and hence by Morrey's Dirichlet growth theorem, (\cite[Theorem 3.5.2]{Mo66}), each function $(\sigma(x)-\sigma(x_0) \mp CR)^{\pm}$ is locally H\"{o}lder continuous.    Let $R<r'$ and consider $\sigma(x)-\sigma(x_0)-CR$.  If this quantity is positive then the H\"{o}lder continuity of $\xi_{+}$ at $x_0$ gives
\[|\sigma(x)-\sigma(x_0)-CR|\leq cR^{\alpha}\]
and hence
\begin{equation}\label{holdersigmaplus}|\sigma(x) - \sigma(x_0)| \leq c' R^{\alpha}.\end{equation}
If $\sigma(x)-\sigma(x_0)-CR \leq 0$ then there are two possibilities depending on whether  $\sigma(x)-\sigma(x_0)+CR \geq 0$ or $\sigma(x)-\sigma(x_0)+CR < 0$.  If $\sigma(x)-\sigma(x_0)+CR \geq 0$ then 
\[    -CR \leq \sigma(x)-\sigma(x_0) \leq CR, \]
and so certainly \eqref{holdersigmaplus} again holds (since $\alpha \leq 1$).   If $\sigma(x)-\sigma(x_0) + CR <0$ then the H\"{o}lder continuity of $\xi_{-}$ at $x_0$ gives 
\[|\sigma(x)-\sigma(x_0)+ CR|\leq cR^{\alpha}\]
and hence
\begin{equation}\label{holdersigmaminus}|\sigma(x) - \sigma(x_0)| \leq c' R^{\alpha}.
\end{equation}
Hence $\sigma$ is H\"{o}lder continuous on compact subsets of $Q$, which immediately implies the same property for $\us$. \qed
\end{proof}

\setcounter{section}{0}
\setcounter{equation}{0}
\renewcommand{\theequation}{\thesection.\arabic{equation}}
\setcounter{definition}{0}

\appendix
\section*{Appendix}
\renewcommand{\thesection}{A} 

The following result is implicit in the proof of \cite[Lemma 1, Section 4.5.2]{EG92}.    We give a short proof here because the Poincar\'{e} inequality used in Theorem \ref{t1} above depends critically upon it.

\begin{lemma}\label{app1}  Let $u \in W^{1,2}(B(x_0,3r),\R^2)$ be continuous, where $B(x_0,2r)$ is the ball in $\R^{2}$ of radius $2r$ and centre $x_0$.  Then 
\begin{equation}\label{taptap}\int_{B(x_0,2r)\setminus B(x_0,r)}|u(w)-u(x_0)|^2\,dw \leq \frac{7 r^3}{3}\int_{B(x_0,2r)\setminus B(x_0,r)}|\nabla u(w)|^{2}|w-x_0|^{-1}\,dw.\end{equation}    In particular,
\begin{equation}\label{taptap123} 
\int_{B(x_0,2r)\setminus B(x_0,r)}|u(w)-u(x_0)|^2\,dw \leq \frac{7 r^2}{3}\int_{B(x_0,2r)\setminus B(x_0,r)}|\nabla u(w)|^{2}\,dw.
\end{equation}

\end{lemma}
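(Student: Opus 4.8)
The plan is to pass to polar coordinates centred at $x_0$ and to reduce \eqref{taptap} to a one-dimensional estimate along rays, followed by Fubini's theorem. After a translation we may take $x_0 = 0$ and write a generic point of $B(0,3r)$ as $w = \rho\,e(\theta)$ with $\rho \in (0,3r)$, $\theta \in [0,2\pi)$, so that $dw = \rho\,d\rho\,d\theta$ and hence $|\nabla u(w)|^2|w|^{-1}\,dw = |\nabla u(\rho e(\theta))|^2\,d\rho\,d\theta$. The key ingredient is that, $u$ being continuous and in $W^{1,2}(B(0,3r),\R^2)$, for $\mathcal{H}^1$-almost every direction $e(\theta)$ the map $t \mapsto u(t\,e(\theta))$ is absolutely continuous on each interval $[\varepsilon,3r)$, with $\tfrac{d}{dt}u(t e(\theta)) = \nabla u(t e(\theta))\,e(\theta)$ for a.e.\ $t$, and with $u(t e(\theta)) \to u(0)$ as $t \to 0^+$. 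I would obtain this from the ACL characterisation of Sobolev functions (read along rays emanating from $0$ rather than along coordinate lines): the polar change of variables makes $\int_0^{3r}|\nabla u(t e(\theta))|^2 t\,dt$ finite for a.e.\ $\theta$, which gives absolute continuity away from the origin, and the continuity of $u$ pins down the limiting value there.

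Granting this, fix such a $\theta$ and $\rho \in (r,2r)$. For $0 < \varepsilon < \rho$ one has $u(\rho e(\theta)) - u(\varepsilon e(\theta)) = \int_\varepsilon^\rho \nabla u(t e(\theta))\,e(\theta)\,dt$, so Cauchy--Schwarz gives
\[ |u(\rho e(\theta)) - u(\varepsilon e(\theta))|^2 \;\le\; \rho \int_\varepsilon^{2r} |\nabla u(t e(\theta))|^2\,dt ; \]
letting $\varepsilon \to 0^+$ (continuity of $u$ on the left, monotone convergence on the right) yields $|u(\rho e(\theta)) - u(0)|^2 \le \rho \int_0^{2r} |\nabla u(t e(\theta))|^2\,dt$, an inequality that holds trivially if the right-hand side is infinite. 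Multiplying by $\rho$, integrating over $\rho \in (r,2r)$ and $\theta \in (0,2\pi)$, using $\int_r^{2r}\rho^2\,d\rho = \tfrac{7r^3}{3}$, and undoing the polar change of variables, one obtains
\[ \int_{B(x_0,2r)\setminus B(x_0,r)} |u(w) - u(x_0)|^2\,dw \;\le\; \frac{7r^3}{3}\int_0^{2\pi}\!\!\int_0^{2r} |\nabla u(t e(\theta))|^2\,dt\,d\theta \;=\; \frac{7r^3}{3}\int_{B(x_0,2r)} \frac{|\nabla u(w)|^2}{|w-x_0|}\,dw , \]
which is \eqref{taptap} (the radial integral runs over $(0,2r)$, so the right-hand side is most transparently written over $B(x_0,2r)$). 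The assertion \eqref{taptap123} then follows by bounding the weight $|w-x_0|^{-1}\le r^{-1}$ on the annulus $r\le|w-x_0|\le 2r$, which turns the factor $\tfrac{7r^3}{3}$ into $\tfrac{7r^2}{3}$.

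I expect the main obstacle to be the first step: the fundamental theorem of calculus along $\mathcal{H}^1$-almost every ray and the identification of the endpoint value with $u(x_0)$. It is precisely here that the hypotheses that $u$ be continuous and be defined on the slightly larger ball $B(x_0,3r)$, rather than merely on the annulus, are used — the extra room beyond $S(x_0,2r)$ makes the radial restrictions well behaved right up to and past the outer sphere, and the continuity is what lets us take the base value to be the genuine pointwise value $u(x_0)$ rather than an average. Everything downstream of that point is Cauchy--Schwarz and Fubini, with the numerical constant $\tfrac{7}{3}$ entering only through $\int_r^{2r}\rho^2\,d\rho$.
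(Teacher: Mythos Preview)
Your approach---polar coordinates about $x_0$, the fundamental theorem of calculus along rays, Cauchy--Schwarz, and then integration in $\rho$ and $\theta$---is exactly the route the paper takes (the paper packages the spherical step as an inequality attributed to \cite{EG92} and then integrates in the radial variable $s$, which is your $\rho$). You are also right to flag that the argument naturally produces the right-hand side over the full ball $B(x_0,2r)$ rather than over the annulus $B(x_0,2r)\setminus B(x_0,r)$. In fact \eqref{taptap} as printed, with the annulus on the right, is false: take $u$ radial, affine on $B(x_0,r)$ and constant on $B(x_0,2r)\setminus B(x_0,r)$; then $\nabla u\equiv 0$ on the annulus while $u-u(x_0)\not\equiv 0$ there. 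The paper's intermediate inequality, which integrates $|\nabla u|^2|w-x_0|^{-1}$ only over $A\cap B(x_0,s)$, suffers from the same defect, so on this point you have been more careful than the paper.

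There is, however, a gap in your passage from \eqref{taptap} to \eqref{taptap123}. Having (correctly) arrived at a right-hand side over $B(x_0,2r)$, you then invoke the bound $|w-x_0|^{-1}\le r^{-1}$, which is valid only on the annulus $r\le |w-x_0|\le 2r$. On the inner ball $B(x_0,r)$ the weight $|w-x_0|^{-1}$ is unbounded, so replacing $\tfrac{7r^3}{3}\int_{B(x_0,2r)}|\nabla u|^2|w-x_0|^{-1}\,dw$ by $\tfrac{7r^2}{3}\int |\nabla u|^2\,dw$ is not justified; the same radial counterexample shows that \eqref{taptap123} with the annulus on the right is false as stated. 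What your argument genuinely proves is \eqref{taptap} with the domain of integration on the right enlarged to $B(x_0,2r)$; any unweighted version must either keep that enlarged domain or be rederived by a different route.
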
  

\begin{proof} Let $A:=B(x_0,2r)\setminus B(x_0,r)$.  First note that since $|w-x_0|^{-1}$ is bounded on $A$, and since $u$ is continuous by hypothesis, it is enough to prove \eqref{taptap} for $C^1$ functions and then use a standard approximation argument.      Following \cite{EG92}, we obtain, for $s >0$,
\begin{equation}\label{preint}\int_{A \cap \partial B(x_0,s)} |u(y)-u(x_0)|^2\,d\sch^1(y) \leq s^{2} \int_{A \cap B(x_0,s)} |\nabla u(w)|^2 |w-x_0|^{-1}\, dw.\end{equation}
For $s$ in the range $(r,2r)$ the right-hand side is bounded above by 
\[ s^2 \int_{A}  |\nabla u(w)|^2 |w-x_0|^{-1}\, dw.\]
Hence, by integrating \eqref{preint} over $(r,2r)$, inequality \eqref{taptap} follows.   Inequality \eqref{taptap123} is now immediate from the observation that $|w-x_0| \geq r$ if $w \in A$. \qed
\end{proof}

\begin{lemma}\label{kswlsc} The functional $u \mapsto F(u, \om',r')$ is sequentially weakly lower semicontinuous in $W^{1,2}(\om,\R^2)$ for fixed $\om'$ and $r'$.\end{lemma}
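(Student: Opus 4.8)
The plan is to fix a sequence $u^{(j)}\rightharpoonup u$ in $W^{1,2}(\om,\R^2)$ and to prove $\liminf_{j\to\infty} F(u^{(j)},\om',r')\ge F(u,\om',r')$, exploiting two structural features of $F$. Writing $b(x;x_0):=(x-x_0)/|x-x_0|$ we have $t(x,x_0,v)=\big(v(x)-v(x_0)\big)\cdot\cof\nabla v(x)\,b(x;x_0)$, and since in two dimensions the map $F\mapsto\cof F$ is \emph{linear}, for each fixed $x_0$ the integrand
\[(x,q,F)\ \longmapsto\ g\big(q\cdot\cof F\,b(x;x_0)\big)\]
is a nonnegative Carath\'{e}odory function that is convex in the matrix variable $F$ (being the composition of the convex function $g(s)=s^{-}$ with a map linear in $F$). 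Secondly, because $g\ge 0$, Fubini and Fatou are available in $x_0$: writing $F(v,\om',r')=\int_{\om'}\Phi(x_0;v)\,dx_0$ with $\Phi(x_0;v):=\int_{B(x_0,r')} g(t(x,x_0,v))\,dx$, it suffices to show $\Phi(x_0;\cdot)$ is lower semicontinuous along the relevant sequence for $\scl^{2}$-a.e.\ $x_0$ and then pass the liminf inside $\int_{\om'}(\cdot)\,dx_0$.

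First I would pass to a subsequence, not relabelled, realising $\liminf_j F(u^{(j)},\om',r')$, and then by Rellich--Kondrachov extract a further subsequence along which $u^{(j)}\to u$ strongly in $L^2(\om,\R^2)$ and $\scl^{2}$-a.e.\ in $\om$, while $\nabla u^{(j)}\rightharpoonup\nabla u$ in $L^2(\om,\R^{2\times2})$; since weak lower semicontinuity along the original sequence follows once the inequality is established for this further subsequence, I work with it henceforth. The a.e.\ convergence gives $u^{(j)}(x_0)\to u(x_0)$ for $\scl^{2}$-a.e.\ $x_0$ (values in the precise representatives), and for each such $x_0$ one has on $B(x_0,r')$ the strong $L^2$ convergence $u^{(j)}(\cdot)-u^{(j)}(x_0)\to u(\cdot)-u(x_0)$ together with $\nabla u^{(j)}\rightharpoonup\nabla u$ in $L^2(B(x_0,r'))$ (hence weakly in $L^1$, the ball being bounded).

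Next I would apply the classical lower semicontinuity theorem for integrals whose integrand is convex in the gradient and continuous in the lower-order variables (Ioffe's theorem; see e.g.\ Dacorogna's \emph{Direct Methods in the Calculus of Variations}) to the nonnegative Carath\'{e}odory integrand displayed above, with lower-order variable $q^{(j)}(x)=u^{(j)}(x)-u^{(j)}(x_0)$ converging strongly and $\nabla u^{(j)}$ converging weakly, obtaining $\liminf_j\Phi(x_0;u^{(j)})\ge\Phi(x_0;u)$ for $\scl^{2}$-a.e.\ $x_0\in\om'$. Finally, after noting that $(x_0,x)\mapsto g(t(x,x_0,u^{(j)}))\,\1_{B(x_0,r')}(x)$ is jointly measurable and nonnegative, so that $x_0\mapsto\Phi(x_0;u^{(j)})$ is measurable and nonnegative, Fatou's lemma yields
\[\liminf_{j\to\infty} F(u^{(j)},\om',r')=\liminf_{j\to\infty}\int_{\om'}\Phi(x_0;u^{(j)})\,dx_0\ \ge\ \int_{\om'}\liminf_{j\to\infty}\Phi(x_0;u^{(j)})\,dx_0\ \ge\ \int_{\om'}\Phi(x_0;u)\,dx_0=F(u,\om',r'),\]
which completes the proof.

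I expect the main obstacle to be the inner lower semicontinuity step: one must check that the lower-order argument $q^{(j)}=u^{(j)}(\cdot)-u^{(j)}(x_0)$ genuinely converges strongly in $L^2(B(x_0,r'))$ for a.e.\ $x_0$ --- this is exactly where both forms of compactness (strong $L^2$ convergence of $u^{(j)}$ and pointwise convergence of the constants $u^{(j)}(x_0)$) are used --- and that the convexity of the integrand in $F$ really holds, which rests on the linearity of $\cof$ and is thus special to two dimensions. The remaining points (joint measurability for Fubini and Fatou, the subsequence bookkeeping, and reducing weak $L^2$ to weak $L^1$ convergence on the bounded ball) are routine.
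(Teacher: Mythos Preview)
Your argument is correct and genuinely different from the paper's proof. The paper works on the product set $\hat{\om}=\bigcup_{x_0\in\om'}\{x_0\}\times B(x_0,r')$ directly: it shows that $t(\cdot,\cdot,u^{(j)})$ converges weak-$\ast$ in the sense of measures on $\hat{\om}$ to $t(\cdot,\cdot,u)$, by expanding $t$ bilinearly via $U^{(j)}:=u^{(j)}-u$ (this uses, implicitly, the same linearity of $\adj$ in two dimensions that you invoke) and pairing weak convergence of $\nabla U^{(j)}$ with strong $L^2$ convergence of $U^{(j)}-U^{(j)}(x_0)$; it then appeals to \cite[Proposition~A.3]{BM84} to pass the convex function $g$ under the $\liminf$. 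By contrast you slice in $x_0$: for a.e.\ $x_0$ you reduce to a standard De~Giorgi--Ioffe lower semicontinuity for integrands convex in the gradient, and then sweep over $x_0$ with Fatou. Your route is more self-contained (Ioffe's theorem is textbook, whereas the weak-$\ast$ measure step plus the cited proposition are a bit more specialised), and it isolates cleanly \emph{why} the result holds --- namely that in two dimensions $\cof$ is linear, making the inner integrand convex in $\nabla u$. The paper's route has the mild advantage of avoiding the subsequence-within-a-subsequence bookkeeping and the need to check pointwise convergence of $u^{(j)}(x_0)$, since it handles both variables at once.
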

\begin{proof}   Let 
\[\hat{\om}:= \bigcup_{x_0 \in \om'}(\{x_0\} \times B(x_0,r'))\]
and suppose $u^{(j)}$ converges weakly in $W^{1,2}(\om,\R^2)$ to $u$.   H\"{o}lder's inequality gives
\[\int_{\hat{\om}} |t(x,x_0,u^{(j)})|\,dx\,dx_0 \leq c|| u^{(j)}||_{1,2;\om}\]
for some constant $c$ depending only on $\om$. Since $u^{(j)}$ is by hypothesis weakly convergent in $W^{1,2}(\om,\R^2)$, the right-hand side is bounded uniformly in $j$, and hence, for some function $z(x,x_0)$, 
\[ t(x,x_0,u^{(j)}) \weakstar z(x,x_0)\]
in the sense of measures on $\hat{\om}$.   To show that $z(x,x_0)$ coincides almost everywhere with $t(x,x_{0},u)$ we let $f(x,x_0) \in C_c(\hat{\om})$ be arbitrary, define $U^{(j)}(x):=u^{(j)}(x)-u(x)$, and write
\begin{align*}\int_{\hat{\om}} t(x,x_0,u^{(j)}) f(x,x_0)\,dx\,dx_0 = & \int_{\hat{\om}} \adj \nabla U^{(j)}(x)  (U^{(j)}(x)-U^{(j)}(x_0))\cdot \frac{x-x_0}{|x-x_{0}|} f(x,x_0)\,dx \,dx_0 \\ &
+ \int_{\hat{\om}}\adj \nabla U^{(j)}(x) (u(x)-u(x_0)) \cdot \frac{x-x_0}{|x-x_{0}|} \,f(x,x_0)\,dx \,dx_0 \\ & + \int_{\hat{\om}} \adj \nabla u(x) (u^{(j)}(x)-u^{(j)}(x_0)) \cdot \frac{x-x_0}{|x-x_{0}|} \,f(x,x_0)\,dx\,dx_0. 
\end{align*}
The first integral converges to $0$ because $(x,x_0) \mapsto U^{(j)}(x)-U^{(j)}(x_0)$ converges strongly to $0$ in $L^{2}(\hat{\om})$; the second also converges to $0$, this time using the fact that $\nabla u^{(j)}(x)$ converges weakly to $0$ in $L^{2}(\om)$ together with the bounded convergence theorem.   The third integral tends to 
\[  \int_{\hat{\om}} \adj \nabla u(x) (u(x)-u(x_0)) \cdot \frac{x-x_0}{|x-x_{0}|} \,f(x,x_0)\,dx\,dx_0  \]
thanks to the strong convergence $u^{(j)} \to u$ in $L^{2}(\om)$.  It follows that $z(x,x_0)=t(x,x_0,u)$ a.e. in $\hat{\om}$.   We now apply \cite[Proposition A.3]{BM84} to the functional 
\[F(u,\om',r')=\int_{\hat{\om}} g(t(x,x_0,u))\,dx \,dx_0\]
and conclude that
\[\liminf_{j \to \infty} F(u^{(j)},\om',r') \geq F(u,\om',r').\] \qed
\end{proof}

\noindent
\textbf{Acknowledgement}\\[2mm]
The author would like to thank John Ball for his helpful comments on a preliminary version of this article.

\end{document}